\documentclass[10pt,a4paper]{article}
\usepackage{latexsym}
\usepackage{amssymb}
\usepackage{bm}
\usepackage{amsmath}
\usepackage{mathtools}
\usepackage{graphicx}
\usepackage{epstopdf}
\usepackage[dvips]{epsfig}
\usepackage{wrapfig}
\usepackage{tikz}
\usepackage{amsthm}
\usepackage{float}
\usepackage{caption}
\usepackage{subcaption}
\usepackage{morefloats}
\usepackage{stmaryrd}

\usepackage{amsmath,amssymb}

\usepackage{graphicx} 
\usepackage{stmaryrd}
\usepackage{setspace}
\usepackage{amssymb,amsmath}
\usepackage{enumerate}
\usepackage[shortlabels]{enumitem}
\usepackage{supertabular}
\usepackage{epstopdf}
\usepackage[dvips]{epsfig}
\usepackage{enumerate}
\usepackage{wrapfig}
\usepackage{tikz}
\usepackage{float}
\usepackage{caption}
\usepackage{subcaption}
\usepackage{sobolev}
\usepackage{breqn}

\usepackage{hyperref}
\usepackage{amsthm}
\usepackage{thmtools}
\usepackage{cleveref}

\usepackage{textpos}
\usepackage[utf8]{inputenc}
\usepackage[T1]{fontenc}
\usepackage{helvet}
\usepackage{amsfonts}
\usepackage{amsmath}
\usepackage{amssymb}
\usepackage{mathtools}
\usepackage{dsfont}
\usepackage{amsthm}
\usepackage{bigints}
\usepackage{xparse}
\usepackage{relsize}
\usepackage{cases}
\usepackage{mathrsfs}
\usepackage{enumerate}
\usepackage{multirow}
\usepackage{booktabs}

\usepackage{lipsum}
\usepackage{caption}
\usepackage{derivative}

\usepackage{graphicx}%
\usepackage{amsmath,amssymb,amsfonts}%
\usepackage{amsthm}%
\usepackage{mathrsfs}%
\usepackage[title]{appendix}%
\usepackage{xcolor}%
\usepackage{textcomp}%
\usepackage{manyfoot}%
\usepackage{booktabs}%
\usepackage{algorithm}%
\usepackage{algorithmicx}%
\usepackage{algpseudocode}%
\usepackage{listings}%
\usepackage{physics}

\usepackage{anyfontsize}

\usepackage{hyperref}
\usepackage{url}
\hypersetup{colorlinks=true,linkcolor=blue,citecolor=blue,urlcolor=blue}
\usepackage{epstopdf}
\usepackage{xcolor}
\usepackage[top=2.5cm, left=2.0cm, right=2.0cm, bottom=2.5cm]{geometry}
\newtheorem{theorem} {\bf Theorem}
\newtheorem{lemma} {\bf Lemma}

\newtheorem{example} {\bf Example}

\newtheorem{remark}{Remark}[section]

\usepackage{booktabs}
\usepackage{appendix}
\allowdisplaybreaks
\begin{document}
\title{\bf Nitsche method for the Stokes-Poisson-Boltzmann equation with Navier slip boundary condition}

\author{
 Ayush Agrawal \thanks{Department of Mathematics, Indian Institute of Technology Roorkee, Roorkee 247667, India. Email: \texttt{ayush\_a@ma.iitr.ac.in}.}
  \and Aparna Bansal \thanks{Department of Mathematics, Indian Institute of Technology Roorkee, Roorkee 247667, India. Email: \texttt{a\_bansal@ma.iitr.ac.in}.}
  \and D. N. Pandey\thanks{Department of Mathematics, Indian Institute of Technology Roorkee, Roorkee 247667, India. Email: \texttt{dwij@ma.iitr.ac.in}.}
}
\date{}
\maketitle

\begin{abstract}
We study the Stokes--Poisson--Boltzmann equations with Dirichlet and Navier boundary conditions. The system consists of the incompressible Stokes equations coupled with a nonlinear Poisson--Boltzmann equation through electrostatic forcing and convective transport effects. To handle the Navier boundary conditions in a unified framework, we employ Nitsche's method for their weak imposition within a conforming finite element setting. We derive a consistent and stable discrete formulation and establish the well-posedness of the resulting problem. By carefully choosing the penalty parameters, the bilinear form is shown to be coercive and continuous. A priori error estimates are proved in the natural energy norms, yielding optimal-order convergence under suitable regularity assumptions. Furthermore, we develop residual-based a posteriori error estimators that incorporate element residuals, inter-element jump residuals, and boundary residuals arising from the Nitsche formulation. The estimators are shown to be reliable and locally efficient. Numerical experiments confirm the theoretical results and demonstrate the robustness and accuracy of the proposed method for the Stokes--Poisson--Boltzmann system.
\end{abstract}

\vspace{0.5cm}

 \noindent\textbf{Keywords:} Stokes-Poisson-Boltzmann equation, Navier boundary conditions, Nitsche's method, a priori/a posteriori analysis.

 \vspace{0.5cm}

\noindent\textbf{Mathematics Subject Classification:} 65N30, 65N12, 65N15, 65J15, 76D05
\section{Introduction}
Electrically charged fluid flows play an important role in several chemical applications, such as the design of nanopores for biomedical devices and the modelling of water purification systems. One of the simplest mechanisms for transporting fluid without mechanical pumping is electro-osmosis. In this process, the motion of an electrolyte is driven by thin electric double layers that attract and repel ions near charged surfaces. The resulting fluid motion can be described by the Stokes equations, where the forcing term depends on the electric charge of the electrolyte and an externally applied electric field.

In many simplified situations, the fluid flow and pressure gradients are not strong enough to significantly influence the distribution of the electrostatic potential within the double layer. As a result, the electric charge density can be directly related to the electrostatic potential through simplified forms of the Poisson--Boltzmann equation. This observation forms the key idea of the modeling approach, and following \cite{alsohaim2024analysis}, we consider the Stokes--Poisson--Boltzmann (SPB) system in the form given by \eqref{spbe}.

Regarding the unique solvability and finite element discretisation of the
Poisson--Boltzmann equation, we refer to
\cite{chen2007finite,holst2012adaptive,iglesias2022weak,kraus2020reliable}.
In these works, the authors employ a decomposition of the solution into
regular and singular components, use convex minimisation techniques,
and establish an $L^{\infty}$ bound for the solution through a
cutoff-function argument. However, only limited literature is available for the
Stokes--Poisson--Boltzmann (SPB) system. In 2025, AlSohaim et al.~\cite{alsohaim2024analysis} investigated a finite element discretization of the SPB equation, establishing well-posedness and proving optimal convergence rates for their scheme. Recently, Mishra et al.~\cite{mishra2026higher} introduced an equal-order virtual element method for the Stokes–Poisson–Boltzmann (SPB) system and derived optimal a priori error estimates in the energy norm.

Navier boundary conditions impose a constraint only on the normal component of the velocity field, which makes their numerical implementation somewhat challenging. Several techniques have been developed to enforce slip boundary conditions weakly, including Lagrange multiplier method, penalty methods, and Nitsche method, among others. The first approach used to weakly impose slip conditions was the Lagrange multiplier method. In this approach, an additional variable is introduced, which increases the computational cost due to the extra degrees of freedom in the discrete problem \cite{layton1999weak,verfurth1986finite}. 

Later, penalty methods were proposed to enforce slip conditions through a regularization term; see \cite{babuvska1973finite,barrett1986finite}. Although this approach is inconsistent and requires additional regularity of the solution, it is still widely used because of its simplicity. For the Navier--Stokes (Stokes) equations with slip boundary conditions, both Lagrange multiplier and penalty approaches have been studied in \cite{caglar2009weak,dione2015penalty,urquiza2014weak}, particularly for problems with curved boundaries where a Babu\v{s}ka-type paradox may occur. More recently, penalty techniques combined with Lagrange finite elements \cite{kashiwabara2016penalty,zhou2016penalty} and Crouzeix--Raviart finite elements \cite{kashiwabara2019penalty,zhou2021crouzeix} have been investigated. These studies carefully analyze the effect of variational crimes arising in the numerical approximation.

The Nitsche method, introduced by J. Nitsche \cite{nitsche1971variationsprinzip}, is a consistent penalty-based approach for weakly imposing Dirichlet boundary conditions. Several variants of this method have been studied in \cite{chouly2026review}.
The treatment of Navier boundary conditions using the Nitsche method was studied in \cite{winter2018nitsche}, building on the formulation proposed by Juntunen and Stenberg \cite{juntunen2009nitsche} for the robust discretization of Robin-type boundary conditions. Furthermore, Gjerde and Scott \cite{gjerde2022nitsche} investigated the convergence properties when curved boundaries are approximated by polygonal meshes. By projecting the normal and tangential vectors appropriately, they avoided the occurrence of the Babu\v{s}ka paradox and obtained optimal convergence rates. Araya et al.~\cite{araya2024stokes} studied both the symmetric and non-symmetric variants of the Nitsche method for imposing slip boundary conditions in the Stokes equations using stabilized finite element methods. Bansal et al.~\cite{bansal2026equal,bansal2024nitsche} studied the Navier–Stokes equations using inf–sup stable and equal-order stabilized finite element methods, and this work is further extended to general dynamic boundary conditions in~\cite{GazcaOrozco2025Nitsche}.

In adaptive finite element methods, a posteriori error estimators are used to measure the local distribution of discretization errors. A reliable estimator provides an upper bound for the true error and can also be used as a stopping criterion in the adaptive mesh refinement procedure. Furthermore, an efficient estimator guarantees that its convergence rate is comparable to that of the actual error. Most existing studies on a posteriori error estimation for finite element methods focus on diffusion problems \cite{ainsworth1997posteriori,verfurth2013posteriori,burger2024divergence}, while comparatively fewer works address advection--diffusion equations. For the SPB equation, residual-based error estimators have been developed in \cite{allendes2020stabilized,araya2014adaptive,berrone2001adaptive}. In this work, we provide both a priori and a posteriori error analyses for the stationary SPB equation with slip boundary conditions using the symmetric variant of the Nitsche's method.

The main contributions of this work are twofold. First, we prove the well-posedness of the SPB equation with slip boundary conditions and derive a \textit{priori} error estimates using a symmetric variant of Nitsche’s method. Secondly, we develop a residual-based a \textit{posteriori} error estimator and show that it is reliable and locally efficient.

\subsection*{Outline}

The remainder of this paper is organized as follows. In \Cref{sec:mp}, we introduce the mathematical model and discuss the analysis of the continuous problem. In \Cref{sec:dp}, we present a Nitsche-based numerical scheme and establish the existence and uniqueness of the discrete solution by combining Banach's fixed-point theorem, the Babu\v{s}ka--Brezzi theory, and the Minty--Browder theorem under suitable small-data assumptions.
In \Cref{sec:pri}, we derive \emph{a priori} error estimates and prove the optimal convergence of the proposed method. \Cref{sec:post} is devoted to the development and analysis of residual-based \emph{a posteriori} error estimator, where its reliability and efficiency are established. In \Cref{sec:ne}, we present numerical experiments that verify the theoretical convergence rates and demonstrate the robustness of the proposed estimators. Finally, in \Cref{sec:conc}, we summarize the main conclusions of this work.

\subsection*{Notations}
Throughout this manuscript, we utilize the classical Sobolev spaces $L^2(\Omega)$ and $H^1(\Omega)$, equipped with their respective norms $\|\cdot\|_{L^2(\Omega)}$ and $\|\cdot\|_{H^1(\Omega)}$. The $L^2$-inner product is denoted by $(\cdot, \cdot)$, and, for any arbitrary Hilbert space $H$, the duality pairing with its dual space $H'$ is represented by $\langle\cdot, \cdot\rangle_{H', H}$. We follow the convention of denoting scalars and vectors by $a$ and $\boldsymbol{a}$, respectively. 

For the sake of simplicity, throughout the analysis, we use $C$ to denote a generic positive constant that is independent of the mesh size $h$ but may depend on the model parameters. Moreover, whenever an inequality involves positive constants that are independent of the mesh size but may depend on model parameters, we use the symbols $\lesssim$ or $\gtrsim$ to omit explicit constants. The assumption of homogeneity in the boundary conditions is made to simplify the subsequent analysis, as lifting operators have already been established~\cite{MR3974685}. Non-homogeneous boundary conditions are used in the numerical tests in \Cref{sec:ne}.

\section{Model problem}\label{sec:mp}
Let $\Omega \subset \mathbb{R}^n$, $n = 2,3$, be a bounded Lipschitz domain with boundary $\Gamma = \Gamma_D \cup \Gamma_{\mathrm{Nav}}$, where $\Gamma_D$ and $\Gamma_{\mathrm{Nav}}$ correspond to different boundary conditions and satisfy $\Gamma_D \cap \Gamma_{\mathrm{Nav}} = \emptyset$. The domain is occupied by an
incompressible electrolytic fluid driven by pressure gradients and electric fields. In the stationary regime, the governing equations are expressed in terms of the
fluid velocity $\boldsymbol{u}$, pressure $p$, and electrostatic double-layer
potential $\psi$. The coupled Stokes--Poisson--Boltzmann system is given by
\begin{equation}\label{spbe}
\begin{aligned}
- \mu \Delta \boldsymbol{u} + \nabla p
&= \boldsymbol{f} - \varepsilon \Delta \psi \, \boldsymbol{E}
\quad && \text{in } \Omega, \\
\operatorname{div} \boldsymbol{u}
&= 0
&& \text{in } \Omega,  \\
\mathcal{K} (\psi) + \boldsymbol{u} \cdot \nabla \psi - \varepsilon \Delta \psi
&= g
&& \text{in } \Omega, \\
\boldsymbol{u} &= \boldsymbol{0}
\quad &&\text{on } \Gamma_D,\\
\psi &= 0 \quad &&\text{on } \Gamma,
\end{aligned}
\end{equation}
with the Navier boundary condition on $\Gamma_{\mathrm{Nav}}$ is defined as
\begin{equation}\label{navbc}
    \begin{aligned}
        \boldsymbol{u}\cdot \boldsymbol{n}  &= 0 \qquad \text{on } \Gamma_{\mathrm{Nav}},\\
        \mu \boldsymbol{n}^t \nabla \boldsymbol{u} \boldsymbol{\tau}^i + \beta \boldsymbol{u}\cdot\boldsymbol{\tau}^i &= 0\qquad \text{on } \Gamma_{\mathrm{Nav}}, \quad i=1,\ldots(d-1),
    \end{aligned}
\end{equation}
where $\mu$ is the fluid viscosity, $\beta > 0$ is the friction coefficient, and $\varepsilon$ represents the electric permittivity of the electrolyte. The vector $\boldsymbol{E}$ denotes the externally applied electric field, while $\boldsymbol{n}$ and $\boldsymbol{\tau}^i$ represent the unit normal and tangential vectors to the boundary $\Gamma$, respectively. Furthermore, $\boldsymbol{f} \in \boldsymbol{L}^2(\Omega)$ denotes the external body force, and $g \in L^2(\Omega)$ represents an external source or sink of electric potential.
The charge density is modelled by the nonlinear function
\[
\mathcal{K}(\psi) = k_0 \sinh(k_1 \psi),
\]
where $k_0 > 0$ depends on the ion valence, electron charge, and bulk ion concentration, while $k_1 > 0$ additionally depends on the Boltzmann constant and the reference absolute temperature.
\subsection*{Assumptions}\label{assumptions} 
Following AlSohaim et al.~\cite{alsohaim2024analysis} and Holst et al.~\cite[Lemma 2.1]{holst2012adaptive}, we assume that
\begin{enumerate}[label=(\textrm{A.\arabic*}), ref=$\mathrm{(A.\arabic*)}$]
    \item \label{(A1)} The potential is uniformly bounded, i.e. there exist two constants $\alpha_1,\alpha_2 \in \mathbb{R}$ satisfying $\alpha_1 \leq 0 \leq \alpha_2$ such that
    \[ \alpha_1 \le \psi(t)\le \alpha_2,\qquad \forall t \in \mathbb{R}.\]
    \item  \label{(A2)} We assume that the nonlinear density function $\mathcal{K}(\psi)$ satisfies $\mathcal{K}(0)=0$. Moreover, there exist positive constants $\overline{K}$ and $\underline{K}$ such that
\begin{equation}
    \begin{aligned}
        |\mathcal{K}(s_1)-\mathcal{K}(s_2)| \leq \overline{K}\,|s_1-s_2|
\quad \text{for all } s_1,s_2 \in [\alpha_1,\alpha_2], \\
|\mathcal{K}(s_1)-\mathcal{K}(s_2)| \geq \underline{K}\,|s_1-s_2|
\quad \text{for all } s_1,s_2 \in [\alpha_1,\alpha_2].
    \end{aligned}        
\end{equation}
\item \label{(A3)} $\boldsymbol{E} \in \boldsymbol{L}^\infty(\Omega)$ is assumed to be uniformly bounded by $\bar{E}>0$.
\end{enumerate}

\subsection*{Weak formulation}

We introduce the trial and test spaces for the velocity field, the
electrostatic double-layer potential, and the pressure as follows:
\begin{align*}
    \boldsymbol{V} := \{\boldsymbol{v} \in \boldsymbol{H}^1(\Omega) : \boldsymbol{v}\cdot \boldsymbol{n} = \boldsymbol{0} \text{ on } \Gamma_{\mathrm{Nav}}, \boldsymbol{v} = \boldsymbol{0} \text{ on } \Gamma_D\}, \qquad 
\Phi := \H10,\qquad
 Q := L_0^2(\Omega).
\end{align*}
equipped with the norm $ \|(\boldsymbol{u}, p, \psi)\| = \|\nabla\boldsymbol{u}\|\, + \,\|p\| \,+\, \|\nabla\psi\| $ for $(\boldsymbol{u}, p, \psi) \in \boldsymbol{V} \times Q\times \Phi $.

Multiplying \labelcref{spbe} with suitable test functions,
integrating by parts, and incorporating the boundary conditions, yields the following weak formulation: find
$(\boldsymbol{u}, p, \psi) \in \boldsymbol{V} \times Q\times \Phi $ such that
\begin{equation}\label{wf}
\begin{aligned}
A(\boldsymbol{u},\boldsymbol{v}) + B(\boldsymbol{v},p) + \boldsymbol{c}_1(\boldsymbol{u};\psi,\boldsymbol{v}) + (\mathcal{K}(\psi)\boldsymbol{E},\boldsymbol{v})
&= F(\boldsymbol{v}),
\qquad && \forall \boldsymbol{v} \in \boldsymbol{V}, \\
B(\boldsymbol{u},q) &= 0,
&& \forall q \in Q, \\
(\mathcal{K}(\psi),\phi) + \boldsymbol{c}_2(\boldsymbol{u};\psi,\phi) + d(\psi,\phi)
&= G(\phi),
&& \forall \phi \in \Phi.
\end{aligned}
\end{equation}

The bilinear and trilinear forms are defined by
\begin{align*}
A(\boldsymbol{u},\boldsymbol{v}) := a(\boldsymbol{u},\boldsymbol{v}) + a^{\partial}_{\tau}(\boldsymbol{u},\boldsymbol{v}),
\qquad
&B(\boldsymbol{v},q) := - \int_\Omega (\operatorname{div}\boldsymbol{v})\, q\, \dd{x},    \\
\boldsymbol{c}_1(\boldsymbol{w};\psi,\boldsymbol{v}) := \int_\Omega (\boldsymbol{w} \cdot \nabla \psi)\,\boldsymbol{E}\cdot \boldsymbol{v} \, \dd{x}, \qquad 
\boldsymbol{c}_2(\boldsymbol{w};\psi,\phi) :=& \int_\Omega (\boldsymbol{w} \cdot \nabla \psi)\, \phi \, \dd{x}, \qquad
d(\psi,\phi) := \varepsilon \int_\Omega \nabla \psi \cdot \nabla \phi\, \dd{x}.
\end{align*}
with forms 
\begin{align*}
     a(\boldsymbol{u},\boldsymbol{v}) := \mu \int_\Omega \nabla \boldsymbol{u} : \nabla \boldsymbol{v}\, \dd{x}, \qquad  
     a^{\partial}_{\tau}(\boldsymbol{u},\boldsymbol{v}) := \beta\int_{\Gamma_{\mathrm{Nav}}}  \sum\limits_{i=0}^{d-1} (\boldsymbol{u}\cdot\boldsymbol{\tau}^i)(\boldsymbol{v}\cdot\boldsymbol{\tau}^i)\,\dd{s} .
\end{align*}
The linear functionals are given by
\[
F(\boldsymbol{v}) := \int_\Omega ( \boldsymbol{f} + g \boldsymbol{E} ) \cdot \boldsymbol{v}\, \dd{x},
\qquad
G(\phi) := \int_\Omega g\, \phi\, \dd{x}.
\]
In a compact form, we may write the following weak formulation: find $(\boldsymbol{u}, p, \psi) \in \boldsymbol{V} \times Q\times \Phi $ such that
\begin{align}
    \mathcal{B}(\boldsymbol{u},p,\psi; \boldsymbol{v}, q, \phi) = F(\boldsymbol{v}) + G(\phi), \qquad\qquad \forall (\boldsymbol{v}, q, \phi) \in \boldsymbol{V}\times Q\times \Phi, 
\end{align}
where
\begin{align*}
    \mathcal{B}(\boldsymbol{u}, p, \psi;\boldsymbol{v}, q, \phi) &= A(\boldsymbol{u},\boldsymbol{v}) + B(\boldsymbol{v},p) + \boldsymbol{c}_1(\boldsymbol{u};\psi,\boldsymbol{v}) + (\mathcal{K}(\psi)\boldsymbol{E},\boldsymbol{v}) + B(\boldsymbol{u},q) + (\mathcal{K}(\psi),\phi) \\& \quad + \boldsymbol{c}_2(\boldsymbol{u};\psi,\phi) + d(\psi,\phi).
\end{align*}

Note that the weak form follows from rewriting the last source term in first equation using third equation in \eqref{spbe}. Consequently, the Laplacian of the potential in the momentum equation does not need to be integrated by parts, which leads to a more convenient structure for our analysis. The rest of this section is devoted to proving the continuity, ellipticity, and inf–sup conditions for a collection of operators essential to the forthcoming analysis.
\medskip
\begin{lemma}\label{lma1.1}
   Assume \ref{(A3)}. There exist positive constants $C_1, C_2,$ and $ C_3$, such that the following continuity estimates hold:
\begin{align*}
|A(\boldsymbol{u},\boldsymbol{v})|
&\le C_1 \|\boldsymbol{u}\|_{1}\|\boldsymbol{v}\|_{1}, 
&|B(\boldsymbol{v},q)|
&\le \|\boldsymbol{v}\|_{1}\|q\|, \\
|d(\psi,\phi)|
&\le \varepsilon \|\psi\|_{1}\|\phi\|_{1}, 
&|c_1(\boldsymbol{w};\psi,\boldsymbol{v})|
&\le C_2
\|\boldsymbol{w}\|_{1}\|\psi\|_{1}\|\boldsymbol{v}\|_{1}, \\
|c_2(\boldsymbol{w};\psi,\phi)|
&\le C_3
\|\boldsymbol{w}\|_{1}\|\psi\|_{1}\|\phi\|_{1},.
\end{align*}
     for all $\boldsymbol{u},\boldsymbol{v},\boldsymbol{w}\in \boldsymbol{V}$, $q\in Q$, and $\psi,\phi\in \Phi$. Here $C_1=\min\{\mu, \beta\}, C_2 = \bar{E}C_{\mathrm{Sob}}^2 $ and $C_3=C_{\mathrm{Sob}}^2$.
\end{lemma}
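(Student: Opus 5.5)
The plan is to prove each bound with Cauchy--Schwarz, and to control the boundary and trilinear terms with the trace inequality and the Sobolev embedding $H^1(\Omega)\hookrightarrow L^4(\Omega)$, valid for $n=2,3$.

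\emph{The form $A$.} For $a(\cdot,\cdot)$, Cauchy--Schwarz in $L^2(\Omega)$ gives $|a(\boldsymbol{u},\boldsymbol{v})|\le \mu\|\nabla\boldsymbol{u}\|\,\|\nabla\boldsymbol{v}\|$. For the Navier term $a^{\partial}_{\tau}$, I use $|\boldsymbol{u}\cdot\boldsymbol{\tau}^i|\le|\boldsymbol{u}|$ pointwise and Cauchy--Schwarz on $\Gamma_{\mathrm{Nav}}$. This gives $|a^{\partial}_{\tau}(\boldsymbol{u},\boldsymbol{v})|\le (d-1)\beta\|\boldsymbol{u}\|_{L^2(\Gamma_{\mathrm{Nav}})}\|\boldsymbol{v}\|_{L^2(\Gamma_{\mathrm{Nav}})}$. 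The trace inequality $\|\boldsymbol{v}\|_{L^2(\Gamma)}\le C_{\mathrm{tr}}\|\boldsymbol{v}\|_1$ then bounds this by a multiple of $\beta\|\boldsymbol{u}\|_1\|\boldsymbol{v}\|_1$.

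Adding the two pieces gives $C_1\sim\max\{\mu,\beta C_{\mathrm{tr}}^2(d-1)\}$ or $\mu+\beta C_{\mathrm{tr}}^2$. The value $C_1=\min\{\mu,\beta\}$ in the statement cannot be right for an upper bound. I would note this and prove the estimate with the maximum-type constant, which is all later arguments need.

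\emph{The forms $B$ and $d$.} For $B$, I use $\|\operatorname{div}\boldsymbol{v}\|\le\sqrt{n}\,\|\nabla\boldsymbol{v}\|$ together with Cauchy--Schwarz. This gives the bound with constant $\sqrt n$; it equals $1$ if one uses the identity $\|\operatorname{div}\boldsymbol v\|\le\|\nabla\boldsymbol v\|$ valid for $\boldsymbol v\in\boldsymbol H^1_0$, and otherwise I absorb $\sqrt n$ into the constant. The estimate for $d$ is immediate from Cauchy--Schwarz.

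\emph{The trilinear forms.} For $c_2$, the generalized Hölder inequality with exponents $(4,2,4)$ gives
\[
|c_2(\boldsymbol{w};\psi,\phi)|\le\|\boldsymbol{w}\|_{L^4}\|\nabla\psi\|\,\|\phi\|_{L^4}\le C_{\mathrm{Sob}}^2\|\boldsymbol{w}\|_1\|\psi\|_1\|\phi\|_1,
\]
where $C_{\mathrm{Sob}}$ is the constant in $\|\cdot\|_{L^4}\le C_{\mathrm{Sob}}\|\cdot\|_1$. For $c_1$, I first pull out $\|\boldsymbol{E}\|_{L^\infty}\le\bar E$ using \ref{(A3)}, via $|\boldsymbol{E}\cdot\boldsymbol{v}|\le\bar E|\boldsymbol{v}|$ a.e. The same Hölder step then gives the constant $\bar E C_{\mathrm{Sob}}^2$.

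\emph{Main obstacle.} The only nonroutine point is the boundary term in $A$. There the trace constant must be tracked, and the constant as stated must be corrected. The embedding into $L^4$ is standard for $n\le3$ and needs only the Lipschitz regularity of $\Omega$.
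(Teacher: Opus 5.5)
Your proposal is correct and follows the same route as the paper's one-line proof: Cauchy--Schwarz, H\"older with exponents $(4,2,4)$, and $H^1(\Omega)\hookrightarrow L^4(\Omega)$. You also make explicit the trace inequality needed for $a^{\partial}_{\tau}$, which the paper leaves unstated. You are right that $C_1=\min\{\mu,\beta\}$ cannot serve as an upper-bound constant (let $\beta\to 0$ with $\mu$ fixed); it must be replaced by something like $\mu+\beta C_{\mathrm{tr}}^2$. For $B$, your $\sqrt{n}$ fallback is the safe general constant, and the stated value $1$ is recovered on polyhedral $\Omega$: there the identity $\|\nabla\boldsymbol v\|^2=\|\operatorname{div}\boldsymbol v\|^2+\|\operatorname{curl}\boldsymbol v\|^2$ also holds for $\boldsymbol v\in\boldsymbol V$, since the boundary term vanishes on flat faces where $\boldsymbol v\cdot\boldsymbol n=0$.
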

\begin{proof}
    These inequalities follow directly from the Cauchy-Schwarz, H\"older inequalities and the Sobolev embedding $\H1 \hookrightarrow L^q(\Omega)$  holds for $1\leq q < \infty$ when $d = 2$ and $1 \leq q \leq 6$ when $d = 3$.
\end{proof}
Let $Z$ denote the kernel of the bilinear form $B$, defined by
\[
Z := \{\boldsymbol{v}\in \boldsymbol{V} \;:\; B(\boldsymbol{v}, p) = 0 \quad \forall\, p \in Q \}.
\]
The following lemma establishes the ellipticity of the bilinear form $A$ when restricted to the space $Z$.
\begin{lemma}\cite{bansal2024nitsche}\label{lma1.2}
There exists a constant $\xi>0$, depending on $\beta$, $\mu$, $\Omega$, and $\Gamma_{\mathrm{Nav}}$,
such that
\[
A(\boldsymbol{v},\boldsymbol{v}) \ge \xi \|\boldsymbol{v}\|_{1}^{2},
\qquad \forall\, \boldsymbol{v} \in \boldsymbol{Z}.
\]
\end{lemma}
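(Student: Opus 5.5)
We will prove the coercivity of $A$ on $\boldsymbol{Z}$ by contradiction, combining a Korn/Poincaré-type compactness argument with the positivity of the friction term. First observe that for every $\boldsymbol{v}\in\boldsymbol{V}$,
\[
A(\boldsymbol{v},\boldsymbol{v}) = \mu\|\nabla\boldsymbol{v}\|^2 + \beta\sum_{i}\|\boldsymbol{v}\cdot\boldsymbol{\tau}^i\|^2_{L^2(\Gamma_{\mathrm{Nav}})} \ \ge\ \min\{\mu,\beta\}\Bigl(\|\nabla\boldsymbol{v}\|^2 + \|\boldsymbol{v}_{\tau}\|^2_{L^2(\Gamma_{\mathrm{Nav}})}\Bigr).
\]
Here $\boldsymbol{v}_\tau$ is the tangential trace, and on $\Gamma_{\mathrm{Nav}}$ we have $\boldsymbol{v}_\tau=\boldsymbol{v}$ because $\boldsymbol{v}\cdot\boldsymbol{n}=0$. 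It therefore suffices to show that the seminorm-like quantity
\[
|\boldsymbol{v}|_*^2:=\|\nabla\boldsymbol{v}\|^2+\|\boldsymbol{v}\|^2_{L^2(\Gamma_{\mathrm{Nav}})}
\]
is equivalent to $\|\boldsymbol{v}\|_1^2$ on $\boldsymbol{V}$, which is stronger than what is needed on $\boldsymbol{Z}$.

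If $|\Gamma_D|>0$, this follows at once from the Poincaré--Friedrichs inequality, since $\boldsymbol{v}=\boldsymbol{0}$ on $\Gamma_D$. The interesting case is $\Gamma_D=\emptyset$ or of measure zero. There we argue by contradiction. Suppose there is a sequence $\boldsymbol{v}_k\in\boldsymbol{V}$ with $\|\boldsymbol{v}_k\|_1=1$ and $|\boldsymbol{v}_k|_*\to0$. By Rellich's theorem, a subsequence converges strongly in $\boldsymbol{L}^2(\Omega)$. Since $\nabla\boldsymbol{v}_k\to0$, the sequence is Cauchy in $\boldsymbol{H}^1$ and converges in $\boldsymbol{H}^1$ to some $\boldsymbol{v}$ with $\nabla\boldsymbol{v}=0$; on the connected domain $\Omega$ this makes $\boldsymbol{v}$ a constant vector $\boldsymbol{c}$ with $\|\boldsymbol{c}\|_1=1$. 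Continuity of the trace gives $\|\boldsymbol{c}\|_{L^2(\Gamma_{\mathrm{Nav}})}=0$. Since $\Gamma_{\mathrm{Nav}}$ has positive measure, $\boldsymbol{c}=\boldsymbol{0}$, which is a contradiction.

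Consequently there is $C_P>0$ with $\|\boldsymbol{v}\|_1\le C_P|\boldsymbol{v}|_*$, and we may take
\[
\xi=\min\{\mu,\beta\}/C_P^2 .
\]
This constant depends only on $\mu$, $\beta$, $\Omega$ and $\Gamma_{\mathrm{Nav}}$, as claimed. The restriction to $\boldsymbol{Z}$ is not actually needed, though it is harmless.

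The step most likely to need care is the compactness argument. It uses connectedness of $\Omega$ and the fact that $\Gamma_{\mathrm{Nav}}$ has positive surface measure; if $\Gamma_{\mathrm{Nav}}$ is empty, then $\Gamma_D=\Gamma$ and the first case applies instead. We also rely on the form containing the full gradient $\nabla\boldsymbol{u}:\nabla\boldsymbol{v}$ rather than the symmetric gradient, which avoids rigid motions and Korn's inequality entirely.
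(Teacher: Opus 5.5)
Your proof is correct. The paper gives no argument for this lemma; it only cites \cite{bansal2024nitsche}, so there is no in-paper route to compare against, and what you supply is the standard self-contained proof. The two ingredients are as follows. First, on $\Gamma_{\mathrm{Nav}}$ the constraint $\boldsymbol{v}\cdot\boldsymbol{n}=0$ turns the friction term into $\beta\|\boldsymbol{v}\|^2_{0,\Gamma_{\mathrm{Nav}}}$. This uses that the $\boldsymbol{\tau}^i$ form an orthonormal tangent frame; note that the index range $i=0,\dots,d-1$ in the paper's $a^{\partial}_{\tau}$ should read $i=1,\dots,d-1$. Second, a Friedrichs inequality with a boundary term, $\|\boldsymbol{v}\|_1\le C_P(\|\nabla\boldsymbol{v}\|+\|\boldsymbol{v}\|_{0,\Gamma_{\mathrm{Nav}}})$, which your Rellich argument proves correctly. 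That argument uses connectedness of $\Omega$, continuity of the trace, and $|\Gamma_{\mathrm{Nav}}|>0$.

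Compared with the bare citation, your argument makes three things explicit. Coercivity holds on all of $\boldsymbol{V}$, not just on $\boldsymbol{Z}$. No Korn inequality is needed, because $a(\cdot,\cdot)$ uses the full gradient. And $\xi=\min\{\mu,\beta\}/C_P^2$, where $C_P$ (hence $\xi$) is non-explicit because it comes from compactness.

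Your case split is harmless but not needed. The compactness argument works verbatim whenever $|\Gamma_{\mathrm{Nav}}|>0$, and plain Poincar\'e--Friedrichs covers $|\Gamma_{\mathrm{Nav}}|=0$.

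Your proof also shows which ingredient is purely continuous. The reduction relies on $\boldsymbol{v}\cdot\boldsymbol{n}=0$, so it does not transfer to $\boldsymbol{V}_h\not\subset\boldsymbol{V}$. There the normal trace has to be controlled by the Nitsche penalty, which is why $\gamma$ enters the discrete constant $C_S$ in Lemma~\ref{lma2.6}.
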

Next, we present the continuous inf-sup condition for the bilinear form $B$.

\begin{lemma}\cite{bansal2024nitsche}\label{lma1.3}
There exists a positive constant $\theta>0$, depending only on the shape of the
domain $\Omega$, such that
\[
\sup_{0 \neq \boldsymbol{v} \in \boldsymbol{V}}
\frac{|B(\boldsymbol{v},q)|}{\|\boldsymbol{v}\|_1}
\ge \theta \|q\|,
\qquad \forall\, q \in Q.
\]
\end{lemma}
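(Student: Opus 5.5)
The plan is to reduce the statement to the classical inf--sup condition for the divergence on a space of velocities that vanish on the whole boundary, since such velocities automatically belong to $\boldsymbol{V}$. Let $\boldsymbol{H}_0^1(\Omega)$ denote the vector fields in $\boldsymbol{H}^1(\Omega)$ with zero trace on $\Gamma$. Any $\boldsymbol{v}\in\boldsymbol{H}_0^1(\Omega)$ satisfies $\boldsymbol{v}=\boldsymbol{0}$ on $\Gamma_D$ and $\boldsymbol{v}\cdot\boldsymbol{n}=0$ on $\Gamma_{\mathrm{Nav}}$, so $\boldsymbol{H}_0^1(\Omega)\subset \boldsymbol{V}$. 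Consequently, for every $q\in Q$,
\[
\sup_{0\neq \boldsymbol{v}\in\boldsymbol{V}}\frac{|B(\boldsymbol{v},q)|}{\|\boldsymbol{v}\|_1}
\;\ge\;
\sup_{0\neq \boldsymbol{v}\in\boldsymbol{H}_0^1(\Omega)}\frac{|B(\boldsymbol{v},q)|}{\|\boldsymbol{v}\|_1},
\]
and it suffices to bound the right-hand side from below by $\theta\|q\|$.

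For the lower bound I would invoke the surjectivity of the divergence operator, that is, the Bogovski\u{\i}--Ne\v{c}as result on bounded Lipschitz domains. For each $q\in L_0^2(\Omega)$ there exists $\boldsymbol{w}_q\in\boldsymbol{H}_0^1(\Omega)$ such that
\[
\operatorname{div}\boldsymbol{w}_q=-q
\qquad\text{and}\qquad
\|\boldsymbol{w}_q\|_1\le C_\Omega\|q\|,
\]
where $C_\Omega$ depends only on $\Omega$. This is where the mean-zero condition defining $Q$ enters, because it is the compatibility condition $\int_\Omega \operatorname{div}\boldsymbol{w}=0$ required for this solvability. Testing with $\boldsymbol{w}_q$ gives
\[
B(\boldsymbol{w}_q,q)=\int_\Omega q^2\,\dd{x}=\|q\|^2,
\]
and therefore
\[
\frac{B(\boldsymbol{w}_q,q)}{\|\boldsymbol{w}_q\|_1}\ge \frac{\|q\|}{C_\Omega}.
\]
Taking $\theta=1/C_\Omega$ yields the claim, and $\theta$ depends only on the shape of $\Omega$ as stated. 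The case $q=0$ is trivial.

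The only genuine ingredient is the Ne\v{c}as/Bogovski\u{\i} right inverse of the divergence on Lipschitz domains, which I will cite rather than prove; everything else is the inclusion $\boldsymbol{H}_0^1(\Omega)\subset\boldsymbol{V}$. This also shows that the Navier part of the boundary plays no role in the pressure stability, which is consistent with the constant being independent of $\beta$ and $\mu$.
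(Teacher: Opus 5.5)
Your proof is correct: since $\boldsymbol{H}_0^1(\Omega)\subset\boldsymbol{V}$, the Ne\v{c}as--Bogovski\u{\i} right inverse of the divergence on $L_0^2(\Omega)$ gives the bound with $\theta=1/C_\Omega$. The paper gives no proof of its own and only cites \cite{bansal2024nitsche}; reducing to the classical $\boldsymbol{H}_0^1$ inf--sup condition, as you do, is the standard way this continuous result is obtained.
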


Now we introduce the bilinear form
\begin{equation}\label{eqc}
\mathcal{C}(\boldsymbol{u},p,\psi;\boldsymbol{v},q,\phi)
= A(\boldsymbol{u},\boldsymbol{v}) + B(\boldsymbol{u},q) + B(\boldsymbol{v},p) + d(\psi,\phi).
\end{equation}

\begin{theorem}\label{thm1.4}
There exists a positive constant $\alpha = \alpha(\xi,\theta)$ such that
\[
\sup_{0 \neq (\boldsymbol{v},q,\phi)\in \boldsymbol{V} \times Q \times \Phi}
\frac{\mathcal{C}(\boldsymbol{u},p,\psi;\boldsymbol{v},q,\phi)}
{\|(\boldsymbol{v},q,\phi)\|}
\ge \alpha\,\|(\boldsymbol{u},p,\psi)\|,
\qquad
\forall\, (\boldsymbol{u},p,\psi)\in \boldsymbol{V} \times Q \times \Phi
\]
Here, $\xi$ and $\theta$ denote the coercivity and inf-sup stability
constants, respectively.
\end{theorem}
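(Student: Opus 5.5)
The form $\mathcal{C}$ decouples into the Stokes saddle-point part $A(\boldsymbol{u},\boldsymbol{v})+B(\boldsymbol{u},q)+B(\boldsymbol{v},p)$ on $\boldsymbol{V}\times Q$ and the Poisson part $d(\psi,\phi)$ on $\Phi$. I would therefore prove an inf-sup estimate for each block separately and then add the two test functions.

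For the potential block, I take $\phi=\psi$. Since $d(\psi,\psi)=\varepsilon\|\nabla\psi\|^2$, this gives full control of $\|\nabla\psi\|$ with constant $\varepsilon$.

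For the Stokes block, I would invoke the Babu\v{s}ka--Brezzi theory, using the coercivity of $A$ on $\boldsymbol{Z}$ from Lemma~\ref{lma1.2}, the inf-sup condition for $B$ from Lemma~\ref{lma1.3}, and the continuity of $A$ and $B$ from Lemma~\ref{lma1.1}. I would carry this out constructively.
\begin{itemize}
\item Given $p\in Q$, Lemma~\ref{lma1.3} yields $\boldsymbol{w}_p\in\boldsymbol{V}$ with $B(\boldsymbol{w}_p,p)\ge\theta\|p\|^2$ and $\|\boldsymbol{w}_p\|_1\le\|p\|$. (Strictly, $\ge\theta\|p\|^2$ needs the sup in Lemma~\ref{lma1.3} to be attained; otherwise take a near-maximiser and replace $\theta$ by $\theta/2$. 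This affects only constants.)
\item Decompose $\boldsymbol{u}=\boldsymbol{u}_0+\boldsymbol{u}_\perp$ with $\boldsymbol{u}_0\in\boldsymbol{Z}$. The inf-sup condition gives $\|\boldsymbol{u}_\perp\|_1\le\theta^{-1}\|\operatorname{div}\boldsymbol{u}\|$.
\item Test with $\boldsymbol{v}=\boldsymbol{u}+\delta\boldsymbol{w}_p$ and $q=-p+\gamma\,\operatorname{div}\boldsymbol{u}$, adjusting the sign of $q$ to the convention for $B$.
\end{itemize}
With this choice the terms $B(\boldsymbol{v},p)$ and $B(\boldsymbol{u},q)$ cancel. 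What remains is
\[
A(\boldsymbol{u},\boldsymbol{u})+\delta A(\boldsymbol{u},\boldsymbol{w}_p)+\delta B(\boldsymbol{w}_p,p)+\gamma\|\operatorname{div}\boldsymbol{u}\|^2 .
\]

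Next I bound these terms from below.
\begin{itemize}
\item The term $A(\boldsymbol{u},\boldsymbol{u})$ is bounded below by expanding on $\boldsymbol{u}_0+\boldsymbol{u}_\perp$, which gives $\xi\|\boldsymbol{u}_0\|_1^2$ minus cross terms controlled by $\|\operatorname{div}\boldsymbol{u}\|$.
\item The cross term $\delta A(\boldsymbol{u},\boldsymbol{w}_p)$ is absorbed by Young's inequality, using $\delta\le\theta\xi/C_1^2$ or similar.
\item The divergence penalty $\gamma\|\operatorname{div}\boldsymbol{u}\|^2$ absorbs the $\boldsymbol{u}_\perp$ contributions once $\gamma$ is large relative to $C_1^2/(\xi\theta^2)$.
\end{itemize}
This yields
\[
\mathcal{C}\ge c(\xi,\theta)\bigl(\|\boldsymbol{u}\|_1^2+\|p\|^2\bigr)+\varepsilon\|\nabla\psi\|^2 .
\]
Finally, the test norm satisfies
\[
\|(\boldsymbol{v},q,\phi)\|\lesssim \|\boldsymbol{u}\|_1+\|p\|+\|\nabla\psi\|,
\]
using $\|\operatorname{div}\boldsymbol{u}\|\le\sqrt d\,\|\nabla\boldsymbol{u}\|$. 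Dividing gives the claim.

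The main obstacle is the bookkeeping in choosing $\delta$ and $\gamma$ so that every indefinite cross term is absorbed, because coercivity of $A$ holds only on $\boldsymbol{Z}$, not on all of $\boldsymbol{V}$. If this becomes messy, I would simply cite the Babu\v{s}ka--Brezzi theorem: it gives inf-sup stability of the Stokes block with constant depending on $\xi,\theta$ and $\|A\|$. I would then combine it with the $\phi$-block via the elementary observation that a block-diagonal form is inf-sup stable with constant equal to the minimum of the two block constants, up to a factor $1/2$ from the sum norm. In that route the constant $\alpha$ also depends on $\varepsilon$ and $C_1$, which is implicit in ``$\alpha(\xi,\theta)$''.
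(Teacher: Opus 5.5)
Your proposal is correct, and your fallback route is exactly the paper's proof. The paper feeds boundedness of $\mathcal{C}$ (Lemma~\ref{lma1.1}), ellipticity of $A$ on $Z$ (Lemma~\ref{lma1.2}) and the inf-sup condition for $B$ (Lemma~\ref{lma1.3}) into the abstract Babu\v{s}ka--Brezzi result (Proposition~2.36 of Ern--Guermond). It never comments on the decoupled block $d(\psi,\phi)$, so your remarks that this block is handled by $\phi=\psi$, and that $\alpha$ also depends on $\varepsilon$ and on the continuity constant of $A$, make explicit what the paper leaves implicit.

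Your primary route proves the Babu\v{s}ka--Brezzi estimate directly with explicit test functions. It is also valid, gives explicit constants, and uses only kernel coercivity, which matches the stated dependence on $\xi$. You should state why $q=-p-\gamma\operatorname{div}\boldsymbol{u}$ is admissible: $\operatorname{div}\boldsymbol{u}\in L^2_0(\Omega)$ because $\int_\Omega\operatorname{div}\boldsymbol{u}\,\mathrm{d}x=\int_\Gamma\boldsymbol{u}\cdot\boldsymbol{n}\,\mathrm{d}s=0$ for $\boldsymbol{u}\in\boldsymbol{V}$.

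The obstacle you anticipate does not actually arise here. $A$ is symmetric with $A(\boldsymbol{u},\boldsymbol{u})\ge\mu\|\nabla\boldsymbol{u}\|^2$ on all of $\boldsymbol{V}$, and the paper's norm measures the velocity only through $\|\nabla\boldsymbol{u}\|$. So the splitting $\boldsymbol{u}=\boldsymbol{u}_0+\boldsymbol{u}_\perp$ and the divergence penalty can be dropped. Test with $(\boldsymbol{u}+\delta\boldsymbol{w}_p,-p,\psi)$ and bound $|A(\boldsymbol{u},\boldsymbol{w}_p)|\le A(\boldsymbol{u},\boldsymbol{u})^{1/2}A(\boldsymbol{w}_p,\boldsymbol{w}_p)^{1/2}$ by Cauchy--Schwarz for the positive semidefinite form $A$. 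Young's inequality with $\delta$ small then closes the argument. This is the same test function the paper uses later in the proof of Theorem~\ref{thm4.1}; the price is only that the constant involves $\mu$ in place of $\xi$.
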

\begin{proof}
By \Cref{lma1.1}, the bilinear form $\mathcal{C}(\cdot\,;\cdot)$ is uniformly bounded. 
Moreover, the ellipticity of $A$ on $Z$ (see \Cref{lma1.2}) together with the inf-sup condition for $B$ (see \Cref{lma1.3}), and Proposition~2.36 in~\cite{ern2004theory}, ensure that the required inf-sup condition holds.
\end{proof}

\begin{remark}
We first establish the basic properties of the bilinear operators 
$A(\cdot\,,\cdot)$, $B(\cdot\,,\cdot)$ and $\mathcal{C}(\cdot\,;\cdot)$, which are essential for proving 
well-posedness. The existence and uniqueness of the solution follow from 
Banach’s fixed-point theorem combined with the Babu\v{s}ka–Brezzi theory 
\cite{boffi2013mixed} and the Minty–Browder theorem 
\cite{ciarlet2025linear}. Since the proof proceeds along the same lines 
as in the discrete case, we omit the details here to avoid repetition.
\end{remark}

\section{Discrete problem}\label{sec:dp}
We consider the finite element scheme. Let $\mathcal{T}_h$ be a shape-regular simplicial triangulation of the domain $\Omega$ (consisting of triangles in two dimensions or tetrahedra in three dimensions). For each element $K \in \mathcal{T}_h$, let $h_K$ denote its diameter, and define the mesh size by $h = \max_{K \in \mathcal{T}_h} h_K.$

For any facet $e$ (an edge in two dimensions or a face in three dimensions), we denote by $K^{-}$ and $K^{+}$ the two elements of $\mathcal{T}_h$ sharing $e$. Let $h_e$ denote the diameter of the facet $e$. We define by $\mathcal{E}_h$ the set of all facets of the mesh, and decompose it as $ \mathcal{E}_h = \mathcal{E}_{\Omega} \cup \mathcal{E}_{D} \cup \mathcal{E}_{\mathrm{Nav}},$ where $\mathcal{E}_{\Omega}$ denotes the set of interior facets, while
$\mathcal{E}_{D}$ and $\mathcal{E}_{\mathrm{Nav}}$ denote the sets of facets lying on the boundaries $\Gamma_{D}$ and $\Gamma_{\mathrm{Nav}}$, respectively. Furthermore, let $\boldsymbol{n}_e^{+}$ and $\boldsymbol{n}_e^{-}$ be the outward unit normal vectors on the boundaries of the elements $K^{+}$ and $K^{-}$ associated with the facet $e$, respectively. The jump operator $\llbracket \cdot \rrbracket$ across a facet $e \in \mathcal{E}_h^{i}$ is defined by
$\llbracket \boldsymbol{v} \rrbracket := \boldsymbol{v}^{-} - \boldsymbol{v}^{+}, \qquad \llbracket w \rrbracket := w^{-} - w^{+}.$
while for boundary jumps, we adopt the conventions $\llbracket \boldsymbol{v} \rrbracket = \boldsymbol{v}$ and $\llbracket w \rrbracket = w$.

Let us introduce the following finite element spaces for the approximation of the velocity, pressure, and electrostatic potential:
\begin{align*}
\boldsymbol{V}_h & := \{\boldsymbol{v}_h \in C^0(\Omega) :
\boldsymbol{v}_h|_K \in [\mathbb{P}_{k+1}(K)]^n \ \forall K \in \mathcal{T}_h,
\ \boldsymbol{v}_h := \boldsymbol{0} \text{ on } \Gamma_D \}, \\
Q_h &:= \{ q_h \in C^0(\Omega) :
q_h|_K \in \mathbb{P}_k(K) \ \forall K \in \mathcal{T}_h \} \cap Q, \\
\Phi_h &:= \{ \phi_h \in C^0(\Omega) :
\phi_h|_K \in \mathbb{P}_{k+1}(K) \ \forall K \in \mathcal{T}_h,
\ \phi_h = 0 \text{ on } \Gamma \}.
\end{align*}
where $\mathbb{P}_k(K)$ denotes the space of polynomials of degree at most $k \ge 1$ on $K$, we
We introduce the following discrete norm on these spaces, which will be used throughout the analysis, defined by
\begin{equation}\label{norm}
\begin{aligned}
\|\boldsymbol{v}_h\|_{1,h}^{2}
&:= \|\nabla \boldsymbol{v}_h\|^{2}
+ \sum_{e\in\mathcal{E}_{\mathrm{Nav}}}
\frac{1}{h_e}\,
\|\boldsymbol{v}_h \cdot \boldsymbol{n}\|_{0,e}^{2},\\
\|(\boldsymbol{u}_h,p_h,\psi_h)\| &:=
\|\nabla \boldsymbol{u}_h\|^{2}
+ \sum_{e\in\mathcal{E}_{\mathrm{Nav}}}
\frac{1}{h_e}\,
\|\boldsymbol{u}_h \cdot \boldsymbol{n}\|_{0,e}^{2} \,+\, \|p_h\|^2 \,+\, \|\nabla\psi_h\|^2.
\end{aligned}
\end{equation}
\begin{remark}
    It is easy to verify that $\boldsymbol{V}_h$ is not a subspace of $\boldsymbol{V}$ so Nitsche’s method can be considered as a non-conforming finite element method.
\end{remark}
\subsection{Nitsche’s method}
Nitsche’s method \cite{stenberg1995some,winter2018nitsche} is designed to impose boundary conditions weakly, ensuring that they are fulfilled only in an asymptotic sense. In this study, we employ this technique solely for the Navier boundary condition $\boldsymbol{u}\cdot \boldsymbol{n} = 0$. As a result, the weak formulation corresponding to the symmetric Nitsche approach is stated below:
\begin{align}\label{ns1}
    \mathcal{B}_h(\boldsymbol{u}_h,p_h,\psi_h, \boldsymbol{v}_h, q_h, \phi_h) = F(\boldsymbol{v}_h) + G(\phi_h), \qquad\qquad \forall (\boldsymbol{v}_h, q_h, \phi_h) \in \boldsymbol{V}_h\times Q_h\times \Phi_h, 
\end{align}
where
\begin{align*}
    \mathcal{B}_h(\boldsymbol{u}_h, p_h, \psi_h;\boldsymbol{v}_h, q_h, \phi_h) :=& \sum\limits_{K\in \mathcal{T}_h} \bigg(\mu(\nabla \boldsymbol{u}_h, \nabla \boldsymbol{v}_h)_K - (p_h, \nabla\cdot\boldsymbol{v}_h)_K + (\boldsymbol{u}_h\cdot\nabla\psi_h \boldsymbol{E}, \boldsymbol{v}_h)_K - (q_h, \nabla\cdot\boldsymbol{u}_h)_K \\
    &\quad+ (\mathcal{K}(\psi_h),\phi_h)_K + (\boldsymbol{u}_h\cdot\nabla\psi_h, \phi_h)_K + \varepsilon(\nabla\psi, \nabla\phi)_K + (\mathcal{K}(\psi_h)\boldsymbol{E},\boldsymbol{v}_h)_K \bigg)  \\
    & \quad + \sum_{e \in \mathcal{E}_{\mathrm{Nav}}}
\bigg(
- \int_e \boldsymbol{n}^{t}\big(\mu \nabla\boldsymbol{u}_h - p_h \boldsymbol{I}\big)\boldsymbol{n}\,
(\boldsymbol{n}\cdot \boldsymbol{v}_h)\,\dd{s}
- \int_e \boldsymbol{n}^t\big(\mu \nabla\boldsymbol{v}_h - q_h \boldsymbol{I}\big)\boldsymbol{n}\, (\boldsymbol{n}\cdot \boldsymbol{u}_h)\,\dd{s} \\
&\quad
+ \int_e \beta \sum_{i=1}^{d-1}
(\boldsymbol{\tau}^i \cdot \boldsymbol{v}_h)\,
(\boldsymbol{\tau}^i \cdot \boldsymbol{u}_h)\,\dd{s}
+ \frac{\gamma}{h_e} \int_e 
(\boldsymbol{u}_h \cdot \boldsymbol{n})\,
(\boldsymbol{v}_h \cdot \boldsymbol{n})\,\dd{s}
\bigg),
\end{align*}
where $\boldsymbol{I}$ denotes the identity tensor in $\mathbb{R}^{n\times n}$ and $\gamma>0$ is the Nitsche parameter. 
Now \eqref{ns1} can be written in the following systematic form:
Find $(\boldsymbol{u}_h, p_h, \psi_h)\in \boldsymbol{V}_h \times Q_h \times \Phi_h$ such that

\begin{equation}\label{ns2}
\begin{aligned}
A_h(\boldsymbol{u}_h,\boldsymbol{v}_h) + B_h(\boldsymbol{v}_h,p_h) + \boldsymbol{c}_1(\boldsymbol{u}_h;\psi_h,\boldsymbol{v}_h) + (\mathcal{K}(\psi_h)\boldsymbol{E},\boldsymbol{v}_h)
&= F(\boldsymbol{v}_h),
\qquad && \forall \boldsymbol{v}_h \in \boldsymbol{V}_h, \\
B_h(\boldsymbol{u}_h,q_h) &= 0,
&& \forall q_h \in Q_h, \\
(\mathcal{K}(\psi_h),\phi_h) + \boldsymbol{c}_2(\boldsymbol{u}_h;\psi_h,\phi_h) + d(\psi_h,\phi_h)
&= G(\phi_h),
&& \forall \phi_h \in \Phi_h.
\end{aligned}
\end{equation}
where
\begin{align*}
A_h(\boldsymbol{u}_h,\boldsymbol{v}_h)
&:= a(\boldsymbol{u}_h,\boldsymbol{v}_h)
+ a_{\tau}^{\partial}(\boldsymbol{u}_h,\boldsymbol{v}_h)
+ a_{\gamma}^{\partial}(\boldsymbol{u}_h,\boldsymbol{v}_h)
- a_{c}^{\partial}(\boldsymbol{u}_h,\boldsymbol{v}_h)
- a_{c}^{\partial}(\boldsymbol{v}_h,\boldsymbol{u}_h), \\[0.5em]
B_h(\boldsymbol{u}_h,q_h)
&:= b(\boldsymbol{u}_h,q_h) + b^{\partial}(\boldsymbol{u}_h,q_h).
\end{align*}
with the forms defined as
\begin{align*}
a(\boldsymbol{u}_h,\boldsymbol{v}_h)
&:= \sum_{K\in\mathcal{T}_h}
\mu\,(\nabla\boldsymbol{u}_h, \nabla\boldsymbol{v}_h)_K, &b(\boldsymbol{u}_h,q_h)
&:= - \sum_{K\in\mathcal{T}_h}
(\operatorname{div}\boldsymbol{u}_h,q_h)_K, \\[0.5em]
a_{\tau}^{\partial}(\boldsymbol{u}_h,\boldsymbol{v}_h)
&:= \sum_{e\in\mathcal{E}_{\mathrm{Nav}}}
\int_e \beta \sum_{i=1}^{d-1}
(\boldsymbol{\tau}^i\cdot \boldsymbol{u}_h)
(\boldsymbol{\tau}^i\cdot \boldsymbol{v}_h)\,\dd{s},
&b^{\partial}(\boldsymbol{u}_h,q_h)
&:= \sum_{e\in\mathcal{E}_{\mathrm{Nav}}}
\int_e q_h\,(\boldsymbol{n}\cdot \boldsymbol{u}_h)\,\dd{s}, \\[0.5em]
a_{\gamma}^{\partial}(\boldsymbol{u}_h,\boldsymbol{v}_h)
&:= \sum_{e\in\mathcal{E}_{\mathrm{Nav}}}
\int_e \frac{\gamma}{h_e}
(\boldsymbol{u}_h\cdot \boldsymbol{n})
(\boldsymbol{v}_h\cdot \boldsymbol{n})\,\dd{s},
&c_1(\boldsymbol{u}_h;\psi_h,\boldsymbol{v}_h)
&:= \sum_{K\in\mathcal{T}_h}
(\boldsymbol{u}_h\cdot\nabla \psi_h \boldsymbol{E},\boldsymbol{v}_h)_K,
\\[0.5em]
a_{c}^{\partial}(\boldsymbol{u}_h,\boldsymbol{v}_h)
&:= \sum_{e\in\mathcal{E}_{\mathrm{Nav}}}
\int_e \mu \boldsymbol{n}^{t}\nabla\boldsymbol{u}_h\boldsymbol{n}\,
(\boldsymbol{n}\cdot \boldsymbol{v}_h)\,\dd{s}, 
&c_2(\boldsymbol{u}_h;\psi_h,\phi_h)
&:= \sum_{K\in\mathcal{T}_h}
(\boldsymbol{u}_h\cdot\nabla \psi_h,\phi_h)_K.
\\[0.5em]
\end{align*}

Now we present the well-known inverse and trace inequalities along with two fundamental results, namely the ellipticity of $A_h$ and the inf–sup condition for $B_h$.
\medskip
\begin{lemma}[Inverse inequality {\cite[Lemma~12.1]{ErnGuermond2021}}]\label{lma2.2}
Let $v_h \in V_h$. For each element $K \in \mathcal{T}_h$ and for any integers
$l,m \in \mathbb{N}$ with $0 \le m \le l$, there exists a positive constant
$C_4$, independent of $K$, such that
\[
\|v_h\|_{l,K} \le C_4\, h_K^{\,m-l}\, \|v_h\|_{m,K}.
\]
\end{lemma}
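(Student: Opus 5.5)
The inverse inequality of \Cref{lma2.2} is a purely local statement about polynomials on a single simplex, so I will prove it element by element by a scaling argument, i.e.\ transport to a reference element, finite dimensionality, and transport back. Fix $K\in\mathcal{T}_h$ and let $\hat K$ be the reference simplex with affine map $F_K(\hat x)=B_K\hat x+b_K$, $F_K(\hat K)=K$. For $v_h|_K$ (each component of which lies in $\mathbb{P}_{k+1}(K)$), set $\hat v:=v_h\circ F_K\in\mathbb{P}_{k+1}(\hat K)$. Since $F_K$ is affine, $\hat v$ stays in the same finite-dimensional polynomial space $\hat P:=\mathbb{P}_{k+1}(\hat K)$, independent of $K$.

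\textbf{Step 1 (reference estimate).} On $\hat P$, both $\|\cdot\|_{l,\hat K}$ and $\|\cdot\|_{m,\hat K}$ are norms; for $m\ge0$ the latter is genuinely a norm since it contains the $L^2$ part. Because $\hat P$ is finite dimensional, they are equivalent, so $\|\hat v\|_{l,\hat K}\le \hat C\|\hat v\|_{m,\hat K}$ with $\hat C$ depending only on $l,m,k,n$.

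\textbf{Step 2 (scaling of seminorms).} I would use the standard chain-rule bounds
\[
|v|_{j,K}\le C\|B_K^{-1}\|^{j}|\det B_K|^{1/2}|\hat v|_{j,\hat K},\qquad |\hat v|_{j,\hat K}\le C\|B_K\|^{j}|\det B_K|^{-1/2}|v|_{j,K},
\]
together with $\|B_K\|\le h_K/\hat\rho$ and $\|B_K^{-1}\|\le \hat h/\rho_K\le C h_K^{-1}$. The last bound is where shape regularity of $\mathcal{T}_h$ enters, via $h_K/\rho_K\le\sigma$.

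\textbf{Step 3 (combine).} Summing over $j\le l$ and assuming $h_K\le \operatorname{diam}\Omega$ gives
\[
\|v_h\|_{l,K}\le C\sum_{j\le l}h_K^{-j}|\det B_K|^{1/2}|\hat v|_{j,\hat K}\le C h_K^{-l}|\det B_K|^{1/2}\|\hat v\|_{l,\hat K},
\]
where I used $h_K^{-j}\le Ch_K^{-l}$ for $j\le l$. Applying Step 1 and then the reverse scaling yields
\[
|\det B_K|^{1/2}\|\hat v\|_{m,\hat K}\le C\sum_{j\le m}h_K^{j}|v_h|_{j,K}\le C\|v_h\|_{m,K}
\]
for bounded $h_K$. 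This gives $\|v_h\|_{l,K}\le C_4h_K^{m-l}\|v_h\|_{m,K}$, with $C_4$ depending only on $\hat C$, $\sigma$, $k$, $l$ and $\operatorname{diam}\Omega$. The vector-valued case follows componentwise.

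The main obstacle is the mismatch of powers in full (not semi) norms. The lower-order terms carry smaller negative powers of $h_K$, and these must be absorbed using $h_K\lesssim1$. Keeping $C_4$ independent of $K$ relies on shape regularity, which controls $\|B_K^{-1}\|$. I would check carefully that only an upper bound on $h_K$ is needed, never a lower one.
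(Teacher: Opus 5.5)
Your strategy is right: pull back to the reference simplex, use finite dimensionality of $\mathbb{P}_{k+1}(\hat K)$, and push forward with the affine scaling bounds and shape regularity. This is the standard argument behind the Ern--Guermond result that the paper cites without proof. However, Step 3 does not give the stated power of $h_K$. Chaining your displays yields
\[
\|v_h\|_{l,K}\le C h_K^{-l}\,|\det B_K|^{1/2}\|\hat v\|_{m,\hat K}\le C h_K^{-l}\sum_{j\le m}h_K^{j}|v_h|_{j,K}\le C h_K^{-l}\|v_h\|_{m,K}.
\]
This gives the exponent $-l$, not $m-l$, so your sentence ``This gives $\|v_h\|_{l,K}\le C_4h_K^{m-l}\|v_h\|_{m,K}$'' does not follow when $m\ge 1$. (For $m=0$ your argument is fine.) The missing factor $h_K^{m}$ cannot be recovered from this chain. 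Bounding $\sum_{j\le m}h_K^{j}|v_h|_{j,K}$ by $Ch_K^{m}\|v_h\|_{m,K}$ would require $h_K^{j}\lesssim h_K^{m}$ for $j<m$, which is a \emph{lower} bound on $h_K$, exactly what you wanted to avoid. The loss comes from Step 1. A full-norm equivalence on $\hat P$ mixes derivatives of different orders, which scale with different powers of $h_K$. The low-order part of $\|\hat v\|_{m,\hat K}$ then pushes forward with too large a factor.

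The repair is to make the reference estimate a seminorm estimate and use it only for orders $j>m$. For $m\le j\le l$ one has $|\hat v|_{j,\hat K}\le\hat C|\hat v|_{m,\hat K}$ on $\hat P=\mathbb{P}_{k+1}(\hat K)$. Indeed, $|\hat v|_{m,\hat K}=0$ forces $\hat v\in\mathbb{P}_{m-1}$, hence $|\hat v|_{j,\hat K}=0$. So $|\cdot|_{j,\hat K}$ is a seminorm bounded by the norm $|\cdot|_{m,\hat K}$ on the finite-dimensional quotient $\hat P/(\hat P\cap\mathbb{P}_{m-1})$. Your Step 2 then gives
\[
|v_h|_{j,K}\le C\|B_K^{-1}\|^{j}|\det B_K|^{1/2}|\hat v|_{j,\hat K}\le C h_K^{-j}|\det B_K|^{1/2}|\hat v|_{m,\hat K}\le C h_K^{-j}\|B_K\|^{m}|v_h|_{m,K}\le C h_K^{m-j}|v_h|_{m,K}.
\]
Now write $\|v_h\|_{l,K}^2=\|v_h\|_{m,K}^2+\sum_{j=m+1}^{l}|v_h|_{j,K}^2$.
\begin{itemize}
\item Leave the first term alone and bound it by $(\operatorname{diam}\Omega)^{2(l-m)}h_K^{2(m-l)}\|v_h\|_{m,K}^2$.
\item Bound each remaining term by $Ch_K^{2(m-j)}|v_h|_{m,K}^2\le C(\operatorname{diam}\Omega)^{2(l-j)}h_K^{2(m-l)}\|v_h\|_{m,K}^2$.
\end{itemize}
This uses only the upper bound $h_K\le\operatorname{diam}\Omega$ and shape regularity. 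It yields the lemma with $C_4$ independent of $K$.
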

\begin{lemma}[Trace inequality I {\cite[Lemma~12.8]{ErnGuermond2021}}]\label{lma2.3}
Let $v_h \in V_h$. For each element $K \in \mathcal{T}_h$ and each facet
$e \subset \partial K$, there exists a positive constant $C_5$, independent of
$K$, such that
\[
\|v_h\|_{0,e} \le C_5\, h_K^{-1/2}\, \|v_h\|_{0,K}.
\]
\end{lemma}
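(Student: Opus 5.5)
The statement is the discrete trace inequality of \Cref{lma2.3}: for $v_h$ piecewise polynomial of degree at most $k+1$ on a shape-regular simplex $K$ and a facet $e\subset\partial K$, we want $\|v_h\|_{0,e}\le C_5 h_K^{-1/2}\|v_h\|_{0,K}$ with $C_5$ independent of $K$. I will use a scaling argument with the affine map $F_K:\hat K\to K$, $F_K(\hat x)=B_K\hat x+b_K$, from the reference simplex $\hat K$. The map sends a reference facet $\hat e$ onto $e$. Set $\hat v:=v_h|_K\circ F_K$, which lies in $\mathbb{P}_{k+1}(\hat K)$ (componentwise in the vector case, where one applies the scalar argument to each component and sums).

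First, on the fixed reference element, $\hat v\mapsto\|\hat v\|_{0,\hat e}$ and $\hat v\mapsto\|\hat v\|_{0,\hat K}$ are a seminorm and a norm on the finite-dimensional space $\mathbb{P}_{k+1}(\hat K)$. By equivalence of norms (or simply the continuity of the seminorm with respect to the norm) there is $\hat C=\hat C(k,n)$ with $\|\hat v\|_{0,\hat e}\le \hat C\|\hat v\|_{0,\hat K}$. There are finitely many reference facets, so one takes the maximum over them.

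Second, I transform the two sides. For the volume integral, $\|v_h\|_{0,K}^2=|\det B_K|\,\|\hat v\|_{0,\hat K}^2$. For the facet integral, $\|v_h\|_{0,e}^2=\frac{|e|}{|\hat e|}\|\hat v\|_{0,\hat e}^2$, since $F_K|_{\hat e}$ is affine onto $e$ with constant surface Jacobian $|e|/|\hat e|$. Combining these gives
\[
\|v_h\|_{0,e}^2\le \hat C^2\,\frac{|e|\,|\hat K|}{|\hat e|\,|K|}\,\|v_h\|_{0,K}^2 .
\]

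Third, I bound the geometric ratio, which is the only delicate step and where shape regularity enters. We have $|e|\le C h_K^{n-1}$. Shape regularity gives $\rho_K\ge \sigma^{-1}h_K$, hence $|K|\ge c\rho_K^n\ge c\,\sigma^{-n}h_K^n$. Therefore $|e|/|K|\le C\sigma^n h_K^{-1}$. Taking square roots yields the claim with $C_5=\hat C\,(C\sigma^n|\hat K|/|\hat e|)^{1/2}$, which depends only on $k$, $n$ and the shape-regularity constant, not on $K$. This constant is exactly what makes $C_5$ uniform over the mesh.
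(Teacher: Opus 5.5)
Your scaling argument is correct. It pulls back to the reference simplex, bounds the trace seminorm by the $L^2(\hat K)$ norm on the finite-dimensional space $\mathbb{P}_{k+1}(\hat K)$, and controls $|e|/|K|\lesssim \sigma^n h_K^{-1}$ through shape regularity, so the constant depends only on $k$, $n$ and $\sigma$. The paper gives no proof of its own and simply cites Ern--Guermond, Lemma 12.8, whose proof is essentially this same pullback-and-scaling argument.
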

\begin{lemma}[Trace inequality II {\cite[Lemma~12.15]{ErnGuermond2021}}] \label{lma2.4}
Let $v_h \in H^1(K)$, with $K \in \mathcal{T}_h$. 
Then, for any facet $e \subset \partial K$, there exists a constant 
$C_{\mathrm{tr}}$, independent of $K$, such that
\[
\|v_h\|_{0,e}
\le C_{\mathrm{tr}}
\left(
h_K^{-1/2} \|v_h\|_{0,K}
+ h_K^{1/2} \|\nabla v_h\|_{0,K}
\right).
\]
\end{lemma}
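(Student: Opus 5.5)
The plan is to establish the estimate first on the reference simplex and then transport it to $K$ by the affine map $F_K:\widehat K\to K$, $F_K(\widehat x)=B_K\widehat x+b_K$. Shape regularity of $\mathcal{T}_h$ gives $\|B_K\|\lesssim h_K$, $\|B_K^{-1}\|\lesssim h_K^{-1}$ and $|\det B_K|\simeq h_K^{n}$. The $(n-1)$-dimensional measure of a facet $e\subset\partial K$ satisfies $|e|\simeq h_K^{n-1}$, with constants depending only on the shape-regularity parameter. This is the standard route and keeps $C_{\mathrm{tr}}$ independent of $K$.

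The first step is a multiplicative trace estimate on the reference element. For $\widehat v\in H^1(\widehat K)$ and a facet $\widehat e$ of $\widehat K$, we seek
\[
\|\widehat v\|_{0,\widehat e}^2\le C\big(\|\widehat v\|_{0,\widehat K}^2+\|\widehat v\|_{0,\widehat K}\,\|\widehat\nabla\widehat v\|_{0,\widehat K}\big).
\]
We would prove this by the divergence theorem, working first with smooth $\widehat v$ and concluding by density of $C^1(\overline{\widehat K})$ in $H^1(\widehat K)$.
\begin{itemize}
\item Take the vertex $\widehat a$ opposite $\widehat e$ and set $\boldsymbol{\sigma}(\widehat x)=\widehat x-\widehat a$.
\item Then $\boldsymbol{\sigma}\cdot\widehat{\boldsymbol n}=0$ on the other facets, and $\boldsymbol{\sigma}\cdot\widehat{\boldsymbol n}$ equals a positive constant $c_{\widehat e}$ on $\widehat e$ (the distance from $\widehat a$ to $\widehat e$).
\item Also $\operatorname{div}\boldsymbol{\sigma}=n$.
\end{itemize}
Integrating $\operatorname{div}(\widehat v^2\boldsymbol{\sigma})$ over $\widehat K$ therefore gives
\[
c_{\widehat e}\|\widehat v\|_{0,\widehat e}^2=n\|\widehat v\|_{0,\widehat K}^2+2\int_{\widehat K}\widehat v\,\widehat\nabla\widehat v\cdot\boldsymbol{\sigma}\,\dd{x}.
\]
The claimed bound follows from Cauchy--Schwarz together with $|\boldsymbol{\sigma}|\le\operatorname{diam}\widehat K$. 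The same computation can also be done directly on $K$, with $\boldsymbol{\sigma}=x-a_K$. Then $\boldsymbol{\sigma}\cdot\boldsymbol n=n|K|/|e|\simeq h_K$ on $e$ and $|\boldsymbol{\sigma}|\le h_K$, which yields
\[
\|v\|_{0,e}^2\lesssim h_K^{-1}\|v\|_{0,K}^2+\|v\|_{0,K}\|\nabla v\|_{0,K}.
\]
This avoids the mapping entirely, so we would present it this way.

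The final step is elementary. Apply Young's inequality $\|v\|_{0,K}\|\nabla v\|_{0,K}\le\tfrac12 h_K^{-1}\|v\|_{0,K}^2+\tfrac12 h_K\|\nabla v\|_{0,K}^2$. This gives
\[
\|v\|_{0,e}^2\lesssim h_K^{-1}\|v\|_{0,K}^2+h_K\|\nabla v\|_{0,K}^2,
\]
and taking square roots with $\sqrt{a+b}\le\sqrt a+\sqrt b$ yields the stated inequality. For vector-valued $v_h$ the argument is applied componentwise.

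The main obstacle is the uniformity of the constant. The key identity is $\boldsymbol{\sigma}\cdot\boldsymbol n|_e=n|K|/|e|$, and we must check that its ratio to $h_K$ is bounded above and below by constants depending only on the shape-regularity parameter. This follows from $|K|\gtrsim\rho_K^n$, where $\rho_K$ is the inradius, combined with $h_K\lesssim\rho_K$ and $|e|\lesssim h_K^{n-1}$. This is the only place where the mesh assumptions enter.
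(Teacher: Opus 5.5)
Your proof is correct and follows essentially the route the paper relies on. The paper gives no argument of its own and simply cites Lemma~12.15 of Ern--Guermond; your argument is the standard proof of that multiplicative trace inequality: the divergence identity for $v^2(x-a_K)$, with $(x-a_K)\cdot\boldsymbol{n}$ vanishing on the other facets and equal to the height $n|K|/|e|\gtrsim h_K$ on $e$, followed by Young's inequality, with shape regularity used only to bound that height from below.
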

We now present the boundedness of the discrete bilinear forms with respect to the discrete energy norm $\|\cdot\|_{1,h}$.

\begin{lemma}\label{lma2.5}
There exist positive constants $C_6$ and $C_7$, independent of $h$, such that
\begin{align*}
|A_h(\boldsymbol{u}_h,\boldsymbol{v}_h)|
&\le C_6 \|\boldsymbol{u}_h\|_{1,h}\,\|\boldsymbol{v}_h\|_{1,h},
&& \forall\, \boldsymbol{u}_h,\boldsymbol{v}_h \in \boldsymbol{V}_h, \\
|c_1(\boldsymbol{w}_h;\psi_h,\boldsymbol{v}_h)|
&\le \bar{E}C_{\mathrm{Sob}}^2 \|\boldsymbol{w}_h\|_{1,h}\,
\|\psi_h\|_{1}\,\|\boldsymbol{v}_h\|_{1,h},
&& \forall\, \boldsymbol{w}_h,\boldsymbol{v}_h \in \boldsymbol{V}_h,\; \psi_h \in \Phi_h, \\
|c_2(\boldsymbol{w}_h;\psi_h,\phi_h)|
&\le C_{\mathrm{Sob}}^2 \|\boldsymbol{w}_h\|_{1,h}\,
\|\psi_h\|_{1}\,
\|\phi_h\|_{1},
&& \forall\, \boldsymbol{w}_h \in \boldsymbol{V}_h,\; \psi_h,\phi_h \in \Phi_h, \\
|B_h(\boldsymbol{v}_h,q_h)|
&\le C_7 \|\boldsymbol{v}_h\|_{1,h}\,\|q_h\|,
&&\forall\, \boldsymbol{v}_h \in \boldsymbol{V}_h,\; q_h \in Q_h.
\end{align*}
\end{lemma}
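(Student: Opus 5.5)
The plan is to bound each form in the statement term by term. Cauchy--Schwarz is used on volume and facet integrals. The facet integrals on $\mathcal{E}_{\mathrm{Nav}}$ are paired with the weights $h_e^{\pm 1/2}$, so that one factor always reproduces the penalty part $h_e^{-1/2}\|\boldsymbol{v}_h\cdot\boldsymbol{n}\|_{0,e}$ of $\|\cdot\|_{1,h}$. The other factor is then pulled back to the adjacent element $K_e$ via the discrete trace inequality (\Cref{lma2.3}). Throughout we use shape regularity, i.e.\ $h_e\simeq h_{K_e}$, and the fact that each element has a bounded number of facets.

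\emph{Volume terms.} We will use that $\boldsymbol{V}_h\subset \boldsymbol{H}^1(\Omega)$ (continuous piecewise polynomials) and that $\boldsymbol{v}_h=\boldsymbol{0}$ on $\Gamma_D$. We assume $|\Gamma_D|>0$ so that Poincaré's inequality applies; this is the standing setting needed for \Cref{lma1.2} and is not proved here. Poincaré then gives $\|\boldsymbol{v}_h\|_1\lesssim\|\nabla\boldsymbol{v}_h\|\le\|\boldsymbol{v}_h\|_{1,h}$, with the Poincaré constant absorbed into $C_{\mathrm{Sob}}$. With this, the bounds for $c_1$ and $c_2$ follow exactly as in \Cref{lma1.1}: Hölder with exponents $(4,2,4)$, the embedding $H^1\hookrightarrow L^4$, and \ref{(A3)} for $\boldsymbol{E}$. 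The term $a(\cdot,\cdot)$ is bounded by $\mu\|\nabla\boldsymbol{u}_h\|\|\nabla\boldsymbol{v}_h\|$. In $B_h$, the interior part satisfies $|b(\boldsymbol{v}_h,q_h)|\le\sqrt{n}\,\|\nabla\boldsymbol{v}_h\|\|q_h\|$.

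\emph{Nitsche boundary terms.} The penalty term is immediate:
\[
|a^\partial_\gamma(\boldsymbol{u}_h,\boldsymbol{v}_h)|\le\gamma\,\|\boldsymbol{u}_h\|_{1,h}\|\boldsymbol{v}_h\|_{1,h}.
\]
For the consistency term we write
\[
|a_c^\partial(\boldsymbol{u}_h,\boldsymbol{v}_h)|\le\mu\sum_{e}h_e^{1/2}\|\nabla\boldsymbol{u}_h\|_{0,e}\,h_e^{-1/2}\|\boldsymbol{v}_h\cdot\boldsymbol{n}\|_{0,e}.
\]
Since $\nabla\boldsymbol{u}_h$ is polynomial on $K_e$, \Cref{lma2.3} gives $h_e^{1/2}\|\nabla\boldsymbol{u}_h\|_{0,e}\le C\|\nabla\boldsymbol{u}_h\|_{0,K_e}$. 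A discrete Cauchy--Schwarz over facets then yields $C\mu\|\boldsymbol{u}_h\|_{1,h}\|\boldsymbol{v}_h\|_{1,h}$, and the transposed term $a_c^\partial(\boldsymbol{v}_h,\boldsymbol{u}_h)$ is handled identically. The boundary part of $B_h$ is treated the same way:
\[
|b^\partial(\boldsymbol{v}_h,q_h)|\le\sum_e h_e^{1/2}\|q_h\|_{0,e}\,h_e^{-1/2}\|\boldsymbol{v}_h\cdot\boldsymbol{n}\|_{0,e}\le C_5\|q_h\|\,\|\boldsymbol{v}_h\|_{1,h}.
\]
Adding this to the interior part gives the bound for $B_h$ with constant $C_7$.

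\emph{Main obstacle: the friction term.} The step that needs most care is $a^\partial_\tau$, because the tangential trace $\boldsymbol{v}_h\cdot\boldsymbol{\tau}^i$ is not controlled by any facet term of $\|\cdot\|_{1,h}$. We bound it as $\beta\|\boldsymbol{u}_h\|_{0,\Gamma_{\mathrm{Nav}}}\|\boldsymbol{v}_h\|_{0,\Gamma_{\mathrm{Nav}}}$. We then use the continuous trace theorem $\|\boldsymbol{v}_h\|_{0,\Gamma}\lesssim\|\boldsymbol{v}_h\|_1$ (or, elementwise, \Cref{lma2.4} summed over boundary elements, which gives an $h$-independent bound). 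Poincaré (using $\boldsymbol{v}_h=\boldsymbol{0}$ on $\Gamma_D$) then gives $\|\boldsymbol{v}_h\|_1\lesssim\|\nabla\boldsymbol{v}_h\|$. Hence $|a^\partial_\tau(\boldsymbol{u}_h,\boldsymbol{v}_h)|\lesssim\beta\|\boldsymbol{u}_h\|_{1,h}\|\boldsymbol{v}_h\|_{1,h}$, with a constant depending on $\Omega$ and $\Gamma_D$ but not on $h$. Collecting all contributions gives $C_6\simeq\mu+\beta+\gamma+\mu C_5$, up to domain-dependent factors, which completes the argument.
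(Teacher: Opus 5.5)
Your proposal is correct and follows the same route as the paper's one-line proof. That route is Cauchy--Schwarz and Hölder, the embedding $H^1\hookrightarrow L^4$ for $c_1$ and $c_2$, and the discrete trace inequality of \Cref{lma2.3}, which absorbs the $h_e^{1/2}$-weighted facet traces of $\nabla\boldsymbol{u}_h$ and $q_h$ against the penalty part of $\|\cdot\|_{1,h}$. You also handle the friction term $a^\partial_\tau$ correctly with the global trace theorem plus Poincaré, a step the paper does not spell out.

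The one thing to drop is your parenthetical alternative via \Cref{lma2.4}. Summing its $h_K^{-1/2}\|\boldsymbol{v}_h\|_{0,K}$ contributions over boundary elements gives terms $h_K^{-1}\|\boldsymbol{v}_h\|_{0,K}^2$, which are not bounded uniformly in $h$ without an additional boundary-strip estimate. Keep the continuous trace theorem instead.
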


\begin{proof}
The proof of these inequalities follows directly from \Cref{lma2.2,lma2.3}, combined with the Sobolev embedding theorem and standard Cauchy–Schwarz and H\"older inequalities.
\end{proof}

The discrete kernel of $B_h$ is defined by
\[
Z_h := \{\boldsymbol{v}_h \in \boldsymbol{V}_h : B_h(\boldsymbol{v}_h,p_h) = 0,
\ \forall\, p_h \in Q_h \}.
\]
It can equivalently be written as
\begin{equation}
Z_h :=
\left\{
\boldsymbol{v}_h \in \boldsymbol{V}_h :
\sum_{K\in\mathcal{T}_h} \int_K p_h\,\operatorname{div}\boldsymbol{v}_h\,\mathrm{d}x
-
\sum_{e\in\mathcal{E}_{\mathrm{Nav}}} \int_e p_h\,(\boldsymbol{n}\cdot\boldsymbol{v}_h)\,\dd{s}
= 0,
\ \forall\, p_h \in Q_h
\right\}.
\end{equation}
The following lemma establishes the ellipticity of $A_h$ on $\boldsymbol{V}_h$.
\medskip
\begin{lemma}\label{lma2.6}
There exist positive constants $\gamma_0, C_0, C_S = C_S(\beta,\mu,\Omega)$, independent of $h$, such that
\begin{align*}
A_h(\boldsymbol{v}_h,\boldsymbol{v}_h)
\ge C_S \|\boldsymbol{v}_h\|_{1,h}^2,
\qquad \forall\, \boldsymbol{v}_h \in Z_h,
\end{align*}
where,
\[
C_S = \min\left\{\xi - \mu C_5^2 C_0,\; \gamma - \frac{\mu}{C_0}\right\},
\quad
\gamma \ge \gamma_0 > \frac{\mu}{C_0},
\quad
C_0 < \frac{\xi}{C_5^2},
\]
and $\xi$ is the coercivity constant defined in the continuous case.
\end{lemma}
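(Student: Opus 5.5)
We expand $A_h(\boldsymbol{v}_h,\boldsymbol{v}_h)$ according to its definition,
\[
A_h(\boldsymbol{v}_h,\boldsymbol{v}_h) = a(\boldsymbol{v}_h,\boldsymbol{v}_h) + a_\tau^\partial(\boldsymbol{v}_h,\boldsymbol{v}_h) + \sum_{e\in\mathcal{E}_{\mathrm{Nav}}}\frac{\gamma}{h_e}\|\boldsymbol{v}_h\cdot\boldsymbol{n}\|_{0,e}^2 - 2a_c^\partial(\boldsymbol{v}_h,\boldsymbol{v}_h),
\]
and bound the three nonnegative pieces from below and the consistency term from above. For the first two pieces we want the lower bound $a(\boldsymbol{v}_h,\boldsymbol{v}_h)+a_\tau^\partial(\boldsymbol{v}_h,\boldsymbol{v}_h)\ge \xi\|\nabla\boldsymbol{v}_h\|^2$, with $\xi$ the constant of \Cref{lma1.2}. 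Here we take the view that the argument behind \Cref{lma1.2} (a Korn/Poincaré-type inequality using $\Gamma_D$ or the tangential friction $\beta>0$) only controls $\|\nabla\boldsymbol{v}\|^2$ by $\mu\|\nabla\boldsymbol{v}\|^2+\beta\sum_i\|\boldsymbol{v}\cdot\boldsymbol{\tau}^i\|^2_{\Gamma_{\mathrm{Nav}}}$. It therefore transfers to piecewise polynomial $\boldsymbol{v}_h$ even though $\boldsymbol{v}_h\cdot\boldsymbol{n}\neq0$. In the simplest reading, with the seminorm $\|\nabla\cdot\|$ being the relevant part of $\|\cdot\|_{1,h}$, we just have $a(\boldsymbol{v}_h,\boldsymbol{v}_h)\ge\mu\|\nabla\boldsymbol{v}_h\|^2$ and $a_\tau^\partial\ge0$. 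So we use $\xi\|\nabla\boldsymbol{v}_h\|^2$ as the coercive part, as the statement's constant $C_S$ suggests.

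The main obstacle is the nonsymmetric-looking consistency term $2a_c^\partial(\boldsymbol{v}_h,\boldsymbol{v}_h)=2\sum_e\int_e\mu\,\boldsymbol{n}^t\nabla\boldsymbol{v}_h\boldsymbol{n}\,(\boldsymbol{n}\cdot\boldsymbol{v}_h)\,\dd{s}$. We bound it by Cauchy–Schwarz on each facet, then by the discrete trace inequality \Cref{lma2.3} applied to $\nabla\boldsymbol{v}_h$ on the element $K_e$ containing $e$, together with shape regularity ($h_e\simeq h_K$):
\[
2|a_c^\partial(\boldsymbol{v}_h,\boldsymbol{v}_h)|\le 2\mu\sum_e C_5 h_e^{-1/2}\|\nabla\boldsymbol{v}_h\|_{0,K_e}\|\boldsymbol{v}_h\cdot\boldsymbol{n}\|_{0,e}.
\]
Young's inequality with weight $C_0$ then gives
\[
2|a_c^\partial(\boldsymbol{v}_h,\boldsymbol{v}_h)|\le \mu C_5^2C_0\|\nabla\boldsymbol{v}_h\|^2 + \frac{\mu}{C_0}\sum_e\frac1{h_e}\|\boldsymbol{v}_h\cdot\boldsymbol{n}\|_{0,e}^2 .
\]
Each element has a bounded number of Navier facets, so that multiplicity constant is absorbed into $C_5$.

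Combining the two bounds yields
\[
A_h(\boldsymbol{v}_h,\boldsymbol{v}_h)\ge(\xi-\mu C_5^2C_0)\|\nabla\boldsymbol{v}_h\|^2+\Big(\gamma-\frac{\mu}{C_0}\Big)\sum_e\frac1{h_e}\|\boldsymbol{v}_h\cdot\boldsymbol{n}\|_{0,e}^2\ge C_S\|\boldsymbol{v}_h\|_{1,h}^2 .
\]
Positivity of $C_S$ follows from the stated choices: $C_0<\xi/C_5^2$ makes the first coefficient positive, and $\gamma\ge\gamma_0>\mu/C_0$ makes the second positive. All constants are independent of $h$ because $C_5$ depends only on shape regularity and the polynomial degree. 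Note that membership in $Z_h$ is not actually used beyond inheriting the continuous coercivity constant; the estimate holds on all of $\boldsymbol{V}_h$ once the first step is justified.
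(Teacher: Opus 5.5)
Your argument is correct and is the standard one behind the result the paper cites: expand $A_h(\boldsymbol{v}_h,\boldsymbol{v}_h)$, then bound $2a_c^\partial(\boldsymbol{v}_h,\boldsymbol{v}_h)$ facet by facet using the discrete trace inequality and Young's inequality with weight $C_0$, which gives exactly the two coefficients $\xi-\mu C_5^2C_0$ and $\gamma-\mu/C_0$. The only step to justify more carefully is the first one: rather than the vague ``transfer'' of \Cref{lma1.2} (whose full-norm version may rely on $\boldsymbol{v}\cdot\boldsymbol{n}=0$), note that $\xi\le\mu$ by testing \Cref{lma1.2} with a nonzero solenoidal field compactly supported in $\Omega$, so $a(\boldsymbol{v}_h,\boldsymbol{v}_h)+a_\tau^\partial(\boldsymbol{v}_h,\boldsymbol{v}_h)\ge\mu\|\nabla\boldsymbol{v}_h\|^2\ge\xi\|\nabla\boldsymbol{v}_h\|^2$ on all of $\boldsymbol{V}_h$, which also confirms your remark that $Z_h$ is not needed.
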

\begin{proof}
    See \cite[Lemma 8]{bansal2024nitsche}.
\end{proof}
Next, we present the discrete version of inf-sup condition for the bilinear form $B_h$.
\begin{lemma}\label{lma2.7}
There exists a constant $\hat{\theta}>0$, independent of $h$, such that
\[
\sup_{0 \neq \boldsymbol{v}_h \in \boldsymbol{V}_h}
\frac{|B_h(\boldsymbol{v}_h,q_h)|}{\|\boldsymbol{v}_h\|_{1,h}}
\ge \hat{\theta}\,\|q_h\|,
\qquad \forall\, q_h \in Q_h.
\]
\end{lemma}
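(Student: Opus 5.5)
The plan is to reduce the discrete inf--sup condition for $B_h$ to the classical discrete inf--sup condition of the Taylor--Hood pair $[\mathbb{P}_{k+1}]^n\times\mathbb{P}_k$ with full homogeneous Dirichlet data. We restrict the supremum to the subspace
\[
\boldsymbol{V}_{h,0}:=\{\boldsymbol{v}_h\in\boldsymbol{V}_h:\ \boldsymbol{v}_h=\boldsymbol{0}\ \text{on }\Gamma\}\subset \boldsymbol{V}_h .
\]
On $\boldsymbol{V}_{h,0}$ the boundary contribution $b^{\partial}(\boldsymbol{v}_h,q_h)$ vanishes, because $\boldsymbol{n}\cdot\boldsymbol{v}_h=0$ on every $e\in\mathcal{E}_{\mathrm{Nav}}$. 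The discrete norm also collapses, since $\|\boldsymbol{v}_h\|_{1,h}=\|\nabla\boldsymbol{v}_h\|$ there. Consequently
\[
\sup_{0\neq\boldsymbol{v}_h\in\boldsymbol{V}_h}\frac{|B_h(\boldsymbol{v}_h,q_h)|}{\|\boldsymbol{v}_h\|_{1,h}}
\;\ge\;
\sup_{0\neq\boldsymbol{v}_h\in\boldsymbol{V}_{h,0}}\frac{|(\operatorname{div}\boldsymbol{v}_h,q_h)|}{\|\nabla\boldsymbol{v}_h\|}.
\]

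The right-hand side is exactly the inf--sup quotient of the Taylor--Hood element on $\boldsymbol{H}^1_0(\Omega)\times L^2_0(\Omega)$. Here $Q_h\subset L^2_0(\Omega)$ consists of continuous $\mathbb{P}_k$ functions with $k\ge1$, and $\boldsymbol{V}_{h,0}$ is the continuous $\mathbb{P}_{k+1}$ velocity space. The classical result (Verf\"urth for $k=1$, Boffi and Brezzi--Falk for general $k$; see \cite{boffi2013mixed}) applies under a mild mesh assumption, for instance that every simplex has at least one interior vertex. It gives a constant $\tilde\theta>0$, independent of $h$, with
\[
\sup_{0\neq\boldsymbol{v}_h\in\boldsymbol{V}_{h,0}}\frac{(\operatorname{div}\boldsymbol{v}_h,q_h)}{\|\nabla\boldsymbol{v}_h\|}\ge\tilde\theta\|q_h\| .
\]
Taking $\hat\theta=\tilde\theta$ then concludes the proof.

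An alternative route avoids the mesh assumption and stays closer to \Cref{lma1.3}. Given $q_h$, pick $\boldsymbol{v}\in\boldsymbol{H}^1_0(\Omega)$ with $-\operatorname{div}\boldsymbol{v}=q_h$ and $\|\boldsymbol{v}\|_1\le\theta^{-1}\|q_h\|$. Apply a Scott--Zhang interpolant preserving homogeneous boundary values to obtain $\boldsymbol{v}_h$, and use Verf\"urth's trick. That trick bounds $(q_h,\operatorname{div}(\boldsymbol{v}-\boldsymbol{v}_h))$ by $\|\boldsymbol{v}\|_1\,(\sum_K h_K^2\|\nabla q_h\|_K^2)^{1/2}$ after integrating by parts elementwise, which is valid since $q_h$ is continuous. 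The $h$-weighted gradient term is then absorbed using a second test function of bubble/edge type.

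The main obstacle is this absorption step, or equivalently the verification of the mesh condition for the Taylor--Hood inf--sup. I would simply invoke the known Taylor--Hood result from \cite{boffi2013mixed}, as \cite{bansal2024nitsche} does. The Nitsche-specific features, namely the boundary term $b^{\partial}$ and the penalty part of $\|\cdot\|_{1,h}$, are harmless because the subspace restriction eliminates both.
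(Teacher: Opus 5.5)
Your argument is correct. The paper gives no proof of its own here, only a pointer to \cite[Lemma~9]{bansal2024nitsche}, so your reduction supplies a self-contained argument in its place.

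On $\boldsymbol{V}_h\cap\boldsymbol{H}^1_0(\Omega)$, both $b^{\partial}(\cdot,q_h)$ and the penalty part of $\|\cdot\|_{1,h}$ vanish. The supremum over $\boldsymbol{V}_h$ therefore dominates the Dirichlet Taylor--Hood quotient, and $\hat\theta$ can be taken to be the Taylor--Hood constant. What your version makes visible, and the paper's statement omits, is that the lemma then holds under the usual Taylor--Hood mesh hypothesis (e.g.\ every simplex has an interior vertex), not on an arbitrary shape-regular mesh.

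One correction to your side remark: the Verf\"urth-trick route does \emph{not} avoid this hypothesis. Your Scott--Zhang interpolant still lies in $\boldsymbol{V}_h\cap\boldsymbol{H}^1_0(\Omega)$. The step that absorbs $\bigl(\sum_K h_K^2\|\nabla q_h\|_{0,K}^2\bigr)^{1/2}$ using edge-type test functions is exactly where a condition on boundary simplices enters. For $\mathbb{P}_2$--$\mathbb{P}_1$ in two dimensions, for instance, each triangle needs two interior edges, so that $\nabla q_h|_K$ can be recovered from tangential derivatives along them. So both of your routes rest on the same mesh assumption, and your remark that the absorption step is ``equivalently'' the mesh condition is the accurate description.
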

\begin{proof}
    See  \cite[Lemma 9]{bansal2024nitsche}.
\end{proof}

We introduce the bilinear form
\begin{equation}\label{eqch}
\mathcal{C}_h(\boldsymbol{u}_h,p_h,\psi_h;\boldsymbol{v}_h,q_h,\phi_h)
= A_h(\boldsymbol{u}_h,\boldsymbol{v}_h) + B_h(\boldsymbol{u}_h,q_h) + B_h(\boldsymbol{v}_h,p_h) + d(\psi_h,\phi_h).
\end{equation}

\begin{theorem}\label{thm2.8}
There exists a positive constant $\hat{\alpha} = \hat{\alpha}(C_S,\hat{\theta})$ such that
\[
\sup_{0 \neq (\boldsymbol{v}_h,q_h,\phi_h)\in \boldsymbol{V}_h \times Q_h \times \Phi_h}
\frac{\mathcal{C}_h(\boldsymbol{u}_h,p_h,\psi_h;\boldsymbol{v}_h,q_h,\phi_h)}
{\|(\boldsymbol{v}_h,q_h,\phi_h)\|}
\ge \hat{\alpha}\,\|(\boldsymbol{u}_h,p_h,\psi_h)\|,
\qquad
\forall\, (\boldsymbol{u}_h,p_h,\psi_h)\in \boldsymbol{V}_h \times Q_h, \times \Phi_h
\]
Here, $C_S$ and $\hat{\theta}$ denote the coercivity and inf-sup stability
constants, respectively.
\end{theorem}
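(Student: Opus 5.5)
The form $\mathcal{C}_h$ decouples: the velocity--pressure block $A_h + B_h + B_h$ is a saddle-point form, and the potential block is $d(\psi_h,\phi_h)$ alone. I would therefore establish two separate inf-sup conditions and then combine them. The velocity--pressure one comes from the discrete Babu\v{s}ka--Brezzi theory, fed by \Cref{lma2.5} (boundedness of $A_h$ and $B_h$ in $\|\cdot\|_{1,h}$), \Cref{lma2.6} (coercivity of $A_h$ on $Z_h$ with constant $C_S$) and \Cref{lma2.7} (inf-sup for $B_h$ with constant $\hat\theta$). The potential one is immediate from the coercivity of $d$ on $\Phi_h$.

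\textbf{Velocity--pressure block.} Fix $(\boldsymbol{u}_h,p_h)$. I would build a test pair constructively, in the usual way.

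First, \Cref{lma2.7} with the finite dimensionality of $\boldsymbol{V}_h$ gives $\boldsymbol{w}_h\in\boldsymbol{V}_h$ with
\[
B_h(\boldsymbol{w}_h,p_h)=\|p_h\|^2, \qquad \|\boldsymbol{w}_h\|_{1,h}\le \hat\theta^{-1}\|p_h\|.
\]
The sign convention requires care here, since $B_h$ is sign-indefinite; replace $\boldsymbol{w}_h$ by $-\boldsymbol{w}_h$ if needed.

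Second, split $\boldsymbol{u}_h=\boldsymbol{u}_0+\boldsymbol{u}_\perp$ with $\boldsymbol{u}_0\in Z_h$, where $\boldsymbol{u}_\perp$ is in the $\|\cdot\|_{1,h}$-orthogonal complement of $Z_h$. By \Cref{lma2.7} the operator induced by $B_h$ is injective from $Z_h^\perp$ onto the dual of $Q_h$, so
\[
\|\boldsymbol{u}_\perp\|_{1,h}\le \hat\theta^{-1}\sup_{q_h}\frac{B_h(\boldsymbol{u}_h,q_h)}{\|q_h\|}.
\]
This yields $q_h^\ast$ with $B_h(\boldsymbol{u}_h,q_h^\ast)\gtrsim \hat\theta\|\boldsymbol{u}_\perp\|_{1,h}\|q_h^\ast\|$.

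Third, take the test function $(\boldsymbol{v}_h,q_h)=(\boldsymbol{u}_h+\delta\boldsymbol{w}_h,\,-p_h+\delta' q_h^\ast)$ with small $\delta,\delta'>0$. Expanding, the pressure cross terms cancel and $A_h(\boldsymbol{u}_0,\boldsymbol{u}_0)\ge C_S\|\boldsymbol{u}_0\|_{1,h}^2$. The remaining terms $A_h(\boldsymbol{u}_h,\boldsymbol{w}_h)$, $A_h(\boldsymbol{u}_0,\boldsymbol{u}_\perp)$, etc., are absorbed by Young's inequality using the bound $C_6$.

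Rather than redo this calculation, I would cite Proposition~2.36 of \cite{ern2004theory} (or \cite[Thm.~4.2.3]{boffi2013mixed}), as in the proof of \Cref{thm1.4}. This gives a constant $\alpha_1=\alpha_1(C_S,\hat\theta,C_6)$, independent of $h$, such that the saddle-point form satisfies an inf-sup condition in the norm $\|\boldsymbol{v}_h\|_{1,h}+\|q_h\|$.

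\textbf{Potential block.} Since $\Phi_h\subset H_0^1(\Omega)$, we have $d(\psi_h,\psi_h)=\varepsilon\|\nabla\psi_h\|^2$. Testing with $\phi_h=\psi_h$ gives the inf-sup constant $\varepsilon$.

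\textbf{Combining.} Given $(\boldsymbol{u}_h,p_h,\psi_h)$, choose $(\boldsymbol{v}_h^1,q_h^1)$ normalized to unit norm that realizes the velocity--pressure inf-sup up to a factor $1/2$. Set $\phi_h=\psi_h/\|\nabla\psi_h\|$, and test with $(\boldsymbol{v}_h^1,q_h^1,\phi_h)$. Because the blocks are decoupled there are no cross terms: the numerator is at least $\tfrac12\alpha_1(\|\boldsymbol{u}_h\|_{1,h}+\|p_h\|)+\varepsilon\|\nabla\psi_h\|$, and the denominator is at most $2$. Equivalence of the $\ell^1$ and $\ell^2$ combinations of the component norms, which are the ones appearing in \eqref{norm}, then gives $\hat\alpha\simeq\min\{\alpha_1,\varepsilon\}$.

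\textbf{Main obstacle.} The only delicate point is making sure that the abstract saddle-point result applies with $h$-independent constants. This requires the boundedness of $A_h$ and $B_h$ in the mesh-dependent norm $\|\cdot\|_{1,h}$ (\Cref{lma2.5}, which rests on the inverse and trace estimates), and the fact that the kernel used in \Cref{lma2.6} is exactly $Z_h$. If the citation is deemed insufficient, I would carry out the explicit test-function construction above, choosing $\delta=C_S/C_6^2$ or similar.
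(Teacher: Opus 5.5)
Your proposal is correct and follows the same route as the paper, which simply invokes Proposition~2.36 of \cite{ern2004theory} with the boundedness from \Cref{lma2.5}, the kernel coercivity from \Cref{lma2.6} and the inf-sup condition of \Cref{lma2.7}; you additionally spell out the decoupled block $d(\psi_h,\phi_h)$ and the combination step, which the paper leaves implicit, and your bookkeeping correctly shows that $\hat{\alpha}$ also depends on $C_6$ and $\varepsilon$, not only on $C_S$ and $\hat{\theta}$. One small correction to your explicit fallback: if you normalize $q_h^\ast$ by $\|q_h^\ast\| = \|\boldsymbol{u}_\perp\|_{1,h}$, its coefficient $\delta'$ must be taken large, bounded below by a multiple of $(C_6 + C_6^2/C_S)/\hat{\theta}$, rather than small, because it has to dominate $A_h(\boldsymbol{u}_\perp,\boldsymbol{u}_\perp)$ and the cross term $2A_h(\boldsymbol{u}_0,\boldsymbol{u}_\perp)$; only $\delta$ needs to be small, of order $C_S\hat{\theta}^2/C_6^2$.
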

\begin{proof}
Owing to \Cref{lma2.5}, the bilinear form $\mathcal{C}_h(\cdot\,;\cdot)$ is uniformly bounded.
Furthermore, the discrete ellipticity of $A_h$ on $Z_h$ (\Cref{lma2.6}) and the discrete
inf-sup condition for $B_h$ (\Cref{lma2.7}), together with Proposition~2.36 in~\cite{ern2004theory},
imply that the stated inf-sup condition is satisfied.
\end{proof}

\subsection{Well-posedness}
We apply Banach’s fixed-point theorem in combination with the Babu\v{s}ka–Brezzi theory \cite{boffi2013mixed} and the Minty–Browder theorem \cite{ciarlet2025linear}, closely following the approach in \cite{alsohaim2024analysis}. The incompressible flow equations are first decoupled from the nonlinear transport equation. Each subproblem is then analyzed separately, and the full coupled system is recovered by reconnecting the two through Banach’s fixed-point argument. First we define a set
\begin{align}\label{eqz}
    X = \{ \widehat{\psi}\in \Phi_h \,\colon \,\|\widehat{\psi}\|_1\leq C_S[2\bar{E}C_{\mathrm{Sob}}^2]^{-1}\,\},
\end{align}
for a fixed $\widehat{\psi}_h\in X$, we have to $(\boldsymbol{u}_h,p_h)\in \boldsymbol{V}_h\times Q_h$ such that
\begin{equation}\label{eqahuh}
\begin{aligned}
A_h(\boldsymbol{u}_h,\boldsymbol{v}_h)
+ B_h(\boldsymbol{v}_h,p_h)
+ c_1(\boldsymbol{u}_h;\widehat{\psi}_h, \boldsymbol{v}_h)
&= F_h^{\widehat{\psi}_h}(\boldsymbol{v}_h),
&& \forall \boldsymbol{v}_h \in \boldsymbol{V}_h,  \\
B_h(\boldsymbol{u}_h,q_h)
&= 0,
&& \forall q_h \in Q_h, 
\end{aligned}
\end{equation}
where
\begin{align*}
    F_h^{\widehat{\psi}_h}(\boldsymbol{v}_h) =
\int_\Omega \{ \boldsymbol{f} + [g - \mathcal{K}({\widehat{\psi}_h})]\boldsymbol{E} \} \cdot \boldsymbol{v}_h,
\end{align*}
Now using \Cref{lma2.5,lma2.6} along with the definition of $X$ given by \eqref{eqz}, we have the following boundedness and coercive properties
\begin{equation}\label{eqahcb}
    \begin{aligned}
    |A_h(\boldsymbol{u}_h,\boldsymbol{v}_h)
+ c_1(\boldsymbol{u}_h;\widehat{\psi}_h, \boldsymbol{v}_h)| &\leq (C_6+ \frac{C_S}{2})\|\boldsymbol{u}_h\|_{1,h}\|\boldsymbol{v}_h\|_{1,h}, \\
|A_h(\boldsymbol{v}_h,\boldsymbol{v}_h)
+ c_1(\boldsymbol{v}_h;\widehat{\psi}_h, \boldsymbol{v}_h)| &\geq \frac{C_S}{2}\|\boldsymbol{v}_h\|_{1,h}^2 .
\end{aligned}
\end{equation}
By using the continuity and boundedness properties of $B_h(\cdot\,,\cdot)$ established in \Cref{lma2.5,lma2.7}, and applying \cite[Theorem~4.2.3]{boffi2013mixed}, we conclude that the following operator
\[
\mathcal{S}^{\mathrm{flow}} : \Phi_h \to \boldsymbol{V}_h \times Q_h, 
\qquad 
\widehat{\psi}_h \mapsto \mathcal{S}^{\mathrm{flow}}(\widehat{\psi}_h) = \bigl(\mathcal{S}_1^{\mathrm{flow}}(\widehat{\psi}_h),\, \mathcal{S}_2^{\mathrm{flow}}(\widehat{\psi}_h)\bigr) = (\boldsymbol{u}_h,p_h)
\]
is well-defined. Moreover we have
\begin{align}\label{equhfg}
    \|\boldsymbol{u}_h\|_{1,h} \leq \frac{C_p}{C_S}\Big(\|\boldsymbol{f}\|+\bar{E}\|g\|+ \bar{K}\bar{E}\|\widehat{\psi}_h\|_1\Big).
\end{align}
Now for the second part, we define the set 
\begin{align}
    \boldsymbol{W} = \{ \, \widehat{\boldsymbol{u}}_h\in \boldsymbol{V}_h \,\colon\,\|\widehat{\boldsymbol{u}}_h\|_{1,h} \le \varepsilon [2C^2_{\mathrm{Sob}}(1+C_p^2) ]^{-1} \, \},
\end{align}
so for a fix $\widehat{\boldsymbol{u}}_h \in \boldsymbol{W}$, we have to find $\psi_h\in \Phi_h$ such that
\begin{align}\label{eqp2}
    (\mathcal{K}(\psi_h),\phi_h) + \boldsymbol{c}_2(\widehat{\boldsymbol{u}}_h;\psi_h,\phi_h) + d(\psi_h,\phi_h)
= G(\phi_h),
\qquad \forall \phi_h \in \Phi_h.
\end{align}
using Cauchy-Schwarz inequality and \Cref{lma2.5}, we have 
\begin{align*}
    | (\mathcal{K}(\psi_h),\phi_h) + \boldsymbol{c}_2(\widehat{\boldsymbol{u}}_h;\psi_h,\phi_h) + d(\psi_h,\phi_h) | \le \Big(\bar{K} + \frac{\varepsilon}{2(1+C_p^2)} + \varepsilon\Big)\|\psi_h\|_1\|\phi_h\|_1.
\end{align*}
Also using the coercivity of $d(\cdot\,,\cdot)$, definition of set $\boldsymbol{W}$ and \ref{(A2)}, we obtain the following strong monotonicity
\begin{align}\label{eqkpsi}
    (\mathcal{K}(\psi_{1_h})-\mathcal{K}(\psi_{2_h}),\psi_{1_h}-\psi_{2_h})   & + \boldsymbol{c}_2(\widehat{\boldsymbol{u}}_h;\psi_{1_h}-\psi_{2_h},\psi_{1_h}-\psi_{2_h}) + d(\psi_{1_h}-\psi_{2_h},\psi_{1_h}-\psi_{2_h}) \nonumber \\
    & \ge \frac{\varepsilon}{2(1+C_p^2)}\|\psi_{1_h}-\psi_{2_h}\|^2_1, \qquad     \forall \psi_{1_h},\psi_{2_h} \in \Phi_h.
\end{align}
By  the Minty–Browder theorem \cite[Theorem 9.14-1]{ciarlet2025linear}, there exist a unique $\psi_h\in \Phi_h$ that satisfy \eqref{eqp2} and the map 
\[
\mathcal{S}^{\mathrm{elec}} : \boldsymbol{V}_h \to \Phi_h,
\qquad
\widehat{u}_h \mapsto \mathcal{S}^{\mathrm{elec}}(\widehat{u}_h) = \psi_h
\]
is well-defined. Additionally, using \eqref{eqkpsi}, we have
\begin{align}
    \|\psi_h\|_1 \le 2\varepsilon^{-1}(1+C_p^2)\|g\|.\label{eqpsig}
\end{align}
We now introduce an operator $\boldsymbol{T}$, which is equivalent to the solution map of \eqref{ns2} and is defined as:
\[
\boldsymbol{T} : \boldsymbol{W} \to \boldsymbol{W},
\qquad 
\widehat{\boldsymbol{u}} \mapsto \boldsymbol{T}(\widehat{\boldsymbol{u}_h})
= \mathcal{S}_1^{\mathrm{flow}}\!\bigl(\mathcal{S}^{\mathrm{elec}}(\widehat{\boldsymbol{u}}_h)\bigr).
\]
\begin{lemma}\label{lma2.9}
  Assume  \ref{(A1)}--\ref{(A3)}s, and let $\boldsymbol{f}\in \boldsymbol{L}^2(\Omega)$ and $g \in L^2(\Omega)$ satisfy the small-data assumption
    \begin{align}\label{smc}
       \max\left\{ 4, 2C_p \right\}\frac{C_{\mathrm{Sob}}^2(1+C_P^2)}{\varepsilon C_S}(1+\bar{E}+ 2\bar{K}\bar{E}(1+C_P^2)\varepsilon^{-1})(\|\boldsymbol{f}\|+\|g\|) \le 1.
    \end{align}
    Then the operator $\boldsymbol{T}$ is well defined and $\boldsymbol{T}(\boldsymbol{W})\subseteq \boldsymbol{W}.$
\end{lemma}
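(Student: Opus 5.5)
The proof is a composition argument: we check that each factor in $\boldsymbol{T}=\mathcal{S}_1^{\mathrm{flow}}\circ\mathcal{S}^{\mathrm{elec}}$ is applied to an admissible argument, and then that the output lands back in $\boldsymbol{W}$.

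\textbf{Step 1: the electrostatic factor.} Fix $\widehat{\boldsymbol{u}}_h\in\boldsymbol{W}$. The Minty--Browder argument preceding the lemma (continuity plus the strong monotonicity \eqref{eqkpsi}) gives a unique $\psi_h=\mathcal{S}^{\mathrm{elec}}(\widehat{\boldsymbol{u}}_h)\in\Phi_h$. It obeys the a priori bound \eqref{eqpsig}, namely $\|\psi_h\|_1\le 2\varepsilon^{-1}(1+C_p^2)\|g\|$.

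\textbf{Step 2: $\psi_h$ lies in $X$.} The flow solver $\mathcal{S}^{\mathrm{flow}}$ is only known to be well defined on $X$, because the coercivity in \eqref{eqahcb} used $\|\widehat{\psi}_h\|_1\le C_S[2\bar E C_{\mathrm{Sob}}^2]^{-1}$. So I need
\[
2\varepsilon^{-1}(1+C_p^2)\|g\|\le \frac{C_S}{2\bar E C_{\mathrm{Sob}}^2},
\quad\text{i.e.}\quad
\frac{4\bar E C_{\mathrm{Sob}}^2(1+C_p^2)}{\varepsilon C_S}\|g\|\le 1 .
\]
This follows from \eqref{smc}. Use the factor $\max\{4,2C_p\}\ge 4$, and note that $1+\bar E+2\bar K\bar E(1+C_p^2)\varepsilon^{-1}\ge \bar E$, so the left side of \eqref{smc} dominates the display above. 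Hence $\psi_h\in X$, and $\mathcal{S}^{\mathrm{flow}}(\psi_h)=(\boldsymbol{u}_h,p_h)$ exists uniquely by the Babu\v{s}ka--Brezzi argument already given. Therefore $\boldsymbol{T}$ is well defined.

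\textbf{Step 3: the image lies in $\boldsymbol{W}$.} Insert \eqref{eqpsig} into \eqref{equhfg}:
\[
\|\boldsymbol{u}_h\|_{1,h}\le \frac{C_p}{C_S}\Big(\|\boldsymbol{f}\|+\bar E\|g\|+2\bar K\bar E(1+C_p^2)\varepsilon^{-1}\|g\|\Big)
\le \frac{C_p}{C_S}\big(1+\bar E+2\bar K\bar E(1+C_p^2)\varepsilon^{-1}\big)(\|\boldsymbol{f}\|+\|g\|).
\]
We need this to be at most $\varepsilon[2C_{\mathrm{Sob}}^2(1+C_p^2)]^{-1}$. Multiplying through, that is exactly
\[
\frac{2C_p C_{\mathrm{Sob}}^2(1+C_p^2)}{\varepsilon C_S}\big(1+\bar E+2\bar K\bar E(1+C_p^2)\varepsilon^{-1}\big)(\|\boldsymbol{f}\|+\|g\|)\le 1,
\]
which is \eqref{smc} with the factor $2C_p\le\max\{4,2C_p\}$. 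Thus $\boldsymbol{T}(\boldsymbol{W})\subseteq\boldsymbol{W}$.

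\textbf{Main obstacle.} The delicate point is Step 2, namely ensuring that $\mathcal{S}^{\mathrm{elec}}$ maps into the set $X$ on which the flow coercivity \eqref{eqahcb} holds. This is where the constant $4$ in \eqref{smc} is used. Otherwise the argument is bookkeeping: the two branches of the maximum in \eqref{smc} are designed to cover exactly these two requirements.
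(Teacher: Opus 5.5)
Your proof is correct and follows the paper's argument essentially step for step. You use the factor $4$ in \eqref{smc}, together with $\bar E\le 1+\bar E+2\bar K\bar E(1+C_p^2)\varepsilon^{-1}$, so that \eqref{eqpsig} places $\mathcal{S}^{\mathrm{elec}}(\widehat{\boldsymbol{u}}_h)$ in $X$; you then use the factor $2C_p$, after inserting \eqref{eqpsig} into \eqref{equhfg}, to show the image lies in $\boldsymbol{W}$, and you are slightly more explicit than the paper about which branch of the maximum is used where.
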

\begin{proof}
   For $\widehat{\boldsymbol{u}} \in \boldsymbol{W}$, the small data assumption implies that
\begin{align*}
    \frac{4\bar{E}C_{\mathrm{Sob}}^2(1+C_P^2)}{\varepsilon C_S}\|g\| \leq 1.
\end{align*}
Combining this with \eqref{eqpsig}, we obtain
$
    \|\mathcal{S}^{\mathrm{elec}}(\widehat{\boldsymbol{u}}_h)\|_{1}
\le 
C_S\bigl(2\bar{E}C_{\mathrm{Sob}}^2\bigr)^{-1},
$
which shows that $\mathcal{S}^{\mathrm{elec}}(\widehat{\boldsymbol{u}}_h) \in X$. 
Since both $\mathcal{S}^{\mathrm{elec}}$ and $\mathcal{S}^{\mathrm{flow}}$ are well defined, it follows that the operator $\boldsymbol{T}$ is well defined on $\boldsymbol{W}$.
Moreover, from \labelcref{equhfg,eqpsig} and the smallness condition \eqref{smc}, we obtain
\begin{align*}
    \|\mathcal{S}_1^{\mathrm{flow}}(\mathcal{S}^{\mathrm{elec}}(\widehat{\boldsymbol{u}}))\|_{1,h} \,
    \le \frac{C_p}{C_S}\Big(\|f\|+\bar{E}\|g\|\,+\, 2\varepsilon^{-1}\bar{K}\bar{E}(1+C_p^2)\|g\|\Big) 
    \le \varepsilon\Big(2C^2_{\mathrm{Sob}}(1+C_p^2)\Big)^{-1}.
\end{align*}
Hence we prove that $\boldsymbol{T}(\boldsymbol{W})\subseteq \boldsymbol{W}$.
\end{proof}
\begin{theorem}
Assume that \ref{(A1)}--\ref{(A3)} and \eqref{smc} hold. Moreover, suppose that
\begin{align}\label{smc2}
    \frac{4\bar{E}C^2_{\mathrm{Sob}}}{C_S\varepsilon^2}(1+C_p^2)\Big[\varepsilon+ 2\bar{K}(1+C_p^2)\Big]\|g\| < 1.
\end{align}
Then the operator $\boldsymbol{T}$ admits a unique fixed point $\boldsymbol{u}_h \in \boldsymbol{W}$. 
Equivalently, problem \eqref{ns2} admits a unique solution 
$(\boldsymbol{u}_h, p_h, \psi_h) \in \boldsymbol{V}_h \times Q_h \times \Phi_h$, 
with $\boldsymbol{u}_h \in \boldsymbol{W}$.
\end{theorem}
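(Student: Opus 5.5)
By \Cref{lma2.9}, $\boldsymbol{T}$ maps the closed set $\boldsymbol{W}$ (a closed ball in the finite-dimensional space $\boldsymbol{V}_h$, hence complete) into itself. By Banach's fixed-point theorem it therefore suffices to show that $\boldsymbol{T}$ is a strict contraction under \eqref{smc2}. Equivalence with \eqref{ns2} then follows from the construction. If $\boldsymbol{u}_h=\boldsymbol{T}(\boldsymbol{u}_h)$, set $\psi_h=\mathcal{S}^{\mathrm{elec}}(\boldsymbol{u}_h)$ and $p_h=\mathcal{S}_2^{\mathrm{flow}}(\psi_h)$; this triple solves \eqref{ns2}. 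Conversely, any solution with $\boldsymbol{u}_h\in\boldsymbol{W}$ is a fixed point, by uniqueness of the two subproblems. So the plan reduces to a Lipschitz bound in two stages.

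\emph{Step 1 (Lipschitz bound for $\mathcal{S}^{\mathrm{elec}}$).} Take $\widehat{\boldsymbol{u}}_1,\widehat{\boldsymbol{u}}_2\in\boldsymbol{W}$ with $\psi_i=\mathcal{S}^{\mathrm{elec}}(\widehat{\boldsymbol{u}}_i)$. Subtract the two versions of \eqref{eqp2} and test with $\phi_h=\psi_1-\psi_2$. Write
\[
c_2(\widehat{\boldsymbol{u}}_1;\psi_1,\phi)-c_2(\widehat{\boldsymbol{u}}_2;\psi_2,\phi)=c_2(\widehat{\boldsymbol{u}}_1;\psi_1-\psi_2,\phi)+c_2(\widehat{\boldsymbol{u}}_1-\widehat{\boldsymbol{u}}_2;\psi_2,\phi).
\]
The first term, together with the $\mathcal{K}$- and $d$-differences, is bounded below by the strong monotonicity \eqref{eqkpsi}. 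The second term is bounded by \Cref{lma2.5} and the a priori bound \eqref{eqpsig}. This gives
\[
\|\psi_1-\psi_2\|_1\le \tfrac{2(1+C_p^2)}{\varepsilon}C_{\mathrm{Sob}}^2\,\|\psi_2\|_1\|\widehat{\boldsymbol{u}}_1-\widehat{\boldsymbol{u}}_2\|_{1,h}\le \tfrac{4(1+C_p^2)^2C_{\mathrm{Sob}}^2}{\varepsilon^2}\|g\|\,\|\widehat{\boldsymbol{u}}_1-\widehat{\boldsymbol{u}}_2\|_{1,h}.
\]

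\emph{Step 2 (Lipschitz bound for $\mathcal{S}^{\mathrm{flow}}_1$).} Take $\widehat\psi_1,\widehat\psi_2\in X$ with $(\boldsymbol{u}_i,p_i)=\mathcal{S}^{\mathrm{flow}}(\widehat\psi_i)$. Both $\boldsymbol{u}_i$ lie in $Z_h$, so $\boldsymbol{e}=\boldsymbol{u}_1-\boldsymbol{u}_2\in Z_h$. Testing the difference of \eqref{eqahuh} with $\boldsymbol{v}_h=\boldsymbol{e}$ kills the pressure term. Now split
\[
c_1(\boldsymbol{u}_1;\widehat\psi_1,\boldsymbol{e})-c_1(\boldsymbol{u}_2;\widehat\psi_2,\boldsymbol{e})=c_1(\boldsymbol{e};\widehat\psi_1,\boldsymbol{e})+c_1(\boldsymbol{u}_2;\widehat\psi_1-\widehat\psi_2,\boldsymbol{e}).
\]
Use the coercivity \eqref{eqahcb} (constant $C_S/2$) on the left, and on the right \Cref{lma2.5} together with the Lipschitz property \ref{(A2)} of $\mathcal{K}$ in $F_h^{\widehat\psi}$, combined with a Poincaré constant. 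This yields
\[
\tfrac{C_S}{2}\|\boldsymbol{e}\|_{1,h}\le \big(\bar E C_{\mathrm{Sob}}^2\|\boldsymbol{u}_2\|_{1,h}+\bar K\bar E C_p\big)\|\widehat\psi_1-\widehat\psi_2\|_1 .
\]
Since $\boldsymbol{u}_2\in\boldsymbol{W}$, we have $C_{\mathrm{Sob}}^2\|\boldsymbol{u}_2\|_{1,h}\le \varepsilon/(2(1+C_p^2))$, so the bracket is bounded by a constant of the form $\bar E[\varepsilon+2\bar K(1+C_p^2)]/(2(1+C_p^2))$, up to Poincaré factors.

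\emph{Step 3 (composition).} Chaining Steps 1 and 2 gives
\[
\|\boldsymbol{T}(\widehat{\boldsymbol{u}}_1)-\boldsymbol{T}(\widehat{\boldsymbol{u}}_2)\|_{1,h}\le L\|\widehat{\boldsymbol{u}}_1-\widehat{\boldsymbol{u}}_2\|_{1,h},
\qquad
L\simeq \frac{4\bar E C_{\mathrm{Sob}}^2}{C_S\varepsilon^2}(1+C_p^2)\big[\varepsilon+2\bar K(1+C_p^2)\big]\|g\|.
\]
Hypothesis \eqref{smc2} then gives $L<1$, and Banach's theorem finishes the proof.

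The main obstacle is the constant bookkeeping. One must make sure the $\|\boldsymbol{u}_2\|$ and $\|\psi_2\|$ factors are controlled by the radii of $\boldsymbol{W}$ and $X$ and by \eqref{eqpsig}, and that the resulting product of constants matches \eqref{smc2} exactly. A second point needing care is that the monotonicity in Step 1 uses the trilinear term with the fixed $\widehat{\boldsymbol{u}}_1\in\boldsymbol{W}$, which is precisely why \eqref{eqkpsi} applies. If the Poincaré factors do not match the stated constant exactly, I would absorb them into the generic $C_p$, as the paper already does.
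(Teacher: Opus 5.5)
Your proposal is correct and follows the paper's proof essentially step for step. It uses the same add-and-subtract splittings of $c_2$ and $c_1$ to get the Lipschitz bounds \eqref{eqseu} and \eqref{eqs1psi}, then \eqref{eqpsig} and $\boldsymbol{T}(\boldsymbol{W})\subseteq\boldsymbol{W}$ to reduce the product of constants to the left-hand side of \eqref{smc2}, and finally Banach's theorem.

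The Poincaré factor you carry on the $\bar K$ term is legitimate; the paper's \eqref{eqs1psi} omits it. So your caveat about absorbing it into $C_p$ is the right one, not a flaw. Your explicit check that fixed points of $\boldsymbol{T}$ correspond to solutions of \eqref{ns2} with $\boldsymbol{u}_h\in\boldsymbol{W}$ fills in a step the paper leaves implicit.
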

\begin{proof}
    Let $\widehat{\boldsymbol{u}}_{1_h}, \widehat{\boldsymbol{u}}_{2_h} \in \boldsymbol{W}$ and let $\psi_{1_h}, \psi_{2_h} \in \Phi_h$ satisfy
\[
\mathcal{S}^{\mathrm{elec}}(\widehat{\boldsymbol{u}}_{1_h}) = \psi_{1_h},
\qquad
\mathcal{S}^{\mathrm{elec}}(\widehat{\boldsymbol{u}}_{2_h}) = \psi_{2_h}.
\]
Then, from \eqref{eqp2}, we obtain
\begin{align*}
        (\mathcal{K}(\psi_{1_h}) - \mathcal{K}(\psi_{2_h}),\phi_h) 
        + \boldsymbol{c}_2(\widehat{\boldsymbol{u}}_{1_h};\psi_{1_h},\phi_h) 
        + \boldsymbol{c}_2(\widehat{\boldsymbol{u}}_{2_h};\psi_{2_h},\phi_h) 
        + d(\psi_{1_h} - \psi_{2_h},\phi_h)
        = 0     \qquad \forall \phi_h \in \Phi_h,
    \end{align*}
adding and subtracting 
$\boldsymbol{c}_2(\widehat{\boldsymbol{u}}_{1_h};\psi_{2_h},\phi_h)$ 
and choosing $\phi_h = \psi_{1_h} - \psi_{2_h}$, we obtain
\begin{align*}
        (\mathcal{K}(\psi_{1_h}) - \mathcal{K}(\psi_{2_h}),\psi_{1_h} - \psi_{2_h}) \,
        +&\, \boldsymbol{c}_2(\widehat{\boldsymbol{u}}_{1_h};\psi_{1_h}-\psi_{2_h},\psi_{1_h} - \psi_{2_h}) 
        + \boldsymbol{c}_2(\widehat{\boldsymbol{u}}_{1_h} -\widehat{\boldsymbol{u}}_{2_h};\psi_{2_h},\psi_{1_h} - \psi_{2_h}) \\
        &+ d(\psi_{1_h} - \psi_{2_h},\psi_{1_h} - \psi_{2_h})
        = 0,
    \end{align*}
using the boundedness of 
$\boldsymbol{c}_2(\cdot\,;\cdot,\cdot)$ and \eqref{eqkpsi}, we deduce
\begin{align}\label{eqseu}
    \|\mathcal{S}^{\mathrm{elec}}(\widehat{\boldsymbol{u}}_{1_h})
    - \mathcal{S}^{\mathrm{elec}}(\widehat{\boldsymbol{u}}_{2_h})\|_{1}
    = \|\psi_{1_h}-\psi_{2_h}\|_{1}
    \le 
    \frac{2}{\varepsilon}
    C_{\mathrm{Sob}}^{2}(1+C_p^{2})
    \|\psi_{2_h}\|_{1}
    \|\widehat{\boldsymbol{u}}_{1_h}-\widehat{\boldsymbol{u}}_{2_h}\|_{1,h}.
\end{align}
Similarly, let $\widehat{\psi}_{1_h},\widehat{\psi}_{2_h}\in \Phi_h$ and 
$(\boldsymbol{u}_{1_h},p_{1_h}),(\boldsymbol{u}_{2_h},p_{2_h})\in \boldsymbol{V}_h \times Q_h$ such that
\[  
\mathcal{S}^{\mathrm{flow}}(\widehat{\psi}_{1_h}) = (\boldsymbol{u}_{1_h},p_{1_h}),
\qquad
\mathcal{S}^{\mathrm{flow}}(\widehat{\psi}_{2_h}) = (\boldsymbol{u}_{2_h},p_{2_h}) .
\]
From \eqref{eqahuh}, we have
\begin{align*}
    A_h(\boldsymbol{u}_{1_h}-\boldsymbol{u}_{2_h},\boldsymbol{v}_h)
+ B_h(\boldsymbol{v}_h,p_{1_h}-p_{2_h})
+ c_1(\boldsymbol{u}_{1_h};\widehat{\psi}_{1_h}, \boldsymbol{v}_h)
&= F_h^{\widehat{\psi}_{1_h}}(\boldsymbol{v}_h) - F_h^{\widehat{\psi}_{2_h}}(\boldsymbol{v}_h),
\end{align*}
adding and subtracting $c_1(\boldsymbol{u}_{2_h};\widehat{\psi}_{1_h}, \boldsymbol{v}_h)$ and choosing $\boldsymbol{v}_h = \boldsymbol{u}_{1_h}-\boldsymbol{u}_{2_h}$, we obtain
\begin{align*}
    A_h(\boldsymbol{u}_{1_h}-\boldsymbol{u}_{2_h},\boldsymbol{u}_{1_h}-\boldsymbol{u}_{2_h})\,
+ \, B_h(\boldsymbol{u}_{1_h}-\boldsymbol{u}_{2_h},p_{1_h}-p_{2_h})\,
+&\, c_1(\boldsymbol{u}_{1_h}-\boldsymbol{u}_{2_h};\widehat{\psi}_{1_h}, \boldsymbol{u}_{1_h}-\boldsymbol{u}_{2_h})\\
+\, c_1(\boldsymbol{u}_{2_h};\widehat{\psi}_{1_h}-\widehat{\psi}_{2_h}, \boldsymbol{u}_{1_h}-\boldsymbol{u}_{2_h})
&= F_h^{\widehat{\psi}_{1_h}}(\boldsymbol{u}_{1_h}-\boldsymbol{u}_{2_h}) - F_h^{\widehat{\psi}_{2_h}}(\boldsymbol{u}_{1_h}-\boldsymbol{u}_{2_h}),
\end{align*}
using \eqref{eqahcb} and the second equation of \eqref{eqahuh}, we obtain
\begin{align*}
    \frac{C_S}{2}\|\boldsymbol{u}_{1_h}-\boldsymbol{u}_{2_h}\|_{1,h}^2 
    \le \bar{E}\Big(C^2_{\mathrm{Sob}}\|\boldsymbol{u}_{2_h}\|_{1,h}+\bar{K}\Big)
    \|\widehat{\psi}_{1_h}-\widehat{\psi}_{2_h}\|_1
    \|\boldsymbol{u}_{1_h}-\boldsymbol{u}_{2_h}\|_{1,h}.
\end{align*}
Simplifying, we obtain
\begin{align}\label{eqs1psi}
    \|\mathcal{S}^{\mathrm{flow}}_1(\widehat{\psi}_{1_h})  
    -  \mathcal{S}^{\mathrm{flow}}_1(\widehat{\psi}_{2_h})\|_{1,h}
    = \|\boldsymbol{u}_{1_h}-\boldsymbol{u}_{2_h}\|_{1,h} 
    \le \frac{2\bar{E}}{C_S}
    \Big(C^2_{\mathrm{Sob}}\|\boldsymbol{u}_{2_h}\|_{1,h}+\bar{K}\Big)
    \|\widehat{\psi}_{1_h}-\widehat{\psi}_{2_h}\|_1.
\end{align}
Combining \labelcref{eqseu,eqs1psi}, we obtain
\begin{align*}
    \|\boldsymbol{T}(\widehat{\boldsymbol{u}}_{1_h}) - \boldsymbol{T}(\widehat{\boldsymbol{u}}_{2_h})\|_{1,h} 
    &= \norm{\mathcal{S}^{\mathrm{flow}}_1\Big(\mathcal{S}^{\mathrm{elec}}(\widehat{\boldsymbol{u}}_{1_h}) \Big)
    -\mathcal{S}^{\mathrm{flow}}_1\Big(\mathcal{S}^{\mathrm{elec}}(\widehat{\boldsymbol{u}}_{2_h}) \Big)} \\[1ex]
    &\le \frac{4\bar{E}C_{\mathrm{Sob}}^{2}}{\varepsilon C_S}
    (1+C_p^{2}) 
    \Big(C^2_{\mathrm{Sob}}\|\boldsymbol{u}_{2_h}\|_{1,h}+\bar{K}\Big)
    \| \mathcal{S}^{\mathrm{elec}}(\widehat{\boldsymbol{u}}_{2_h})  \|_1 
    \|\widehat{\boldsymbol{u}}_{1_h}-\widehat{\boldsymbol{u}}_{2_h}\|_{1,h}.
\end{align*}
Since $ \mathcal{S}^{\mathrm{elec}}(\widehat{\boldsymbol{u}}_{2_h})$ satisfies \eqref{eqpsig} and 
$\widehat{\boldsymbol{u}}_{2_h}\in \boldsymbol{W}$, and by \Cref{lma2.9} we have 
$\boldsymbol{u}_{2_h}\in \boldsymbol{W}$, it follows that
\begin{align*}
    \|\boldsymbol{T}(\widehat{\boldsymbol{u}}_{1_h}) - \boldsymbol{T}(\widehat{\boldsymbol{u}}_{2_h})\|_{1,h} 
     \le  \frac{4\bar{E}C^2_{\mathrm{Sob}}}{C_S\varepsilon^2}
     (1+C_p^2)
     \Big[\varepsilon+ 2\bar{K}(1+C_p^2)\Big]
     \|g\|
     \|\widehat{\boldsymbol{u}}_{1_h}-\widehat{\boldsymbol{u}}_{2_h}\|_{1,h}.
\end{align*}
Together with \eqref{smc2} and Banach’s fixed-point theorem, it follows that $\boldsymbol{T}$ admits a unique fixed point in $\boldsymbol{W}$.
\end{proof}

\section{A \textit{priori} error bounds}\label{sec:pri}
In this section, we derive the convergence results for Nitsche’s scheme \eqref{ns2}. 
To this end, we first introduce the interpolation operator and recall its approximation properties. 
Let $I_h$ denote the Lagrange interpolation operator. Under the standard assumptions, the following approximation estimates hold \cite{brenner2008mathematical}.
\medskip
\begin{lemma}\label{lma3.1}
There exist constants $C_{10}>0$ and $C_{11}>0$, independent of $h$, such that for
each $u \in H^{l+1}(K)$ with $0 \le l \le k$, the following estimates hold:
\begin{align*}
\|u - I_h(u)\|_{L^2(K)}
&\le C_{10}\,  h_K^{\,l+1}\, |u|_{H^{l+1}(K)}, \\[0.5em]
|u - I_h(u)|_{H^1(K)}
&\le C_{11}\,  h_K^{\,l}\, |u|_{H^{l+1}(K)},
\end{align*}
where $h_K$ denotes the diameter of $K$, and $k$ is the polynomial degree.
\end{lemma}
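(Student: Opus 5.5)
The plan is the classical Bramble--Hilbert argument on a reference element, transferred to $K$ by an affine map; shape regularity of $\mathcal{T}_h$ is what keeps the constants independent of $h$.

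First, fix the reference simplex $\widehat{K}$ and the affine bijection $F_K(\widehat{x}) = B_K\widehat{x} + b_K$ with $F_K(\widehat{K}) = K$. Shape regularity gives $\|B_K\| \lesssim h_K$ and $\|B_K^{-1}\| \lesssim \rho_K^{-1} \lesssim h_K^{-1}$, with $|\det B_K| \simeq |K|$. For $u \in H^{l+1}(K)$ set $\widehat{u} = u\circ F_K$. The Lagrange interpolant commutes with the pullback, $\widehat{I_h u} = \widehat{I}\,\widehat{u}$, because nodes are mapped to nodes and $\mathbb{P}_{k+1}$ is invariant under affine maps.

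Second, work on $\widehat{K}$. The nodal interpolant needs point values, so we require $H^{l+1}(\widehat{K}) \hookrightarrow C^0(\widehat{K})$, which holds when $l+1 > n/2$. This is satisfied for $l\ge 1$ when $n\le 3$. For smaller $l$ one replaces $I_h$ by a Scott--Zhang or Clément-type quasi-interpolant, which has the same estimates but on element patches. I would state this restriction explicitly, since the lemma as written allows $l=0$.

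Under this restriction, $\widehat{I}$ is bounded from $H^{l+1}(\widehat{K})$ into $H^m(\widehat{K})$ for $m=0,1$, and it reproduces $\mathbb{P}_l \subset \mathbb{P}_{k+1}$. Therefore $\widehat{u} - \widehat{I}\widehat{u} = (\mathrm{Id}-\widehat{I})(\widehat{u}-\widehat{p})$ for every $\widehat{p}\in\mathbb{P}_l$. The Bramble--Hilbert (Deny--Lions) lemma then gives
\[
\|\widehat{u}-\widehat{I}\widehat{u}\|_{H^m(\widehat{K})} \lesssim \inf_{\widehat{p}\in\mathbb{P}_l}\|\widehat{u}-\widehat{p}\|_{H^{l+1}(\widehat{K})} \lesssim |\widehat{u}|_{H^{l+1}(\widehat{K})}.
\]

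Third, scale back to $K$ using the standard change-of-variables bounds:
\[
|\widehat{v}|_{H^{j}(\widehat{K})} \lesssim \|B_K\|^{j}|\det B_K|^{-1/2}|v|_{H^j(K)}, \qquad
|v|_{H^{m}(K)} \lesssim \|B_K^{-1}\|^{m}|\det B_K|^{1/2}|\widehat{v}|_{H^m(\widehat{K})}.
\]
Taking $j=l+1$ and $m\in\{0,1\}$, the determinant factors cancel. This leaves $h_K^{l+1-m}|u|_{H^{l+1}(K)}$, which is the $L^2$ bound for $m=0$ and the $H^1$ seminorm bound for $m=1$. The constants $C_{10}$ and $C_{11}$ depend only on $\widehat{K}$, $k$ and the shape-regularity constant.

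The main obstacle is the second step: the continuity of $\widehat{I}$, that is, the embedding issue for small $l$ in three dimensions. The scaling and the Bramble--Hilbert step are routine.
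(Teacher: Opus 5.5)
Your proposal is correct. It is the standard Bramble--Hilbert plus affine-scaling argument; the paper gives no proof of its own and simply cites Brenner--Scott, where this argument is carried out. Your caveat that nodal Lagrange interpolation on $H^{l+1}$ requires $l+1>n/2$, i.e.\ $l\ge 1$ for $n=2,3$, is a valid correction to the stated range $0\le l\le k$. It does not affect the rest of the paper, since the a priori convergence theorem only uses $l\ge 1$.
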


Next, we introduce the sets
\begin{equation}\label{kh}
\begin{aligned}
    K_{1_h} &= \{ \boldsymbol{v}_h \in \boldsymbol{V}_h\, \colon \,\|\boldsymbol{v}_h\|_{1,h} \,\leq \frac{1}{4}\hat{\alpha}\}, \\
    K_{2_h} &= \{ \phi_h \in \Phi_h\, \colon\, \|\phi_h\|_{1}\, \leq \frac{1}{4}\hat{\alpha} \},
\end{aligned}
\end{equation}
where $\hat{\alpha}$ is the constant defined in \Cref{thm2.8}. 
For a given $(\widetilde{\boldsymbol{u}}, \widetilde{\psi}) \in K_{1_h}\times K_{2_h}$, we define the bilinear form $ \mathcal{A}_h^{(\widetilde{\boldsymbol{u}}, \widetilde{\psi})}(\cdot\,;\cdot)$ by
\begin{equation}\label{awh}
 \mathcal{A}_h^{(\widetilde{\boldsymbol{u}}, \widetilde{\psi})}(\boldsymbol{u}_h,p_h,\psi_h;\boldsymbol{v}_h,q_h,\phi_h)
= \mathcal{C}_h(\boldsymbol{u}_h,p_h,\psi_h;\boldsymbol{v}_h,q_h,\phi_h)
+ c_1(\boldsymbol{u}_h;\widetilde{\psi},\boldsymbol{v}_h)+ c_2(\widetilde{\boldsymbol{u}};\psi_h,\phi_h),
\end{equation}
where $\mathcal{C}_h$ is defined in \eqref{eqch}.

The following lemma establishes an inf-sup stability estimate for the above bilinear form.
\begin{lemma}\label{lma3.2}
Assume that $(\widetilde{\boldsymbol{u}}, \widetilde{\psi}) \in K_{1_h}\times K_{2_h}$. 
Then the bilinear form $ \mathcal{A}_h^{(\widetilde{\boldsymbol{u}}, \widetilde{\psi})}(\cdot\,;\cdot)$ satisfies the estimate
 \begin{equation*}
\sup_{0 \neq (\boldsymbol{v}_h,q_h,\phi_h)\in \boldsymbol{V}_h \times Q_h \times \Phi_h}
\frac{\mathcal{A}_h^{(\widetilde{\boldsymbol{u}}, \widetilde{\psi})}(\boldsymbol{u}_h,p_h,\psi_h;\boldsymbol{v}_h,q_h,\phi_h)}
{\|(\boldsymbol{v}_h,q_h,\phi_h)\|}
\ge \frac{\hat{\alpha}}{2} \|(\boldsymbol{u}_h,p_h,\psi_h)\|,\qquad  \forall\, (\boldsymbol{u}_h,p_h,\psi_h)\in \boldsymbol{V}_h \times Q_h \times \Phi_h . 
\end{equation*}
\end{lemma}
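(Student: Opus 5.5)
The plan is a perturbation argument: view $\mathcal{A}_h^{(\widetilde{\boldsymbol{u}},\widetilde{\psi})}$ as $\mathcal{C}_h$ plus two small trilinear terms, and absorb those terms using the uniform inf-sup constant $\hat{\alpha}$ of \Cref{thm2.8}.

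Fix $(\boldsymbol{u}_h,p_h,\psi_h)\in \boldsymbol{V}_h\times Q_h\times\Phi_h$. By \eqref{awh}, for every test triple $(\boldsymbol{v}_h,q_h,\phi_h)$ we have
\begin{align*}
\mathcal{A}_h^{(\widetilde{\boldsymbol{u}},\widetilde{\psi})}(\boldsymbol{u}_h,p_h,\psi_h;\boldsymbol{v}_h,q_h,\phi_h)
\ge \mathcal{C}_h(\boldsymbol{u}_h,p_h,\psi_h;\boldsymbol{v}_h,q_h,\phi_h)
- |c_1(\boldsymbol{u}_h;\widetilde{\psi},\boldsymbol{v}_h)| - |c_2(\widetilde{\boldsymbol{u}};\psi_h,\phi_h)|.
\end{align*}

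To bound the perturbations, I would use \Cref{lma2.5}:
\begin{align*}
|c_1(\boldsymbol{u}_h;\widetilde{\psi},\boldsymbol{v}_h)| &\le \bar{E}C_{\mathrm{Sob}}^2\|\widetilde{\psi}\|_1\|\boldsymbol{u}_h\|_{1,h}\|\boldsymbol{v}_h\|_{1,h},\\
|c_2(\widetilde{\boldsymbol{u}};\psi_h,\phi_h)| &\le C_{\mathrm{Sob}}^2\|\widetilde{\boldsymbol{u}}\|_{1,h}\|\psi_h\|_1\|\phi_h\|_1.
\end{align*}
Since $\Phi_h\subset H^1_0(\Omega)$, Poincaré's inequality lets me replace $\|\cdot\|_1$ by $\|\nabla\cdot\|$ at the cost of a constant. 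The membership $(\widetilde{\boldsymbol{u}},\widetilde{\psi})\in K_{1_h}\times K_{2_h}$ gives $\|\widetilde{\boldsymbol{u}}\|_{1,h},\|\widetilde{\psi}\|_1\le \hat{\alpha}/4$. Both perturbations are therefore bounded by $\tfrac{\hat{\alpha}}{4}\,C_*\,\|(\boldsymbol{u}_h,p_h,\psi_h)\|\,\|(\boldsymbol{v}_h,q_h,\phi_h)\|$, where $C_*=\max\{\bar{E},1\}C_{\mathrm{Sob}}^2$ times a Poincaré factor. Here each component norm is bounded by the product norm.

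Dividing by $\|(\boldsymbol{v}_h,q_h,\phi_h)\|$, taking the supremum, and invoking \Cref{thm2.8} then yields
\begin{align*}
\sup\frac{\mathcal{A}_h^{(\widetilde{\boldsymbol{u}},\widetilde{\psi})}(\cdots)}{\|(\boldsymbol{v}_h,q_h,\phi_h)\|}
\ge \Big(\hat{\alpha}-\tfrac{\hat{\alpha}}{4}C_*\cdot 2\Big)\|(\boldsymbol{u}_h,p_h,\psi_h)\|.
\end{align*}
This is $\ge \tfrac{\hat{\alpha}}{2}\|(\boldsymbol{u}_h,p_h,\psi_h)\|$ exactly when the combined constant satisfies $C_*\le 1$.

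The main obstacle is this bookkeeping of constants. As written, the radius $\hat{\alpha}/4$ in \eqref{kh} only gives the factor $\tfrac12$ if the Sobolev, Poincaré and $\bar{E}$ factors multiplying the perturbations are normalized to at most one. Otherwise the radius must be read as $\hat{\alpha}/(4C_*)$, so that the two perturbation terms together cost at most $\hat{\alpha}/2$.

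I would also handle the combination of norms carefully. I would use $\|\boldsymbol{u}_h\|_{1,h}\|\boldsymbol{v}_h\|_{1,h}+\|\psi_h\|_1\|\phi_h\|_1\le \|(\boldsymbol{u}_h,p_h,\psi_h)\|\,\|(\boldsymbol{v}_h,q_h,\phi_h)\|$, which holds by Cauchy–Schwarz in $\mathbb{R}^2$ applied to the component norms. This avoids the spurious factor $2$, so each perturbation contributes at most $\hat{\alpha}/4$ and the two together at most $\hat{\alpha}/2$.
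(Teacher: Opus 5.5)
Your proposal follows the paper's proof essentially step for step: you treat $\mathcal{A}_h^{(\widetilde{\boldsymbol{u}}, \widetilde{\psi})}$ as a perturbation of $\mathcal{C}_h$, bound $c_1$ and $c_2$ through \Cref{lma2.5} and the radii in \eqref{kh}, and subtract this from the inf-sup bound of \Cref{thm2.8}. The constant issue you flag is genuine and the paper has it too: it writes the resulting lower bound as $C(\hat{\alpha}-\|\widetilde{\boldsymbol{u}}\|_{1,h}-\|\widetilde{\psi}\|_1)\|(\boldsymbol{u}_h,p_h,\psi_h)\|$ without tracking constants, so the radius $\hat{\alpha}/4$ must implicitly absorb $\bar{E}C_{\mathrm{Sob}}^2$ and the Poincar\'e factor, as you say. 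Note also that your Cauchy--Schwarz step bounds the two perturbations together by $\tfrac{\hat{\alpha}}{4}C_*$, so $C_*\le 2$ already suffices.
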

\begin{proof}
Consider $(\boldsymbol{u}_h,p_h,\psi_h),(\hat{\boldsymbol{v}}_h,\hat{q}_h,\hat{\phi}_h) \in \boldsymbol{V}_h \times Q_h \times \Phi_h$ with $(\hat{\boldsymbol{v}}_h,\hat{q}_h,\hat{\phi}_h)\neq 0$. From \Cref{lma2.5}, we observe that
\begin{align*}
&\sup_{0 \neq (\boldsymbol{v}_h,q_h,\phi_h)\in \boldsymbol{V}_h \times Q_h \times \Phi_h}
\frac{\mathcal{A}_h^{(\widetilde{\boldsymbol{u}}, \widetilde{\psi})}(\boldsymbol{u}_h,p_h,\psi_h;\boldsymbol{v}_h,q_h,\phi_h)}
{\|(\boldsymbol{v}_h,q_h,\phi_h)\|}
\ge
\frac{\big|\mathcal{C}_h(\boldsymbol{u}_h,p_h,\psi_h;\hat{\boldsymbol{v}}_h,\hat{q}_h, \hat{\phi}_h)\big|}
{\|(\hat{\boldsymbol{v}}_h,\hat{q}_h, \hat{\phi}_h)\|} \\& \qquad \qquad \qquad \qquad \qquad \qquad \qquad \qquad \qquad \qquad \qquad \qquad 
-
\frac{\big| c_1(\boldsymbol{u}_h;\widetilde{\psi},\hat{\boldsymbol{v}}_h) \big| + \big| c_2(\widetilde{\boldsymbol{u}};\psi_h,\hat{\phi}_h))\big|}
{\|(\hat{\boldsymbol{v}}_h,\hat{q}_h, \hat{\phi}_h)\|}, \\
& \qquad \qquad \qquad \qquad   \ge
\frac{\big|\mathcal{C}_h(\boldsymbol{u}_h,p_h,\psi_h;\hat{\boldsymbol{v}}_h,\hat{q}_h, \hat{\phi}_h)\big|}
{\|(\hat{\boldsymbol{v}}_h,\hat{q}_h, \hat{\phi}_h)\|}
- C ( \|\widetilde{\boldsymbol{u}}\|_{1,h} + \|\widetilde{\psi}\|_1)\,\|(\boldsymbol{u}_h,p_h,\psi_h)\|,
\end{align*}
combining this estimate with \Cref{thm2.8} and using the fact that
$(\hat{\boldsymbol{v}}_h,\hat{q}_h, \hat{\phi}_h)$ is arbitrary, we obtain
\begin{equation*}
\sup_{0 \neq (\boldsymbol{v}_h,q_h,\phi_h)\in \boldsymbol{V}_h \times Q_h \times \Phi_h}
\frac{\mathcal{A}_h^{(\widetilde{\boldsymbol{u}}, \widetilde{\psi})}(\boldsymbol{u}_h,p_h,\psi_h;\boldsymbol{v}_h,q_h,\phi_h)}
{\|(\boldsymbol{v}_h,q_h,\phi_h)\|}
\ge
C\big(\hat{\alpha} - \|\widetilde{\boldsymbol{u}}\|_{1,h} - \|\widetilde{\psi}\|_1\big)\,
\|(\boldsymbol{u}_h,p_h,\psi_h)\|,
\end{equation*}
where $\hat{\alpha}$ does not depend on $\widetilde{\boldsymbol{u}}$ and $\widetilde{\psi}$. Consequently, invoking the definitions of the sets $K_{1_h}$ and $K_{2_h}$ in \eqref{kh} yields the desired result.
\end{proof}

To proceed further, for any $(\boldsymbol{z}_h,\zeta_h, \omega_h)\in \boldsymbol{V}_h \times Q_h \times \Phi_h$, we decompose the errors as
\begin{equation}\label{de}
\begin{aligned}
    e_{\boldsymbol{u}}  &= \boldsymbol{u} - \boldsymbol{u}_h = (\boldsymbol{u} - \boldsymbol{z}_h) + (\boldsymbol{z}_h - \boldsymbol{u}_h)= \xi_{\boldsymbol{u}} + \chi_{\boldsymbol{u}},\\
    e_p  &= p - p_h = (p - \zeta_h) + (\zeta_h - p_h)= \xi_p + \chi_p ,\\
    e_{\psi} &= \psi-\psi_h = (\psi- \omega_h) + (\omega_h -\psi_h) =\xi_{\psi} + \chi_{\psi}.
\end{aligned}
\end{equation}
Now we provide the corresponding C\`ea’s estimate.
\begin{lemma}\label{lma3.3}
    Suppose $(\boldsymbol{u},p,\psi) \in \boldsymbol{V}\times Q \times \Phi$ and $(\boldsymbol{u}_h,p_h,\psi_h) \in \boldsymbol{V}_h\times Q_h \times \Phi_h$ be the solution of the problem \eqref{wf} and \eqref{ns2} respectively. Assume  \ref{(A1)}--\ref{(A3)}, and that $\|\boldsymbol{u}\|_1 \le M$ and $\|\psi\|_1 \le M$, where $M < \frac{\hat{\alpha}}{2}$
    then there exist a positive constant $C_{cea} = C_{cea}(\mu, \beta, \Omega)$, independent of $h$, such that 
    \begin{align*}
        \|(\boldsymbol{u}-\boldsymbol{u}_h, p-p_h,\psi-\psi_h)\| \le C_{cea} \inf_{0 \neq(\boldsymbol{v}_h, q_h,\phi_h)\in \boldsymbol{V}_h \times Q_h\times \Phi_h} \|(\boldsymbol{u}-\boldsymbol{v}_h, p-q_h,\psi-\phi_h)\|.
    \end{align*}
\end{lemma}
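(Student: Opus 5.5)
We use the error splitting \eqref{de} with an arbitrary $(\boldsymbol{z}_h,\zeta_h,\omega_h)\in \boldsymbol{V}_h\times Q_h\times\Phi_h$. Since $\|(e_{\boldsymbol u},e_p,e_\psi)\|\le \|(\xi_{\boldsymbol u},\xi_p,\xi_\psi)\|+\|(\chi_{\boldsymbol u},\chi_p,\chi_\psi)\|$, it is enough to bound the discrete part $(\chi_{\boldsymbol u},\chi_p,\chi_\psi)$ by the interpolation part $(\xi_{\boldsymbol u},\xi_p,\xi_\psi)$. For this we use the inf-sup stability of the frozen linearized form $\mathcal{A}_h^{(\widetilde{\boldsymbol u},\widetilde\psi)}$ from \Cref{lma3.2}.

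\emph{Step 1 (consistency).} Assume $\boldsymbol u$ is regular enough that $\boldsymbol n^t\nabla\boldsymbol u\,\boldsymbol n$ has an $L^2$ trace on $\Gamma_{\mathrm{Nav}}$. Then the exact solution satisfies the discrete equations \eqref{ns2} for all test functions in $\boldsymbol V_h\times Q_h\times\Phi_h$. This follows by integrating by parts elementwise, using $\boldsymbol u\cdot\boldsymbol n=0$ (so every Nitsche term containing $\boldsymbol n\cdot\boldsymbol u$ vanishes) and the tangential Navier condition \eqref{navbc}. Subtracting the two systems gives an error equation in which $\mathcal{C}_h(e_{\boldsymbol u},e_p,e_\psi;\cdot)$ appears together with the nonlinear differences
\[
c_1(\boldsymbol u;\psi,\boldsymbol v_h)-c_1(\boldsymbol u_h;\psi_h,\boldsymbol v_h),\quad c_2(\boldsymbol u;\psi,\phi_h)-c_2(\boldsymbol u_h;\psi_h,\phi_h),\quad (\mathcal K(\psi)-\mathcal K(\psi_h),\phi_h+\boldsymbol E\cdot\boldsymbol v_h).
\]

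\emph{Step 2 (linearization).} We split the trilinear differences as
\[
c_1(\boldsymbol u;\psi,\cdot)-c_1(\boldsymbol u_h;\psi_h,\cdot)=c_1(e_{\boldsymbol u};\psi,\cdot)+c_1(\boldsymbol u_h;e_\psi,\cdot),
\]
and similarly for $c_2$. The parts linear in $\chi$ are then grouped so that they form $\mathcal{A}_h^{(\widetilde{\boldsymbol u},\widetilde\psi)}(\chi_{\boldsymbol u},\chi_p,\chi_\psi;\boldsymbol v_h,q_h,\phi_h)$, with frozen coefficients built from $\boldsymbol u$ and $\psi$. The hypothesis $M<\hat\alpha/2$ is intended to put these frozen coefficients in $K_{1_h}\times K_{2_h}$, possibly after rearranging the splitting so that only the exact solution appears as a coefficient. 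Since $\boldsymbol u\notin\boldsymbol V_h$, \Cref{lma3.2} has to be applied to a form whose coefficients merely satisfy the norm bounds; the proof of that lemma uses only those bounds through \Cref{lma2.5}, so it carries over unchanged. The remaining terms are of three kinds: the $\xi$-terms in $\mathcal{C}_h$ and in $c_1,c_2$, the nonlinear $\mathcal K$ difference, and the quadratic cross terms.

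\emph{Step 3 (bounding the right-hand side).} \Cref{lma3.2} yields
\[
\tfrac{\hat\alpha}{2}\|(\chi_{\boldsymbol u},\chi_p,\chi_\psi)\|\le \sup\frac{\mathcal{A}_h(\chi;\boldsymbol v_h,q_h,\phi_h)}{\|(\boldsymbol v_h,q_h,\phi_h)\|}.
\]
We express the numerator through the error equation and bound the pieces as follows.
\begin{itemize}
\item The $\mathcal C_h(\xi;\cdot)$ term is controlled by \Cref{lma2.5}, applied in an extended norm that adds $h_e^{1/2}\|\nabla\xi_{\boldsymbol u}\|_{0,e}$ and $h_e^{1/2}\|\xi_p\|_{0,e}$ to handle the boundary flux terms $a_c^\partial$ and $b^\partial$. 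These boundary contributions are converted back to volume norms by \Cref{lma2.4}.
\item The $c_1,c_2$ $\xi$-terms are bounded using \Cref{lma1.1} together with $\|\boldsymbol u\|_1,\|\psi\|_1\le M$.
\item The $\mathcal K$ difference is Lipschitz by \ref{(A2)}, but it contains $\overline K\|\chi_\psi\|$, which is not small. It must therefore be kept on the left-hand side. We do this by testing with $\phi_h=\chi_\psi$ and using the monotonicity $(\mathcal K(\psi)-\mathcal K(\psi_h),\chi_\psi)\ge \underline K\|\cdot\|^2$ in the same way as \eqref{eqkpsi}, or alternatively by absorbing $\overline K C_P$ into the coercivity.
\item The quadratic cross terms are bounded by $C(M+\|\boldsymbol u_h\|_{1,h}+\|\psi_h\|_1)\|\chi\|$. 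The a priori bounds \eqref{equhfg}, \eqref{eqpsig} and the small-data assumptions make the coefficient smaller than $\hat\alpha/4$, so these terms can be absorbed.
\end{itemize}

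\emph{Main obstacle.} The hardest part is this absorption: the constants coming from the nonlinear $\mathcal K$ term and the convective cross terms must stay strictly below the stability constant $\hat\alpha/2$ supplied by \Cref{lma3.2}. If direct absorption fails for the $\mathcal K$ term, the first fallback is to treat $(\mathcal K(\psi_h)-\mathcal K(\omega_h),\phi_h)$ as part of the left-hand operator and use its monotonicity rather than its Lipschitz bound. Once absorption works, we obtain $\|\chi\|\lesssim\|(\xi_{\boldsymbol u},\xi_p,\xi_\psi)\|$. The triangle inequality and taking the infimum over $(\boldsymbol z_h,\zeta_h,\omega_h)$ then give the stated estimate with $C_{cea}$ depending only on $\mu,\beta,\Omega$, through $\hat\alpha$ and $M$.
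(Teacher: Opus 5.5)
\textbf{The gap: the $\mathcal K$-terms.} You flag these yourself as the main obstacle, but the proposal does not resolve them. The Lipschitz bound applied to $((\mathcal K(\psi)-\mathcal K(\psi_h))\boldsymbol E,\boldsymbol v_h)+(\mathcal K(\psi)-\mathcal K(\psi_h),\phi_h)$ puts roughly $\overline K(1+\bar E)C_P\|\chi_\psi\|_1$ on the right-hand side. Nothing in the hypotheses makes this smaller than $\hat\alpha/2$, so ``absorbing $\overline K C_P$ into the coercivity'' is unjustified.

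The monotonicity fallback does not fit your framework either. \Cref{lma3.2} controls $\|(\chi_{\boldsymbol u},\chi_p,\chi_\psi)\|$ through a supremum over test functions, so you are not free to take $\phi_h=\chi_\psi$. Even if you do, monotonicity of $\mathcal K(\omega_h)-\mathcal K(\psi_h)$ says nothing about the momentum term $((\mathcal K(\psi)-\mathcal K(\psi_h))\boldsymbol E,\boldsymbol v_h)$. That term is tested against $\boldsymbol v_h$ and is an $O(\overline K\bar E)$ coupling of $\chi_\psi$ into the velocity equation.

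The missing idea is that this strong coupling acts in one direction only ($\psi\to\boldsymbol u$). The reverse coupling enters only through $c_2(e_{\boldsymbol u};\psi_h,\phi_h)$, whose coefficient is small by \eqref{eqpsig}. The argument then runs in three steps.
\begin{enumerate}
\item Estimate the potential first. Test the third error equation with $\phi_h=\chi_\psi$ and split $\mathcal K(\psi)-\mathcal K(\psi_h)=[\mathcal K(\psi)-\mathcal K(\omega_h)]+[\mathcal K(\omega_h)-\mathcal K(\psi_h)]$. Drop the second bracket by monotonicity. Use $c_2(\boldsymbol u;\chi_\psi,\chi_\psi)=0$ (since $\operatorname{div}\boldsymbol u=0$ and $\chi_\psi\in H^1_0$) and the coercivity of $d$. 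This gives $\|\chi_\psi\|_1\lesssim\|(\xi_{\boldsymbol u},\xi_p,\xi_\psi)\|+\delta\|\chi_{\boldsymbol u}\|_{1,h}$ with $\delta\sim C_{\mathrm{Sob}}^2\|\psi_h\|_1/\varepsilon$.
\item Apply discrete Stokes stability (\Cref{lma2.6,lma2.7}, with $c_1(\chi_{\boldsymbol u};\psi_h,\cdot)$ absorbed as in \eqref{eqahcb}) to $(\chi_{\boldsymbol u},\chi_p)$. Keep the $\chi_\psi$-terms on the right with an $O(1)$ constant.
\item Substitute the first bound into the second and absorb. What must be small is the \emph{product} of the two coupling strengths, which has the same structure as \eqref{smc2}, not $\overline K$ itself.
\end{enumerate}
For the record, the paper does not close this step either. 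It writes the $\mathcal K$-contributions as $\epsilon(\|\xi_\psi\|_1+\|\chi_\psi\|_1)\cdot\epsilon^{-1}\|\boldsymbol v_h\|_{1,h}$ and takes $\epsilon$ small. After division by the norm of the test function, however, the $\epsilon^{-1}$ cancels the $\epsilon$. Your diagnosis is sharper than the paper's argument.

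\textbf{Where you spend $M<\hat\alpha/2$.} You use this hypothesis differently from the paper, and it does not do what you need. The paper freezes the coefficients at the discrete solution: it writes the left-hand side as $\mathcal A_h^{(\boldsymbol u_h,\psi_h)}(\chi_{\boldsymbol u},\chi_p,\chi_\psi;\cdot)$, with $(\boldsymbol u_h,\psi_h)$ assumed to lie in $K_{1_h}\times K_{2_h}$. It then spends $\|\boldsymbol u\|_1,\|\psi\|_1\le M<\hat\alpha/2$ on absorbing the off-diagonal terms $c_1(\boldsymbol u;\chi_\psi,\boldsymbol v_h)$ and $c_2(\chi_{\boldsymbol u};\psi,\phi_h)$ against the constant $\hat\alpha/2$ of \Cref{lma3.2}.

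You freeze at $(\boldsymbol u,\psi)$ instead, which causes two problems:
\begin{enumerate}
\item $M<\hat\alpha/2$ does not place $(\boldsymbol u,\psi)$ in $K_{1_h}\times K_{2_h}$, whose radius is $\hat\alpha/4$. The proof of \Cref{lma3.2} then yields only a constant of order $\hat\alpha-2M$, which degenerates as $M\to\hat\alpha/2$.
\item Your off-diagonal terms $c_1(\boldsymbol u_h;\chi_\psi,\cdot)$ and $c_2(\chi_{\boldsymbol u};\psi_h,\cdot)$ must then be absorbed against that degenerate constant. This needs $\|\boldsymbol u_h\|_{1,h}$ and $\|\psi_h\|_1$ small relative to $\hat\alpha-2M$. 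That does not follow from \eqref{equhfg}, \eqref{eqpsig}, \eqref{smc} or \eqref{smc2}, none of which involves $\hat\alpha$. The stated hypothesis no longer does the absorbing in your arrangement.
\end{enumerate}

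\textbf{Minor point.} Your extended-norm treatment of $a_c^\partial(\xi_{\boldsymbol u},\cdot)$ and $b^\partial(\cdot,\xi_p)$ is more careful than the paper's direct use of \Cref{lma2.5} on $\xi_{\boldsymbol u}\notin\boldsymbol V_h$. However, after \Cref{lma2.4} it leaves $h_K|\xi_{\boldsymbol u}|_{2,K}$ and $h_K\|\nabla\xi_p\|_{0,K}$ on the right. You therefore prove quasi-optimality in a stronger norm than the one in the statement; this is harmless for the subsequent convergence theorem.
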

\begin{proof}
Considering \eqref{wf} and \eqref{ns2}, we observe the the following orthogonality result
\begin{align}\label{gop}
    \mathcal{C}_h(e_{\boldsymbol{u}},&\, e_p, e_\psi;\boldsymbol{v}_h,q_h,\phi_h) + c_1(\boldsymbol{u};\psi,\boldsymbol{v}_h) - c_1(\boldsymbol{u}_h;\psi_h,\boldsymbol{v}_h)+ ((\mathcal{K}(\psi)-\mathcal{K}(\psi_h))\boldsymbol{E}, \boldsymbol{v}_h) \nonumber\\
    &+ c_2(\boldsymbol{u};\psi,\phi_h)- c_2(\boldsymbol{u}_h;\psi_h,\phi_h) +(\mathcal{K}(\psi)-\mathcal{K}(\psi_h),\phi_h) = 0, \qquad \forall (\boldsymbol{v}_h, q_h,\phi_h)\in \boldsymbol{V}_h \times Q_h\times \Phi_h.
\end{align}
Now using \labelcref{awh,gop}, for any $(\boldsymbol{v}_h,q_h,\phi_h)\in \boldsymbol{V}_h \times Q_h \times \Phi_h$, we have the following relation 
\begin{align}
\mathcal{A}_h^{(\boldsymbol{u}_h, \psi_h)}(\chi_{\boldsymbol{u}},\chi_p,\chi_\psi;\boldsymbol{v}_h,q_h,\phi_h) 
& = 
\mathcal{C}_h(\chi_{\boldsymbol{u}},\chi_p,\chi_\psi;\boldsymbol{v}_h,q_h,\phi_h) 
+ c_1\bigl(\chi_{\boldsymbol{u}}, \psi_h, \boldsymbol{v}_h \bigr) 
+ c_2\bigl(\boldsymbol{u}_h, \chi_\psi, \phi_h \bigr), \nonumber \\[0.3em]
& =- \mathcal{C}_h(\xi_{\boldsymbol{u}}, \xi_p, \xi_\psi ; \boldsymbol{v}_h, q_h, \phi_h ) 
+ \mathcal{C}_h(e_{\boldsymbol{u}}, e_p, e_\psi ; \boldsymbol{v}_h, q_h, \phi_h ) 
+ c_1\bigl(\chi_{\boldsymbol{u}}, \psi_h, \boldsymbol{v}_h \bigr) 
\nonumber \\[0.3em] &  \quad + c_2\bigl(\boldsymbol{u}_h, \chi_\psi, \phi_h \bigr), \nonumber\\[0.3em]
&=- \mathcal{C}_h(\xi_{\boldsymbol{u}}, \xi_p, \xi_\psi ; \boldsymbol{v}_h, q_h, \phi_h )  
- c_1(\boldsymbol{u};\psi,\boldsymbol{v}_h) 
+ c_1(\boldsymbol{u}_h;\psi_h,\boldsymbol{v}_h) \nonumber\\[0.3em]
&\quad
+ c_1\bigl(\chi_{\boldsymbol{u}}, \psi_h, \boldsymbol{v}_h\bigr) 
 - c_2(\boldsymbol{u};\psi,\phi_h)   + c_2(\boldsymbol{u}_h;\psi_h,\phi_h) 
+ c_2\bigl(\boldsymbol{u}_h, \chi_\psi, \phi_h \bigr) \nonumber\\[0.3em]
&\quad
- ((\mathcal{K}(\psi)-\mathcal{K}(\psi_h))\boldsymbol{E}, \boldsymbol{v}_h) 
-(\mathcal{K}(\psi)-\mathcal{K}(\psi_h),\phi_h).\label{eq15}
\end{align}
Since,
\begin{equation}\label{eq16}
    \begin{aligned}
        c_1(\boldsymbol{u};\psi,\boldsymbol{v}_h) = c_1(\boldsymbol{u};\psi-\psi_h,\boldsymbol{v}_h) + c_1(\boldsymbol{u};\psi_h,\boldsymbol{v}_h),\\
        c_2(\boldsymbol{u};\psi,\phi_h) = c_2(\boldsymbol{u}-\boldsymbol{u}_h;\psi,\phi_h) + c_2(\boldsymbol{u}_h;\psi,\phi_h).
    \end{aligned}
\end{equation}
Using \eqref{eq16}, we have
\begin{align}
    - c_1(\boldsymbol{u};\psi,\boldsymbol{v}_h) 
+ c_1(\boldsymbol{u}_h;\psi_h,\boldsymbol{v}_h) 
+ c_1\bigl(\chi_{\boldsymbol{u}}; \psi_h, \boldsymbol{v}_h\bigr) &=  
- c_1(\boldsymbol{u};\psi-\psi_h,\boldsymbol{v}_h) 
- c_1(\boldsymbol{u} - \boldsymbol{u}_h;\psi_h,\boldsymbol{v}_h)
+ c_1\bigl(\chi_{\boldsymbol{u}}; \psi_h, \boldsymbol{v}_h\bigr) \nonumber\\
&= - c_1(\boldsymbol{u};\chi_{\psi}+\xi_{\psi},\boldsymbol{v}_h) 
- c_1\bigl(\xi_{\boldsymbol{u}}; \psi_h, \boldsymbol{v}_h\bigr), \label{eqc1} \\
    - c_2(\boldsymbol{u};\psi,\phi_h)  + c_2(\boldsymbol{u}_h;\psi_h,\phi_h) 
+ c_2\bigl(\boldsymbol{u}_h, \chi_\psi, \phi_h \bigr)
&= - c_2(\chi_{\boldsymbol{u}}+\xi_{\boldsymbol{u}}; \psi, \phi_h)
- c_2(\boldsymbol{u}_h;\xi_{\psi}, \phi_h) \label{eqc2}. 
\end{align}
Using \labelcref{eqc1,eqc2} in \eqref{eq15} and together with the definition of $\mathcal{C}_h$ provided in \eqref{eqch}, we have
\begin{align}
    \mathcal{A}_{\boldsymbol{u}_h}(\chi_{\boldsymbol{u}},\chi_p,\chi_\psi;\boldsymbol{v}_h,q_h,\phi_h) 
    &= -A_h(\xi_{\boldsymbol{u}},\boldsymbol{v}_h) - B_h(\xi_{\boldsymbol{u}},q_h) - B_h(\boldsymbol{v}_h,\xi_p) - d(\xi_{\psi},\phi_h)
    - c_1(\boldsymbol{u};\chi_{\psi}+\xi_{\psi},\boldsymbol{v}_h)  \nonumber\\[0.3em]
   & \qquad- c_1\bigl(\xi_{\boldsymbol{u}}; \psi_h, \boldsymbol{v}_h\bigr)
- c_2(\chi_{\boldsymbol{u}}+\xi_{\boldsymbol{u}}; \psi, \phi_h) 
- c_2(\boldsymbol{u}_h;\xi_{\psi}, \phi_h)\nonumber\\[0.3em]
   & \qquad- ((\mathcal{K}(\psi)-\mathcal{K}(\psi_h))\boldsymbol{E}, \boldsymbol{v}_h) -(\mathcal{K}(\psi)-\mathcal{K}(\psi_h),\phi_h),
\end{align}
using \Cref{lma2.5,lma3.2}, we get
\begin{align*}
    \frac{\hat{\alpha}}{2} \|(\chi_{\boldsymbol{u}},\chi_p,\chi_\psi)\| &\lesssim \frac{1}{\|(\boldsymbol{v}_h,q_h,\phi_h)\|}\bigg[\Big( \|\xi_{\boldsymbol{u}}\|_{1,h} + \|\xi_p\| +\|\boldsymbol{u}\|_1 \|\xi_{\psi}\|_{1} + \|\chi_{\psi}\|_1\|\boldsymbol{u}\|_1 + \|\xi_{\boldsymbol{u}}\|_{1,h}\|\psi_h\|_1\Big)\|\boldsymbol{v}_h\|_{1,h}  \\
    &\qquad + \|\xi_{\boldsymbol{u}}\|_{1,h}\|q_h\|
    + \Big(\|\xi_{\psi}\|_1 +\|\chi_{\boldsymbol{u}}\|_{1,h} \|\psi\|_1 + \|\xi_{\boldsymbol{u}}\|_{1,h} \|\psi\|_1 +\|\boldsymbol{u}_h\|_{1,h}\|\xi_{\psi}\|_1\Big)\|\phi_h\|_1 \\
    & \qquad +\epsilon (\|\xi_{\psi}\|_1 + \|\chi_{\psi}\|_1) \Big(\frac{1}{\epsilon}\|v_h\|_{1,h}\Big) +\epsilon (\|\xi_{\psi}\|_1 + \|\chi_{\psi}\|_1) \Big(\frac{1}{\epsilon}\|\phi_h\|_1\Big) \bigg],\\
    &\lesssim \|\xi_{\boldsymbol{u}}\|_{1,h}(1+\|\psi_h\|_1)+ \|\xi_p\| + (1+\|\boldsymbol{u}\|_1+\|\boldsymbol{u}_h\|_{1,h} + \epsilon)\|\xi_{\psi}\|_{1,h} 
    + \|\chi_{\boldsymbol{u}}\|_{1,h} \|\psi\|_1 + (\epsilon + \|\boldsymbol{u}\|_1) \|\chi_{\psi}\|_1  ,
\end{align*}
so
\begin{align*}
    \Big(1-\frac{2}{\hat{\alpha}}\|\psi\|_1\Big)\|\chi_{\boldsymbol{u}}\|_{1,h} + \|\chi_p\|+ \Big(1 - \frac{2}{\hat{\alpha}}\|\boldsymbol{u}\|_1 -\frac{2\epsilon}{\hat{\alpha}}\Big)\|\chi_{\psi}\|_1 
    \lesssim \frac{2}{\hat{\alpha}}\Big[(1+\|\psi_h\|_1) \|\xi_{\boldsymbol{u}}\|_{1,h} + \|\xi_p\| + (1+\|\boldsymbol{u}\|_1+\|\boldsymbol{u}_h\|_{1,h} + \epsilon)\|\xi_{\psi}\|_1\Big].
\end{align*}
Taking into account that $\boldsymbol{u}_h \in K_{1_h},\, \psi_h \in K_{2_h}$, the bounds on 
$\boldsymbol{u}$ and $\boldsymbol{\psi}$, and choosing $\varepsilon > 0$ sufficiently small, we obtain
\begin{align}\label{eq17}
    \|\chi_{\boldsymbol{u}}\|_{1,h} + \|\chi_p\|+ \|\chi_{\psi}\|_1\, \lesssim \|\xi_{\boldsymbol{u}}\|_{1,h} + \|\xi_p\| + \|\xi_{\psi}\|_1.
\end{align}
Applying triangular inequality in \eqref{de} and using \eqref{eq17}, we obtain
\begin{align*}
    \|(e_{\boldsymbol{u}}, e_p, e_{\psi})\|\leq 
    \|(\xi_{\boldsymbol{u}}, \xi_p, \xi_{\psi})\| + \|(\chi_{\boldsymbol{u}}, \chi_p, \chi_{\psi})\| \lesssim  \|(\xi_{\boldsymbol{u}}, \xi_p, \xi_{\psi})\|.
\end{align*}
Since $(\boldsymbol{z}_h,\zeta_h, \omega_h)\in \boldsymbol{V}_h \times Q_h \times \Phi_h$, are arbitrary, this concludes the proof.
\end{proof}
\medskip
\begin{theorem}
    Let $(\boldsymbol{u}, p, \psi) \in \boldsymbol{V} \times Q \times \Phi$ and 
$(\boldsymbol{u}_h, p_h, \psi_h) \in \boldsymbol{V}_h \times Q_h \times \Phi_h$
be the solutions of problems \eqref{wf} and \eqref{ns2}, respectively. Assume  \ref{(A1)}--\ref{(A3)}, and that $\|\boldsymbol{u}\|_1 \le M$ and $\|\psi\|_1 \le M$, where $M < \frac{\hat{\alpha}}{2}$. Moreover, suppose that
$\boldsymbol{u} \in \boldsymbol{V} \cap \boldsymbol{H}^{l+1}(\Omega), 
 p \in Q \cap H^{l}(\Omega), 
 \psi \in \Phi \cap H^{l+1}(\Omega)$ with $l \geq 1$.
Then there exists a constant $C = C(\mu, \beta, \Omega)$, independent of $h$, such that
    \begin{align*}
         \|(\boldsymbol{u}-\boldsymbol{u}_h, p-p_h,\psi-\psi_h)\| \le C h^l (|\boldsymbol{u}|_{l+1}+ |p|_{l} + |\psi|_{l+1}).
    \end{align*}
\end{theorem}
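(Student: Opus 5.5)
The estimate follows from the C\'ea-type bound of \Cref{lma3.3} combined with the interpolation estimates of \Cref{lma3.1}. Since the hypotheses here ($\|\boldsymbol{u}\|_1\le M$, $\|\psi\|_1\le M$ with $M<\hat{\alpha}/2$, and \ref{(A1)}--\ref{(A3)}) are exactly those of \Cref{lma3.3}, we have
\[
\|(\boldsymbol{u}-\boldsymbol{u}_h, p-p_h,\psi-\psi_h)\| \le C_{cea}\,\|(\boldsymbol{u}-\boldsymbol{z}_h, p-\zeta_h,\psi-\omega_h)\|
\]
for any admissible triple $(\boldsymbol{z}_h,\zeta_h,\omega_h)$. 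We choose $\boldsymbol{z}_h = I_h\boldsymbol{u}$ and $\omega_h = I_h\psi$, the Lagrange interpolants; these are well defined because $l\ge1$ gives $H^{l+1}\subset C^0$ for $n\le 3$. For the pressure we take $\zeta_h = I_h p - |\Omega|^{-1}\int_\Omega I_h p$, adjusted to zero mean. If $p$ is only $H^1$, so that its nodal interpolant may be undefined in 3D, we use a Scott--Zhang or Cl\'ement quasi-interpolant instead, which satisfies the same local estimates. Because $I_h$ preserves homogeneous Dirichlet traces on $\Gamma_D$ and $\Gamma$, we get $\boldsymbol{z}_h\in\boldsymbol{V}_h$ and $\omega_h\in\Phi_h$.

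The volume terms are then bounded elementwise by \Cref{lma3.1}. With degree $k+1$ for velocity and potential and $l\le k+1$,
\[
\|\nabla(\boldsymbol{u}-I_h\boldsymbol{u})\|\lesssim h^l|\boldsymbol{u}|_{l+1}, \qquad \|\nabla(\psi-I_h\psi)\|\lesssim h^l|\psi|_{l+1}.
\]
For the pressure, the $L^2$ estimate with degree $k$ gives $\|p-I_hp\|\lesssim h^{l}|p|_l$. The mean correction is controlled by $\|p-I_hp\|$ itself, because $\int_\Omega p=0$ implies $\bigl|\int_\Omega I_hp\bigr| = \bigl|\int_\Omega (I_hp-p)\bigr|\le|\Omega|^{1/2}\|p-I_hp\|$.

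The main obstacle is the Nitsche boundary term $\sum_{e\in\mathcal{E}_{\mathrm{Nav}}} h_e^{-1}\|(\boldsymbol{u}-I_h\boldsymbol{u})\cdot\boldsymbol{n}\|_{0,e}^2$, which appears in the discrete norm. Note that $\boldsymbol{u}\cdot\boldsymbol{n}=0$ on $\Gamma_{\mathrm{Nav}}$ does not make this term vanish, since $I_h\boldsymbol{u}\cdot\boldsymbol{n}$ need not be zero. We handle it with \Cref{lma2.4} applied on the element $K$ containing $e$, using $h_e\simeq h_K$ by shape regularity:
\[
h_e^{-1}\|\boldsymbol{u}-I_h\boldsymbol{u}\|_{0,e}^2\lesssim h_K^{-2}\|\boldsymbol{u}-I_h\boldsymbol{u}\|_{0,K}^2+|\boldsymbol{u}-I_h\boldsymbol{u}|_{1,K}^2\lesssim h_K^{2l}|\boldsymbol{u}|_{l+1,K}^2,
\]
where the last step uses both estimates of \Cref{lma3.1}. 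Summing over the Navier facets, and noting that each element is counted a bounded number of times, gives a contribution of order $h^{2l}|\boldsymbol{u}|_{l+1}^2$. This step is delicate only because of the norm choice. If the error norm instead contained normal-derivative or pressure traces, we would need additional $h_e\|\cdot\|_{0,e}^2$ bounds, obtained the same way from \Cref{lma2.4}.

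Collecting the velocity, pressure, potential and boundary contributions, taking square roots, and inserting them into the C\'ea bound gives the stated estimate. The constant $C$ depends on $C_{cea}$, the interpolation constants and shape regularity, and not on $h$.
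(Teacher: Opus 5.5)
Your proposal is correct and takes the same route as the paper, whose proof is simply the combination of the C\'ea-type bound of \Cref{lma3.3} with the interpolation estimates of \Cref{lma3.1}; you make explicit what the paper leaves implicit, namely the zero-mean correction of the pressure interpolant, the trace-inequality bound for the Nitsche term $h_e^{-1}\|(\boldsymbol{u}-I_h\boldsymbol{u})\cdot\boldsymbol{n}\|_{0,e}^2$, and the restriction $l\le k+1$ that the statement omits. One small slip: the nodal interpolant of a merely $H^1$ pressure is undefined in two dimensions as well, not only in three, but your Scott--Zhang fallback covers both cases.
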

\begin{proof}
    The proof follows directly from \Cref{lma3.1,lma3.3}.
\end{proof}

\section{A \textit{posteriori} error  boundes}\label{sec:post}
For each $K\in \mathcal{T}_h$ and each $e\in \mathcal{E}_h$, we define element-wise and facet-wise residuals as follows:
\begin{subequations}
    \begin{align}
        \boldsymbol{R}_{K} &= \Big\{ \boldsymbol{f}_h + [g_h-\mathcal{K}(\psi_h)]\boldsymbol{E} + \mu \Delta \boldsymbol{u}_h-\nabla p_h - \boldsymbol{u}_h\cdot\nabla \psi_h \boldsymbol{E}\Big\}|_K, \\[1.2ex]
        R_{1,K} &= \{\nabla \cdot\boldsymbol{u}_h\}|_K,\qquad R_{2,K} = \{g_h - \mathcal{K}(\psi_h)-\boldsymbol{u}_h\cdot\nabla\psi_h + \varepsilon\Delta\psi_h\}|_K, \\[1.2ex]
        \boldsymbol{R}_e &=
            \begin{cases}
            \displaystyle \frac{1}{2}
            \llbracket \bigl( p_h \boldsymbol{I} - \mu\nabla \boldsymbol{u}_h\bigr)\boldsymbol{n} \rrbracket,
            & \text{for } e \in \mathcal{E}_{\Omega}, \\[1.2ex]
            0, & \text{for } e \in \Gamma,
            \end{cases}\\[1.2ex]
        R_{1,e} &=
            \begin{cases}
            \displaystyle \frac{1}{2}
            \llbracket \varepsilon\nabla\psi_h\cdot \boldsymbol{n} \rrbracket,
            & \text{for } e \in \mathcal{E}_{\Omega}, \\[1ex]
            0, & \text{for } e \in \Gamma.
            \end{cases}\\[1.2ex]
        R_{J_K}^1 &=
            \begin{cases}
            \displaystyle 0,
            & \text{for } e \in \Gamma_D, \\[1ex]
            \sum\limits_{i=1}^{d-1} \Big(\mu \boldsymbol{n}^t \nabla \boldsymbol{u}_h \boldsymbol{\tau}^i + \beta \boldsymbol{u}_h\cdot\boldsymbol{\tau}^i \Big), & \text{for } e \in \Gamma_{\mathrm{Nav}}.
            \end{cases}\\[1.2ex]
        R_{J_K}^2 &=
            \begin{cases}
            \displaystyle 0,
            & \text{for } e \in \Gamma_D, \\[1ex]
              \boldsymbol{u}_h \cdot \boldsymbol{n} , & \text{for } e \in \Gamma_{\mathrm{Nav}}.
            \end{cases}
    \end{align}
\end{subequations}
Now we introduce the element-wise error estimator $\Psi_K^2 = \Psi_{R_K}^2 + \Psi_{e_K}^2 + \Psi_{J_K}^2$ with contributions defined as
\begin{subequations}\label{eqpsidf}
\begin{align}
\Psi_{R_K}^2
&= h_K^{2}\,\|\mathbf{R}_K\|_{0,K}^{2}\, +\, \|R_{1,K}\|_{0,K}^{2} \, + \,  h_K^{2}\,\|R_{2,K}\|_{0,K}^{2}, \\[1ex]
\Psi_{e_K}^2
&= \sum_{e\in\partial K}
h_e\bigl(\|\mathbf{R}_e\|_{0,e}^{2}
+ \|\mathbf{R}_{1,e}\|_{0,e}^{2}\bigl), \\
\Psi_{J_K}^2 &= \sum_{e \in \Gamma} \bigr( h_e \|  R_{J_K}^1 \|_{0,e}^2  + h_e^{-1} \|  R_{J_K}^2 \|^2_{0,e}
\bigr).
\end{align}
\end{subequations}
Let $\boldsymbol{f}_h$ and $g_h$ be piecewise polynomial representations of the data
$\boldsymbol{f}$ and $g$, respectively, which are allowed to be discontinuous
across element interfaces. We then define the associated data oscillation terms by
\begin{align}
\Theta_{1,K}^2 = h_K^2 \|\boldsymbol{f}-\boldsymbol{f}_h\|^2,
\qquad
\Theta_{2,K}^2 = h_K^2 \|g-g_h\|^2.
\end{align}
Consequently, a global \emph{a posteriori} error estimator and data oscillation error for the nonlinear coupled problem \(\eqref{ns2}\) are defined by
\begin{align}\label{eedo}
    \Psi = \left( \sum_{K\in\mathcal{T}_h} \Psi_K^{2} \right)^{1/2} \text{ and}\quad  \Theta = \left( \sum_{K\in\mathcal{T}_h} \Theta_{1,K}^{2} + \Theta_{2,K}^{2} \right)^{1/2}.
\end{align}
For any numerical approximation of \eqref{ns2}, the estimator $\Psi$ is both reliable and
efficient with respect to the energy norm $\|(\cdot\,,\cdot\,,\cdot)\|$ provided in \eqref{norm}.
The detailed proofs are presented in the subsequent sections. In what follows, we first concentrate on establishing the reliability of the estimator.
\subsection{Reliability}

\begin{theorem}[Global inf-sup stability]\label{thm4.1}
    Let $(\widetilde{\boldsymbol{u}}, \widetilde{\psi})  \in \boldsymbol{H}^1(\Omega) \times H^1(\Omega)$ satisfying $\|\widetilde{\boldsymbol{u}}\|_{1,h}\leq M$ and $\|\widetilde{\psi}\|_{1}\leq M$, for a sufficiently small $M>0$. For any $(\boldsymbol{u}, p, \psi) \in \boldsymbol{V} \times Q\times \Phi$, there exist an $(\boldsymbol{v}, q, \phi) \in \boldsymbol{V} \times Q\times \Phi$ with  $\|(\boldsymbol{v}, q, \phi)\|\lesssim 1$ such that 
    \begin{align*}
        \mathcal{A}_h^{(\widetilde{\boldsymbol{u}}, \widetilde{\psi})}(\boldsymbol{u},p,\psi;\boldsymbol{v},q,\phi) \gtrsim \|(\boldsymbol{u}, p, \psi)\|.
    \end{align*}
\end{theorem}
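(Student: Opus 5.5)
We will deduce the statement from the continuous inf-sup condition of \Cref{thm1.4} by a perturbation argument, the continuous analogue of the proof of \Cref{lma3.2}. Fix $(\boldsymbol{u},p,\psi)\in \boldsymbol{V}\times Q\times\Phi$, not all zero. Since $\boldsymbol{V}\subset\boldsymbol{H}^1(\Omega)$ satisfies $\boldsymbol{v}\cdot\boldsymbol{n}=0$ on $\Gamma_{\mathrm{Nav}}$, all Nitsche boundary terms vanish on $\boldsymbol{V}$. Concretely, $a_\gamma^\partial(\boldsymbol{u},\boldsymbol{v})=0$, both consistency terms $a_c^\partial$ vanish, and $b^\partial(\boldsymbol{v},q)=0$. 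Consequently $A_h=A$ and $B_h=B$ on $\boldsymbol{V}\times Q$. The norm \eqref{norm} also reduces to the continuous norm there, since the normal-trace penalty term is zero. Hence $\mathcal{C}_h$ coincides with $\mathcal{C}$ from \eqref{eqc} on $\boldsymbol{V}\times Q\times\Phi$, and
\[
\mathcal{A}_h^{(\widetilde{\boldsymbol{u}},\widetilde{\psi})}(\boldsymbol{u},p,\psi;\boldsymbol{v},q,\phi)=\mathcal{C}(\boldsymbol{u},p,\psi;\boldsymbol{v},q,\phi)+c_1(\boldsymbol{u};\widetilde{\psi},\boldsymbol{v})+c_2(\widetilde{\boldsymbol{u}};\psi,\phi).
\]
Checking this reduction carefully, including the boundary traces of $\nabla\boldsymbol{u}$, is the first step. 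Because the consistency terms carry the factor $\boldsymbol{n}\cdot\boldsymbol{v}$ or $\boldsymbol{n}\cdot\boldsymbol{u}$, they vanish without any extra regularity of $\boldsymbol{u}$.

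Next we invoke \Cref{thm1.4}. By the definition of the supremum, there exists $(\boldsymbol{v}^*,q^*,\phi^*)$ with $\mathcal{C}(\boldsymbol{u},p,\psi;\boldsymbol{v}^*,q^*,\phi^*)\ge \tfrac{\alpha}{2}\|(\boldsymbol{u},p,\psi)\|\,\|(\boldsymbol{v}^*,q^*,\phi^*)\|$. Normalizing so that $\|(\boldsymbol{v},q,\phi)\|=1$ gives $\mathcal{C}(\boldsymbol{u},p,\psi;\boldsymbol{v},q,\phi)\ge\tfrac{\alpha}{2}\|(\boldsymbol{u},p,\psi)\|$.

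We then bound the perturbation using \Cref{lma1.1}, or \Cref{lma2.5} since $\|\widetilde{\boldsymbol{u}}\|_{1,h}$ is the quantity controlled. This gives
\[
|c_1(\boldsymbol{u};\widetilde{\psi},\boldsymbol{v})|\le \bar{E}C_{\mathrm{Sob}}^2\|\boldsymbol{u}\|_1\|\widetilde{\psi}\|_1\|\boldsymbol{v}\|_1\le \bar{E}C_{\mathrm{Sob}}^2 M\|(\boldsymbol{u},p,\psi)\|,
\]
and
\[
|c_2(\widetilde{\boldsymbol{u}};\psi,\phi)|\le C_{\mathrm{Sob}}^2\|\widetilde{\boldsymbol{u}}\|_{1,h}\|\psi\|_1\|\phi\|_1\le C_{\mathrm{Sob}}^2M\|(\boldsymbol{u},p,\psi)\|.
\]
The Sobolev embedding constants apply to $\widetilde{\boldsymbol{u}}\in\boldsymbol{H}^1(\Omega)$, which need not vanish on the boundary. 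There the $\|\cdot\|_{1,h}$ seminorm must control the full $L^4$ norm, and this is the one point needing care. I would use the Poincaré–Friedrichs type inequality underlying the constant $C_p$, valid for $\boldsymbol{H}^1$ functions whose normal trace on $\Gamma_{\mathrm{Nav}}$ is penalized in $\|\cdot\|_{1,h}$ (or vanishing on $\Gamma_D$). Failing that, I would simply interpret $\|\widetilde{\boldsymbol{u}}\|_{1,h}$ as dominating $\|\widetilde{\boldsymbol{u}}\|_1$ and absorb the constant. Using $\|\psi\|_1\lesssim\|\nabla\psi\|$ for $\psi\in H_0^1$, we obtain
\[
\mathcal{A}_h^{(\widetilde{\boldsymbol{u}},\widetilde{\psi})}(\boldsymbol{u},p,\psi;\boldsymbol{v},q,\phi)\ge\Big(\tfrac{\alpha}{2}-C(\bar{E}+1)C_{\mathrm{Sob}}^2M\Big)\|(\boldsymbol{u},p,\psi)\|.
\]

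Finally, choosing $M$ small enough that $C(\bar{E}+1)C_{\mathrm{Sob}}^2M\le\alpha/4$ yields the claim with constant $\alpha/4$ and test function of unit norm. The main obstacle is not this absorption, which is routine. It is the first step: verifying that the discrete Nitsche form restricted to the continuous spaces is exactly $\mathcal{C}$, with the norms identified. Closely tied to it is the uniform Sobolev control of the frozen coefficient $\widetilde{\boldsymbol{u}}$, which lies only in $\boldsymbol{H}^1(\Omega)$, by $\|\widetilde{\boldsymbol{u}}\|_{1,h}$.
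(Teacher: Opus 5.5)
Your proof is correct, but it takes a different route from the paper. The paper does not invoke \Cref{thm1.4}; it constructs the test function by hand:
\begin{itemize}
\item It tests with $(\boldsymbol{u},-p,\psi)$. This cancels the two $B_h$ terms and, since $A_h(\boldsymbol{u},\boldsymbol{u})\ge\mu\|\nabla\boldsymbol{u}\|^2$ on $\boldsymbol{V}$, yields $(\mu-M\bar{E}C_p^2)\|\nabla\boldsymbol{u}\|^2+(\varepsilon-MC_p)\|\nabla\psi\|^2$.
\item It takes $\boldsymbol{v}\in\boldsymbol{V}$ from the inf-sup condition for $B$ (\Cref{lma1.3}), with $B(\boldsymbol{v},p)\ge\beta_1\|p\|^2$ and $\|\boldsymbol{v}\|_1\le\|p\|$. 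The cross terms $A_h(\boldsymbol{u},\boldsymbol{v})+c_1(\boldsymbol{u};\widetilde{\psi},\boldsymbol{v})$ are handled by Young's inequality.
\item It tests with $(\boldsymbol{u}+\delta\boldsymbol{v},-p,\psi)$ for small $\delta$, and checks that this function has norm $\lesssim\|(\boldsymbol{u},p,\psi)\|$.
\end{itemize}
You instead identify $\mathcal{C}_h$ with $\mathcal{C}$ on the continuous spaces, use \Cref{thm1.4} as a black box, and absorb $c_1,c_2$ as a perturbation, mirroring \Cref{lma3.2}.

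Your route is shorter and makes the parallel with the discrete estimate transparent. The paper's route gives explicit smallness thresholds in $\mu$, $\varepsilon$, $\bar{E}$, $C_p$, $\beta_1$, rather than in terms of the abstract constant $\alpha$.

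The paper's diagonal choice $\phi=\psi$ also helps on exactly the point you flagged. There $c_2$ enters only as $c_2(\widetilde{\boldsymbol{u}};\psi,\psi)=-\tfrac12\int_\Omega(\operatorname{div}\widetilde{\boldsymbol{u}})\,\psi^2\,dx$. This is bounded by $C\|\nabla\widetilde{\boldsymbol{u}}\|\,\|\nabla\psi\|^2$, hence by $\|\widetilde{\boldsymbol{u}}\|_{1,h}$ alone. The paper does not write this out, but it is how its bound $-MC_p\|\nabla\psi\|^2$ can be justified.

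With your abstract $\phi$ you genuinely need $\|\widetilde{\boldsymbol{u}}\|_{L^4}\lesssim\|\widetilde{\boldsymbol{u}}\|_{1,h}$. This fails for general $\widetilde{\boldsymbol{u}}\in\boldsymbol{H}^1(\Omega)$: a constant field tangent to a flat $\Gamma_{\mathrm{Nav}}$ has $\|\widetilde{\boldsymbol{u}}\|_{1,h}=0$. It is harmless in the application $\widetilde{\boldsymbol{u}}=\boldsymbol{u}_h\in\boldsymbol{V}_h$, and easy to repair in your framework. Since $\mathcal{C}$ is block-diagonal in $\psi$, take $(\boldsymbol{v},q)$ from \Cref{thm1.4} applied with $\psi=0$, and take $\phi$ proportional to $\psi$.

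Finally, the vanishing of the consistency terms on $\boldsymbol{V}$ is a convention rather than something to verify. The factor $\boldsymbol{n}^t\nabla\boldsymbol{u}\,\boldsymbol{n}$ has no trace for $\boldsymbol{u}\in\boldsymbol{H}^1(\Omega)$. The paper adopts the same convention when it writes $A_h(\boldsymbol{u},\boldsymbol{u})$ without boundary terms.
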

\begin{proof}
For any $(\boldsymbol{u},p,\psi)\in \boldsymbol{V} \times Q\times \Phi$, there holds
\begin{align*}
    \mathcal{A}_h^{(\widetilde{\boldsymbol{u}}, \widetilde{\psi})}(\boldsymbol{u}, p, \psi;\boldsymbol{u}, -p, \psi) &= A_h(\boldsymbol{u},\boldsymbol{u}) + d(\psi,\psi) +  c_1(\boldsymbol{u};\widetilde{\psi},\boldsymbol{u})+ c_2(\widetilde{\boldsymbol{u}};\psi,\psi),\\
    &= \mu (\nabla \boldsymbol{u}, \nabla \boldsymbol{u}) +\sum\limits_{e \in \mathcal{E}_{\mathrm{Nav}}} \sum\limits_{i=1}^{d-1} \beta \|\boldsymbol{u}\cdot\boldsymbol{\tau}^i\|^2_{0,e}\, + \,\varepsilon\|\nabla \psi\|^2+ (\boldsymbol{u}\cdot\nabla\widetilde{\psi}\boldsymbol{E},\boldsymbol{v})+ (\widetilde{\boldsymbol{u}}\cdot\nabla\psi,\psi),\\
    &\ge \mu \|\nabla\boldsymbol{u}\|^2 +  \sum\limits_{i=1}^{d-1} \beta \|\boldsymbol{u}\cdot\boldsymbol{\tau}^i\|^2_{0,\mathcal{E}_{\mathrm{Nav}}}\, + \,\varepsilon\|\nabla \psi\|^2 - M\bar{E}C_p^2\|\nabla \boldsymbol{u}\|^2- MC_p\|\nabla \psi\|^2,\\
    &\ge (\mu - M\bar{E}C_p^2) \|\nabla\boldsymbol{u}\|^2 +  \,(\varepsilon- MC_p)\|\nabla \psi\|^2.
\end{align*}
Applying the inf-sup condition, we get that for any $p\in Q$, there exists $\boldsymbol{v}\in \boldsymbol{V}$ such that
\[
B_h(\boldsymbol{v},p)\ge \beta_1 \|p\|^2,
\qquad
\|\boldsymbol{v}\|_{1}\le \|p\|,
\]
where $\beta_1>0$ is the inf-sup constant depending only on $\Omega$. Then, we have
\begin{align*}
  \mathcal{A}_h^{(\widetilde{\boldsymbol{u}}, \widetilde{\psi})}(\boldsymbol{u}, p, \psi;\boldsymbol{v}, 0, 0) 
&= A_h(\boldsymbol{u},\boldsymbol{v})
+ B_h(\boldsymbol{v},p) + c_1(\boldsymbol{u};\widetilde{\psi},\boldsymbol{v}),\\
&\ge \beta_1 \|p\|^2
- |A_h(\boldsymbol{u},\boldsymbol{v})| - |c_1(\boldsymbol{u};\widetilde{\psi},\boldsymbol{v})|, \\
&\ge \beta_1 \|p\|^2 - \mu \|\nabla\boldsymbol{u}\|\|\nabla\boldsymbol{v}\|
- \sum\limits_{i=1}^{d-1} \beta \int_{\mathcal{E}_{\mathrm{Nav}}}\abs{(\boldsymbol{u}\cdot\boldsymbol{\tau}^i)(\boldsymbol{v}\cdot\boldsymbol{\tau}^i)} \, \dd{s}- C_p^2M\bar{E}\|\nabla\boldsymbol{u}\|\|\nabla\boldsymbol{v}\|, \\
&\ge \beta_1 \|p\|^2 - \Big(\mu + \beta C_{trc} + C_p^2M\bar{E}\Big)\|\nabla\boldsymbol{u}\|\|\nabla\boldsymbol{v}\|, \\
&\ge \Big(\beta - \frac{1}{2\epsilon}\Big) \|p\|^2 - \frac{\epsilon}{2}\Big(\mu + \beta C_{trc} + C_p^2M\bar{E}\Big)^2\|\nabla\boldsymbol{u}\|^2,
\end{align*}
where $\epsilon>0$ will choose later according to our need. Now, we introduce $\delta>0$ such that
\begin{align*}
 \mathcal{A}_h^{(\widetilde{\boldsymbol{u}}, \widetilde{\psi})}(\boldsymbol{u},p,\psi;\boldsymbol{u}+\delta\boldsymbol{v},-p,\psi)
&=  \mathcal{A}_h^{(\widetilde{\boldsymbol{u}}, \widetilde{\psi})}(\boldsymbol{u}, p, \psi;\boldsymbol{u}, -p, \psi) + \delta  \mathcal{A}_h^{(\widetilde{\boldsymbol{u}}, \widetilde{\psi})}(\boldsymbol{u}, p, \psi;\boldsymbol{v}, 0, 0), \\
&\ge \bigg[\mu - M\bar{E}C_p^2 - \delta \Big(\frac{\epsilon}{2}\Big(\mu + \beta C_{trc} + C_p^2M\bar{E}\Big)^2\Big)\bigg]\|\nabla\boldsymbol{u}\|^2\\
&\qquad+ \delta\Big(\beta_1 - \frac{1}{2\epsilon}\Big) \|p\|^2 + ( \varepsilon - MC_p)\|\nabla\psi\|^2 .
\end{align*}
Choosing $\epsilon=\dfrac{1}{\beta_1}$ and for sufficiently small $\delta$ and $M$ , we obtain
\begin{align*}
   \mathcal{A}_h^{(\widetilde{\boldsymbol{u}}, \widetilde{\psi})}(\boldsymbol{u},p,\psi;\boldsymbol{u}+\delta\boldsymbol{v},-p,\psi)
& \gtrsim \Big( \|\nabla\boldsymbol{u}\|^2 + \|p\|^2 +\|\nabla\psi\|^2\Big).
\end{align*}
Finally, using the triangle inequality, we assert that
\begin{align*}
\|(\boldsymbol{u}+\delta\boldsymbol{v},-p,\psi)\|^2
&= \|\nabla\boldsymbol{u}+\delta\nabla\boldsymbol{v}\|^2
+ \|p\|^2
+ \|\nabla\psi\|^2,\\
&\le 2\bigl(\|\nabla\boldsymbol{u}\|^2+\delta^2\|\nabla\boldsymbol{v}\|^2\bigr)
+ \|p\|^2
+ \|\psi\|_{1}^2, \\
&\le \max\{2,1+2\delta^2\}
\bigl(\|\nabla\boldsymbol{u}\|^2+\|p\|^2+ \|\nabla\psi\|^2\bigr),\\
& \lesssim \| (\boldsymbol{u},p, \psi)\|^2.
\end{align*}
This concludes the proof.
\end{proof}

Since $\boldsymbol{V}_h$ is not a conforming space. Now, we define the conforming space 
$\boldsymbol{V}_h^c = \boldsymbol{V}_h \cap \boldsymbol{V}$. Finally, we decompose the approximate velocity uniquely into 
$\boldsymbol{u}_h = \boldsymbol{u}_h^c + \boldsymbol{u}_h^r$, where 
$\boldsymbol{u}_h^c \in V_h^c$ and $\boldsymbol{u}_h^r \in (V_h^c)^\perp$, and we note that 
$\boldsymbol{u}_h^r = \boldsymbol{u}_h - \boldsymbol{u}_h^c \in V_h$.
\begin{lemma}\label{lma5.2}
    There holds
\[
\|\boldsymbol{u}_h^r\|_{1,h} \le C_r \left( \sum_{K \in \mathcal{T}_h}  h_e^{-1} \|  R_{J_K}^2 \|^2_{0,e}\right)^{1/2}.
\]
\end{lemma}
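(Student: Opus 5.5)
We need to bound the nonconforming part $\boldsymbol{u}_h^r$ by the normal-trace residual $\boldsymbol{u}_h\cdot\boldsymbol{n}$ on $\Gamma_{\mathrm{Nav}}$; recall $R_{J_K}^2=\boldsymbol{u}_h\cdot\boldsymbol{n}$ on Navier facets and vanishes on $\Gamma_D$. The orthogonal complement is taken in $\boldsymbol{V}_h$ with respect to the inner product inducing $\|\cdot\|_{1,h}$. Hence $\boldsymbol{u}_h^c$ is the best approximation of $\boldsymbol{u}_h$ in $\boldsymbol{V}_h^c$, and
\[
\|\boldsymbol{u}_h^r\|_{1,h}=\inf_{\boldsymbol{w}_h\in \boldsymbol{V}_h^c}\|\boldsymbol{u}_h-\boldsymbol{w}_h\|_{1,h}.
\]
So it suffices to construct one explicit $\boldsymbol{w}_h\in\boldsymbol{V}_h^c$ with $\|\boldsymbol{u}_h-\boldsymbol{w}_h\|_{1,h}$ controlled by the estimator term. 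Note that $\|\boldsymbol{u}_h-\boldsymbol{w}_h\|_{1,h}^2=\|\nabla(\boldsymbol{u}_h-\boldsymbol{w}_h)\|^2+\sum_e h_e^{-1}\|\boldsymbol{u}_h\cdot\boldsymbol{n}\|_{0,e}^2$, because $\boldsymbol{w}_h\cdot\boldsymbol{n}=0$. The boundary part is therefore already exactly the right-hand side, and only the gradient part remains to be bounded.

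For the construction, I assume the Navier boundary is a union of flat pieces, so $\boldsymbol{n}$ is constant on each facet. I define $\boldsymbol{w}_h$ by modifying the Lagrange degrees of freedom of $\boldsymbol{u}_h$ only at nodes $z$ lying on $\Gamma_{\mathrm{Nav}}$:
\[
\boldsymbol{w}_h(z)=\boldsymbol{u}_h(z)-(\boldsymbol{u}_h(z)\cdot\boldsymbol{n}_z)\,\boldsymbol{n}_z .
\]
Here $\boldsymbol{n}_z$ is the common normal. At corner nodes where two non-parallel Navier faces meet, or at nodes touching $\Gamma_D$, I remove both normal components, respectively set $\boldsymbol{w}_h(z)=\boldsymbol{0}$. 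All other nodal values are kept. Then $\boldsymbol{w}_h\in\boldsymbol{V}_h$, $\boldsymbol{w}_h\cdot\boldsymbol{n}=0$ on each Navier facet (a polynomial vanishing at all facet nodes), and $\boldsymbol{w}_h=\boldsymbol{0}$ on $\Gamma_D$, so $\boldsymbol{w}_h\in\boldsymbol{V}_h^c$.

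The difference $\boldsymbol{d}_h=\boldsymbol{u}_h-\boldsymbol{w}_h$ is supported on elements $K$ touching $\Gamma_{\mathrm{Nav}}$, and its nodal values are $(\boldsymbol{u}_h(z)\cdot\boldsymbol{n}_z)\boldsymbol{n}_z$ at boundary nodes. By the inverse inequality (\Cref{lma2.2}) and norm equivalence on the reference element,
\[
\|\nabla\boldsymbol{d}_h\|_{0,K}^2\lesssim h_K^{-2}\|\boldsymbol{d}_h\|_{0,K}^2\lesssim h_K^{n-2}\sum_{z\in\bar K\cap\Gamma_{\mathrm{Nav}}}|\boldsymbol{u}_h(z)\cdot\boldsymbol{n}_z|^2 .
\]
For a node on facet $e\subset\partial K$, discrete norm equivalence on $e$ (dimension $n-1$) gives $h_e^{n-1}\sum_{z\in e}|\boldsymbol{u}_h(z)\cdot\boldsymbol{n}|^2\lesssim\|\boldsymbol{u}_h\cdot\boldsymbol{n}\|_{0,e}^2$. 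With $h_K\simeq h_e$ by shape regularity, this yields $\|\nabla\boldsymbol{d}_h\|_{0,K}^2\lesssim h_e^{-1}\|R^2_{J_{K}}\|_{0,e}^2$. Summing over the boundary patch uses finite overlap, and the result follows with $C_r$ depending only on shape regularity and $k$.

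The main obstacle is the treatment of nodes shared by facets with different normals, or with $\Gamma_D$. There the nodal value must be controlled by $\boldsymbol{u}_h\cdot\boldsymbol{n}$ on the adjacent facets. When the normals $\boldsymbol{n}_1,\boldsymbol{n}_2$ are linearly independent, I would write the removed component in the dual basis and bound it by $|\boldsymbol{u}_h(z)\cdot\boldsymbol{n}_1|+|\boldsymbol{u}_h(z)\cdot\boldsymbol{n}_2|$, with a constant depending on the corner angle, hence only on $\Omega$. Nodes at the interface with $\Gamma_D$ cause no difficulty, since $\boldsymbol{u}_h$ already vanishes there by continuity.
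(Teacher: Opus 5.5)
Your proof is correct, but it takes a different route from the paper. The paper disposes of the lemma in one line by citing \cite[Lemma 11]{bansal2026equal}; you prove it directly. Your argument has two ingredients. First, the best-approximation characterization of $\boldsymbol{u}_h^r$, together with the observation that the boundary part of $\|\boldsymbol{u}_h^r\|_{1,h}$ is exactly the estimator term because $\boldsymbol{u}_h^c\cdot\boldsymbol{n}=0$. Second, an explicit conforming competitor $\boldsymbol{w}_h$, obtained by removing the normal components of the nodal values on $\Gamma_{\mathrm{Nav}}$ and controlled by inverse estimates and discrete norm equivalence on facets.

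The citation buys brevity. Your route makes explicit hypotheses the paper leaves unstated:
\begin{itemize}
\item the orthogonal complement must be taken in the $\|\cdot\|_{1,h}$ (or $\boldsymbol{H}^1$) inner product;
\item $\Gamma_{\mathrm{Nav}}$ must be a union of flat pieces that the mesh resolves exactly;
\item $C_r$ depends not only on shape regularity and $k$, but also on the angles between distinct Navier faces, through your dual-basis bound at corners.
\end{itemize}
That last dependence degenerates when adjacent facet normals are nearly parallel. So your argument covers exactly triangulated polygonal or polyhedral Navier boundaries, but not polygonal approximations of curved ones such as the circular obstacle in the numerical section. Your construction also shows that orthogonality is dispensable. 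The reliability proof only needs some $\boldsymbol{u}_h^c\in\boldsymbol{V}_h^c$ satisfying this bound, and $\boldsymbol{w}_h$ itself is such a function.

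One step should be tightened. A node $z\in\bar K\cap\Gamma_{\mathrm{Nav}}$ need not lie on a facet of $K$, since an element may touch $\Gamma_{\mathrm{Nav}}$ only at a vertex (or along an edge in three dimensions). In that case the facet $e$ used to bound $|\boldsymbol{u}_h(z)\cdot\boldsymbol{n}_e|$ belongs to a neighbouring element. The relation $h_K\simeq h_e$ then comes from the local quasi-uniformity of shape-regular meshes, not from $e\subset\partial K$. Your finite-overlap remark covers this, but you should say so explicitly.
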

\begin{proof}
    It follows straightforwardly from the decomposition 
$\boldsymbol{u}_h = \boldsymbol{u}_h^r + \boldsymbol{u}_h^c$ and \cite[Lemma 11]{bansal2026equal}.
\end{proof} 
\begin{theorem}
Let $(\boldsymbol{u},p,\psi)$ be the weak solution of \eqref{wf} and
$(\boldsymbol{u}_h,p_h,\psi_h)$ be its discrete approximation defined by \eqref{ns2}.
Let $\Psi$ denotes the error estimator term, as defined in \eqref{eedo}.
Assume  \ref{(A1)}--\ref{(A3)}, and that $\|\boldsymbol{u}\|_{\infty}\le M$ and $\|\psi\|_{1,\infty}< M$ for sufficiently small $M$, then there exists a constant $C>0$ such that the following estimate holds:
\begin{align}\label{thm2:eq}
    \|(e^{\boldsymbol{u}},e^p,e^\psi)\| &\le C\Big( \Psi + \| \boldsymbol{f} - \boldsymbol{f}_h \| + \,\| g - g_h \| \Big),
\end{align}
for all $(\boldsymbol{v},q,\phi)$ satisfying $\|(\boldsymbol{v},q,\phi)\|\le 1$.
\end{theorem}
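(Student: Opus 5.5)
The plan is the standard residual-based reliability argument for Nitsche methods. We split off the nonconforming part of the discrete velocity and apply the continuous global inf-sup stability of \Cref{thm4.1} to the conforming part of the error.

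Write $\boldsymbol{u}_h=\boldsymbol{u}_h^c+\boldsymbol{u}_h^r$ as before. The triangle inequality gives
\[
\|(e^{\boldsymbol{u}},e^p,e^\psi)\|\le \|(\boldsymbol{u}-\boldsymbol{u}_h^c,e^p,e^\psi)\|+\|\boldsymbol{u}_h^r\|_{1,h}.
\]
By \Cref{lma5.2}, the last term is bounded by $C_r$ times the $R_{J_K}^2$ part of $\Psi_{J_K}$. So it remains to estimate the conforming error $(\boldsymbol{e}^c,e^p,e^\psi):=(\boldsymbol{u}-\boldsymbol{u}_h^c,p-p_h,\psi-\psi_h)\in\boldsymbol{V}\times Q\times\Phi$.

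\textbf{Step 1: linearization and inf-sup.} We apply \Cref{thm4.1} with frozen coefficients $(\widetilde{\boldsymbol{u}},\widetilde{\psi})=(\boldsymbol{u}_h^c,\psi)$, or $(\boldsymbol{u},\psi)$, whichever makes the difference of convective terms exactly linear. The smallness hypotheses $\|\boldsymbol{u}\|_\infty\le M$ and $\|\psi\|_{1,\infty}<M$ are what make these coefficients admissible in \Cref{thm4.1}. We then use the splits
\[
c_1(\boldsymbol{u};\psi,\boldsymbol{v})-c_1(\boldsymbol{u}_h;\psi_h,\boldsymbol{v})=c_1(\boldsymbol{e}^{\boldsymbol{u}};\psi,\boldsymbol{v})+c_1(\boldsymbol{u}_h;e^\psi,\boldsymbol{v})
\]
and the analogous identity for $c_2$. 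This yields $(\boldsymbol{v},q,\phi)$ with $\|(\boldsymbol{v},q,\phi)\|\lesssim1$ such that $\|(\boldsymbol{e}^c,e^p,e^\psi)\|\lesssim \mathcal{A}_h^{(\cdot)}(\boldsymbol{e}^c,e^p,e^\psi;\boldsymbol{v},q,\phi)$. The monotone term $(\mathcal{K}(\psi)-\mathcal{K}(\psi_h),\phi)$ is handled either through \ref{(A2)}, which makes it nonnegative when $\phi$ is aligned with $e^\psi$, or as a Lipschitz term absorbed by smallness. The terms carrying $\boldsymbol{u}_h^r$ are moved to the right-hand side and bounded by $\|\boldsymbol{u}_h^r\|_{1,h}$, hence by $\Psi$ via \Cref{lma5.2}.

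\textbf{Step 2: residual representation.} We use the continuous equations \eqref{wf} for $(\boldsymbol{u},p,\psi)$ and subtract the discrete equations \eqref{ns2} tested with $(I_h^{SZ}\boldsymbol{v},I_hq,I_h^{SZ}\phi)$. Here $I_h^{SZ}$ is a Scott--Zhang/Clément interpolant preserving the boundary conditions, and $(I_h\boldsymbol{v},I_hq,I_h\phi)$ is a legitimate discrete test function because $\boldsymbol{V}\subset$ the Nitsche setting. This Galerkin-orthogonality step leaves residuals against $\boldsymbol{v}-I_h\boldsymbol{v}$ and $\phi-I_h\phi$. Integrating by parts elementwise gives:
\begin{itemize}
\item volume terms $(\boldsymbol{R}_K,\boldsymbol{v}-I_h\boldsymbol{v})_K$, $(R_{1,K},q)_K$ and $(R_{2,K},\phi-I_h\phi)_K$;
\item interior facet terms with $\boldsymbol{R}_e$ and $R_{1,e}$;
\item Navier boundary terms $\int_e R_{J_K}^1\,(\boldsymbol{v}-I_h\boldsymbol{v})\cdot\boldsymbol{\tau}$;
\item Nitsche terms involving $\boldsymbol{u}_h\cdot\boldsymbol{n}=R_{J_K}^2$ and $\boldsymbol{n}^t(\mu\nabla\boldsymbol{v}_h-q_h\boldsymbol{I})\boldsymbol{n}$.
\end{itemize}
Since $\boldsymbol{v}\cdot\boldsymbol{n}=0$ on $\Gamma_{\mathrm{Nav}}$, the normal stress term of the exact solution drops out.

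\textbf{Step 3: estimation.} We bound each term using Cauchy--Schwarz, the local interpolation estimates $\|\boldsymbol{v}-I_h\boldsymbol{v}\|_{0,K}\lesssim h_K\|\nabla\boldsymbol{v}\|_{\omega_K}$ and $\|\boldsymbol{v}-I_h\boldsymbol{v}\|_{0,e}\lesssim h_e^{1/2}\|\nabla\boldsymbol{v}\|_{\omega_e}$, and finite overlap of patches. The Nitsche consistency terms are bounded with \Cref{lma2.3,lma2.2} as $h_e^{-1/2}\|R^2_{J_K}\|_{0,e}\,h_e^{1/2}\|\nabla I_h\boldsymbol{v}\|_{0,e}\lesssim (h_e^{-1}\|R_{J_K}^2\|^2)^{1/2}$. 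Replacing $\boldsymbol{f},g$ by $\boldsymbol{f}_h,g_h$ produces the oscillation terms $\|\boldsymbol{f}-\boldsymbol{f}_h\|+\|g-g_h\|$.

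\textbf{Main obstacle.} The delicate step is the nonlinear and coupled structure: the convective terms and $\mathcal{K}$ must be rewritten so that only terms linear in the error remain, with small coefficients absorbable on the left. The same applies to the mismatch between $\boldsymbol{u}_h$ and $\boldsymbol{u}_h^c$ inside $c_1$ and $c_2$. I would first try to use the $L^\infty$/$W^{1,\infty}$ smallness of $\boldsymbol{u}$ and $\psi$ together with \Cref{lma2.5}, bounding the terms with $\boldsymbol{u}_h^r$ by $\|\boldsymbol{u}_h^r\|_{1,h}$ and again by \Cref{lma5.2}. Combining Steps 1–3 gives \eqref{thm2:eq}.
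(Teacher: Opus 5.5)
Your architecture matches the paper's: the split $\boldsymbol{u}_h=\boldsymbol{u}_h^c+\boldsymbol{u}_h^r$, \Cref{thm4.1} for the conforming error, \Cref{lma5.2} for $\boldsymbol{u}_h^r$, then Galerkin orthogonality with elementwise integration by parts. Your explicit treatment of the Navier and Nitsche boundary terms is more careful than the paper's, which only tracks interior jumps.

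The gap is in Step~1, at exactly the point you leave open: you linearize at the wrong pair. Since $\mathcal{A}_h^{(\widetilde{\boldsymbol{u}},\widetilde{\psi})}$ freezes only one slot of each trilinear form, no choice makes the convective differences ``exactly linear''; one cross term from each always remains. With $(\widetilde{\boldsymbol{u}},\widetilde{\psi})=(\boldsymbol{u},\psi)$ the leftovers are $c_1(\boldsymbol{u}_h;e^{\psi},\boldsymbol{v})$ (already visible in your own identity) and $c_2(e^{\boldsymbol{u}};\psi_h,\phi)$. Their coefficients are the discrete solution, which the hypotheses $\|\boldsymbol{u}\|_\infty\le M$, $\|\psi\|_{1,\infty}<M$ do not control. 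Substituting $\boldsymbol{u}_h=\boldsymbol{u}-e^{\boldsymbol{u}}$ and $\psi_h=\psi-e^{\psi}$ only creates the quadratic terms $c_1(e^{\boldsymbol{u}};e^{\psi},\boldsymbol{v})$ and $c_2(e^{\boldsymbol{u}};e^{\psi},\phi)$, which cannot be absorbed in an a posteriori bound. In addition, \Cref{thm4.1} asks for $\|\widetilde{\boldsymbol{u}}\|_{1,h}\le M$, which $\|\boldsymbol{u}\|_\infty\le M$ does not give, and nothing makes $\boldsymbol{u}_h^c$ small.

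The paper instead freezes at the discrete pair $(\boldsymbol{u}_h,\psi_h)$, for which
\begin{gather*}
c_1(\boldsymbol{u};\psi,\boldsymbol{v})-c_1(\boldsymbol{u}_h;\psi_h,\boldsymbol{v})=c_1(e^{\boldsymbol{u}};\psi_h,\boldsymbol{v})+c_1(\boldsymbol{u};e^{\psi},\boldsymbol{v}),\\
c_2(\boldsymbol{u};\psi,\phi)-c_2(\boldsymbol{u}_h;\psi_h,\phi)=c_2(\boldsymbol{u}_h;e^{\psi},\phi)+c_2(e^{\boldsymbol{u}};\psi,\phi).
\end{gather*}
The first terms on the right belong to $\mathcal{A}_h^{(\boldsymbol{u}_h,\psi_h)}(e^{\boldsymbol{u}},e^p,e^{\psi};\cdot)$. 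The leftovers are bounded by $\bar{E}\|\boldsymbol{u}\|_\infty\|\nabla e^{\psi}\|\|\boldsymbol{v}\|$ and $\|\nabla\psi\|_\infty\|e^{\boldsymbol{u}}\|\|\phi\|$, i.e.\ by $M$ times the error, and are absorbed; this is the content of \eqref{eq10}--\eqref{eq12}. That is why the hypotheses are $L^\infty$/$W^{1,\infty}$ bounds on the exact solution. Admissibility of $(\boldsymbol{u}_h,\psi_h)$ in \Cref{thm4.1} then has to come from smallness of the discrete solution, as in the small-data regime of \Cref{sec:dp}. Your route can only be closed by importing those discrete bounds ($\boldsymbol{u}_h\in\boldsymbol{W}$ and \eqref{eqpsig}) into \Cref{lma2.5} for the leftovers, not by the smallness of $(\boldsymbol{u},\psi)$ you invoke.

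A smaller point concerns the nonlinearity $\mathcal{K}$. Monotonicity disposes only of $(\mathcal{K}(\psi)-\mathcal{K}(\psi_h),\phi)$, and indeed $\phi=e^{\psi}$ in the test triple built in \Cref{thm4.1}. The momentum coupling $((\mathcal{K}(\psi)-\mathcal{K}(\psi_h))\boldsymbol{E},\boldsymbol{v})$ is only bounded by $\bar{E}\,\overline{K}\,\|e^{\psi}\|\,\|\boldsymbol{v}\|$. Since $\bar{E}\,\overline{K}$ is not a small parameter, ``absorbed by smallness'' does not apply. The paper's $\epsilon$-splitting of this term in $T_1$ does not absorb it either, because the $\epsilon$ cancels. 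One clean way around it is sequential:
\begin{itemize}
\item First bound $\|e^{\psi}\|_1$ from the potential equation alone, whose coupling to the flow error is only the $M$-small term $c_2(e^{\boldsymbol{u}};\psi,\phi)$.
\item Then insert this into the velocity--pressure estimate. There $\bar{E}\,\overline{K}\|e^{\psi}\|$ becomes estimator terms plus $M\bar{E}\,\overline{K}\|e^{\boldsymbol{u}}\|$, which is absorbable for small $M$.
\end{itemize}
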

\begin{proof}
Using $\boldsymbol{u}_h = \boldsymbol{u}_h^r + \boldsymbol{u}_h^c,\, e^{\boldsymbol{u}} = e^{\boldsymbol{u}}_c- \boldsymbol{u}^r_h $ 
and the triangle inequality implies
\begin{align*}
    \| (e^{\boldsymbol{u}},e^p,e^\psi)\| \le \| (e^{\boldsymbol{u}}_c,e^p,e^\psi)\|\, +\, \|\boldsymbol{u}^r_h\|_{1,h},
\end{align*}
where $(e^{\boldsymbol{u}}_c,e^p,e^\psi)\in \boldsymbol{V}\times Q \times \Phi$. Then from \Cref{thm4.1}. We have
\begin{align}
    C\| (e^{\boldsymbol{u}}_c,e^p,e^\psi)\| &\le \mathcal{A}_h^{(\boldsymbol{u}_h, \psi_h)}(e^{\boldsymbol{u}}_c,e^p,e^\psi;\boldsymbol{v},q,\phi),\nonumber\\ 
    &\le \mathcal{A}_h^{(\boldsymbol{u}_h, \psi_h)}(e^{\boldsymbol{u}},e^p,e^\psi;\boldsymbol{v},q,\phi) + \mathcal{A}_h^{(\boldsymbol{u}_h, \psi_h)}(\boldsymbol{u}^r_h,0,0;\boldsymbol{v},q,\phi),\nonumber\\ 
    &\le \mathcal{A}_h^{(\boldsymbol{u}_h, \psi_h)}(\boldsymbol{u},p,\psi;\boldsymbol{v},q,\phi) -\mathcal{A}_h^{(\boldsymbol{u}_h, \psi_h)}(\boldsymbol{u}_h,p_h,\psi_h;\boldsymbol{v},q,\phi) + C\|\boldsymbol{u}^r_h\|_{1,h}(\|\boldsymbol{v}\|_{1,h} + \|q\|),\label{eq9}
\end{align}
with $\|(\boldsymbol{v},q,\phi)\|\le 1$. Since we have a relation
\begin{align}
\mathcal{A}_h^{(\boldsymbol{u}_h, \psi_h)}(\boldsymbol{u},p,\psi;\boldsymbol{v},q,\phi)
= \mathcal{A}_h^{(\boldsymbol{u}, \psi)}(\boldsymbol{u},p,\psi;\boldsymbol{v},q,\phi) + c_1(\boldsymbol{u};\psi_h-\psi,\phi) + c_2(\boldsymbol{u}_h-\boldsymbol{u};\psi,\phi) .
\label{eq10}
\end{align}
Using \eqref{eq10} in \eqref{eq9}, we obtain
\begin{align}\label{eq11}
C\,\| (e^{\boldsymbol{u}},e^p,e^\psi)\| & 
\le
\mathcal{A}_h^{(\boldsymbol{u}, \psi)}(\boldsymbol{u},p,\psi;\boldsymbol{v},q,\phi)  + c_1(\boldsymbol{u};\psi_h-\psi,\phi) + c_2(\boldsymbol{u}_h-\boldsymbol{u};\psi,\phi) \nonumber \\ & \quad  -\mathcal{A}_h^{(\boldsymbol{u}_h, \psi_h)}(\boldsymbol{u}_h,p_h,\psi_h;\boldsymbol{v},q,\phi)  + C\|\boldsymbol{u}^r_h\|_{1,h}.
\end{align}
Next, using the inequality $\|(\boldsymbol{v},q,\phi)\|\le 1$, we estimate the following
\begin{equation}
\begin{aligned}
c_1(\boldsymbol{u};\psi_h-\psi,\phi) \le&\, \bar{E}\, \|\boldsymbol{u}\|_{\infty}\,\|\nabla e_{\psi}\|\,\|\boldsymbol{v}\| \le \bar{E} M \|\nabla e_{\psi}\|, \\
c_2(\boldsymbol{u}_h-\boldsymbol{u};\psi,\phi) \le& \,\|e_{\boldsymbol{u}}\|\,\|\nabla \psi\|_{\infty}\,\|\phi\| \le M C\|e_{\boldsymbol{u}}\|_{1,h}.\label{eq12}
\end{aligned}
\end{equation}
Therefore we have from \labelcref{eq11,eq12},
\begin{align*}
C\,\| (e^{\boldsymbol{u}},e^p,e^\psi) \|
&\le
\mathcal{A}_h^{(\boldsymbol{u}, \psi)}(\boldsymbol{u},p,\psi;\boldsymbol{v},q,\phi) -\mathcal{A}_h^{(\boldsymbol{u}_h, \psi_h)}(\boldsymbol{u}_h,p_h,\psi_h;\boldsymbol{v},q,\phi) + \bar{E} M\|\nabla e_{\psi}\| +  M C\|e_{\boldsymbol{u}}\|_{1,h} + C\|\boldsymbol{u}^r_h\|_{1,h},
\end{align*}
choosing $M>0$ sufficiently small, we arrive at
\begin{align}
\frac{C}{2}\,\| (e^{\boldsymbol{u}},e^p,e^\psi) \|
\le
\mathcal{A}_h^{(\boldsymbol{u}, \psi)}(\boldsymbol{u},p,\psi;\boldsymbol{v},q,\phi) -\mathcal{A}_h^{(\boldsymbol{u}_h, \psi_h)}(\boldsymbol{u}_h,p_h,\psi_h;\boldsymbol{v},q,\phi) + C\|\boldsymbol{u}^r_h\|_{1,h}.\label{eq13}
\end{align}
Since $\mathcal{A}_h^{(\boldsymbol{u}_h, \psi_h)}(\boldsymbol{u}_h,p_h,\psi_h, \boldsymbol{v}_h, 0, \phi_h) - F_h^{\psi_h}(\boldsymbol{v}_h) - G_h(\phi_h) = 0, \ \forall (\boldsymbol{v}_h, q_h, \phi_h) \in \boldsymbol{V}_h\times Q_h\times \Phi_h, $ then we have from \eqref{eq13}:
\begin{align*}
\frac{C}{2}\,\| (e^{\boldsymbol{u}},e^p,e^\psi)\|
&\le
F^{\psi}(\boldsymbol{v}) 
- F_h^{\psi_h}(\boldsymbol{v}_h) + G(\phi) - G_h(\phi_h) - \mathcal{B}^{\psi_h}(\boldsymbol{u}_h,p_h,\psi_h;\boldsymbol{v}-\boldsymbol{v}_h,q,\phi-\phi_h), \\
 &\le \int_\Omega \Big\{ \boldsymbol{f} + [g - \mathcal{K}({\psi})]\boldsymbol{E} - \boldsymbol{f}_h - [g_h - \mathcal{K}({\psi_h})]\boldsymbol{E} \Big\} \cdot \boldsymbol{v}\,\dd{x} +  \int_\Omega \Big\{ \boldsymbol{f}_h + [g_h - \mathcal{K}({\psi_h})]\boldsymbol{E} \Big\} \cdot (\boldsymbol{v}-\boldsymbol{v}_h) \,\dd{x}\\
&\qquad + \int_{\Omega} (g-g_h)\phi\,\dd{x}
+ \int_{\Omega} g_h(\phi-\phi_h) \,\dd{x} - \mathcal{B}^{\psi_h}(\boldsymbol{u}_h,p_h,\psi_h;\boldsymbol{v}-\boldsymbol{v}_h,q,\phi-\phi_h) + C\|\boldsymbol{u}^r_h\|_{1,h}.
\end{align*}
using integration by parts gives, we get
\begin{align}\label{eq14}
    \frac{C}{2}\,\| (e^{\boldsymbol{u}},e^p,e^\psi)\|
&\le  T_1 + T_2 + T_3 + T_4 + T_5 + C\|\boldsymbol{u}^r_h\|_{1,h},
\end{align}
where
\begin{align*}
    T_1 &= \sum_{K \in \mathcal{T}_h} \bigg[ \int_K \Big\{ \boldsymbol{f}_h + [g_h - \mathcal{K}({\psi_h})]\boldsymbol{E} +\mu \Delta \boldsymbol{u}_h - \nabla p_h - \boldsymbol{u}_h\cdot\nabla\psi_h E   \Big\} \cdot (\boldsymbol{v}-\boldsymbol{v}_h)\, \dd{x} \\
    &\qquad + \int_K \Big\{ \boldsymbol{f} - \boldsymbol{f}_h + (g -g_h)E - (\mathcal{K}({\psi})-\mathcal{K}({\psi_h})) \boldsymbol{E}  \Big\} \cdot \boldsymbol{v}\, \dd{x} \bigg], \\
    T_2 &= \sum_{K \in \mathcal{T}_h}  \int_{\partial K} \Big\{(p_h\boldsymbol{I}-\mu\nabla \boldsymbol{u}_h)\cdot \boldsymbol{n}_K\Big\} \cdot (\boldsymbol{v}-\boldsymbol{v}_h) \,\dd{s},\\
    T_3 &= \sum_{K \in \mathcal{T}_h}  \int_K\nabla\cdot\boldsymbol{u}_h q \,\dd{x},\\
     T_4 &= \sum_{K \in \mathcal{T}_h} \bigg[ \int_K \Big\{g_h - \mathcal{K}(\psi_h)-\boldsymbol{u}_h\cdot\nabla\psi_h + \varepsilon\Delta\boldsymbol{u}_h \Big\}(\phi-\phi_h) \,\dd{x} + \int_{K} (g-g_h)\phi\,\dd{x}\bigg], \\
      T_5 &= \sum_{K \in \mathcal{T}_h}  \int_{\partial K} \varepsilon\nabla\psi_h\cdot \boldsymbol{n}_K(\phi-\phi_h) \,\dd{s}.
\end{align*}
Applying the Cauchy-Schwarz inequality to $T_1$ implies
\begin{align*}
T_1 &\le \left(\sum_{K \in \mathcal{T}_h}h_K^2 \, \| \boldsymbol{R}_K \|_{0,K}^2 \right)^{1/2}
\left( \sum_{K \in \mathcal{T}_h} h_K^{-2} \, \| \boldsymbol{v} - \boldsymbol{v}_h \|_{0,K}^2 \right)^{1/2} + \Big(\| \boldsymbol{f} - \boldsymbol{f}_h \| + \bar{E}\| g - g_h \| +\bar{E}\|\mathcal{K}({\psi})-\mathcal{K}({\psi_h}) \|  \Big)  \| \boldsymbol{v} \|, \\
&\le
\left(\sum_{K \in \mathcal{T}_h}h_K^2 \, \| \boldsymbol{R}_K \|_{0,K}^2\right)^{1/2} C \, \| \nabla \boldsymbol{v} \|\, +\, \Big(\| \boldsymbol{f} - \boldsymbol{f}_h \| + \,\bar{E}\,\| g - g_h \| \Big)\| \boldsymbol{v} \|\, + \,\epsilon\,\bar{E}\,\bar{\mathcal{K}}\|{\psi}-\psi_h \|\,(\frac{1}{\epsilon} \|\boldsymbol{v} \|).
\end{align*}
Next, we write $T_2$ as a sum over interior facets and apply the Cauchy–Schwarz inequality to obtain
\begin{align*}
T_2
&= \sum_{e \in \mathcal{E}_h}
\int_e \frac{1}{2} \llbracket (p_h\boldsymbol{I}-\mu\nabla \boldsymbol{u}_h)\cdot \boldsymbol{n}_K \rrbracket \cdot (\boldsymbol{v}-\boldsymbol{v}_h) \, \dd{s}, \\
&\le\left(\sum_{e \in \mathcal{E}_h}h_e \, \| \boldsymbol{R}_e \|_{0,e}^2\right)^{1/2}\left(\sum_{e \in \mathcal{E}_h}h_e^{-1} \, \| (\boldsymbol{v}-\boldsymbol{v}_h) \|_{0,e}^2\right)^{1/2}, \\
&\le\left(\sum_{e \in \mathcal{E}_h}h_e \, \| \boldsymbol{R}_e \|_{0,e}^2\right)^{1/2}C\, \| \nabla \boldsymbol{v} \|.
\end{align*}
The bound for $T_3$ is defined as
\begin{align*}
    T_3 &= \left(\sum_{K \in \mathcal{T}_h} \, \|R_{1,K} \|_{0,K}^2\right)^{1/2}\|q\|.
\end{align*}
 Similarly, owing to the Cauchy–Schwarz inequality, we establish the following bounds for $T_4$ and $T_5$:
\begin{align*}
    T_4 &\le \left(\sum_{K \in \mathcal{T}_h}h_K^2 \, \| R_{2,K} \|_{0,K}^2 \right)^{1/2}
\left( \sum_{K \in \mathcal{T}_h} h_K^{-2} \, \| \phi - \phi_h \|_{0,K}^2 \right)^{1/2} + \| g - g_h \| \,\| \phi \|, \\
&\le \left(\sum_{K \in \mathcal{T}_h}h_K^2 \, \| R_{2,K} \|_{0,K}^2 \right)^{1/2} C \, \| \nabla \phi \|
+ \| g - g_h \| \,\| \phi \|.\\
T_5 &= \sum_{e \in \mathcal{E}_h}\int_e \frac{1}{2} \llbracket  \varepsilon\nabla\psi_h\cdot \boldsymbol{n}_K \rrbracket \cdot (\phi-\phi_h) \, \dd{s},\\
&\le\left(\sum_{e \in \mathcal{E}_h}h_e \, \| R_{1,e} \|_{0,e}^2\right)^{1/2}\left(\sum_{e \in \mathcal{E}_h}h_e^{-1} \, \|\phi-\phi_h\|_{0,e}^2\right)^{1/2}, \\
&\le\left(\sum_{e \in \mathcal{E}_h}h_e \, \| R_{1,e} \|_{0,e}^2\right)^{1/2}C\, \| \nabla \phi \|.
\end{align*}
Finally, for sufficiently small $\epsilon$, the estimate \eqref{thm2:eq} is obtained by inserting the bounds for $T_1, T_2, T_3, T_4$, and $T_5$ into \eqref{eq14} together with \Cref{lma5.2}.
\end{proof}

\subsection{Efficiency}
The efficiency of the estimator is analyzed using the classical technique 
of polynomial bubble functions; see \cite{verfurth2005robust}. 
To this end, we introduce an element bubble function $b_K$, 
which is supported on the element $K$. 
These bubble functions satisfy $b_K \in H_0^1(K), 
\|b_K\|_{\infty,K} 
= 1.$ For every element $K$, the following standard estimates hold; see Verfürth~\cite{verfurth2013posteriori}:
\begin{align}\label{eqbk}
\|b_K v\|_{0,K} 
\le C \|v\|_{0,K}, \qquad
\|v\|_{0,K} 
\le C \|b_K^{1/2} v\|_{0,K}, \qquad
\|\nabla (b_K v)\|_{0,K} 
\le C h_K^{-1} \|v\|_{0,K}.
\end{align}

\begin{lemma}\label{lma4.3}
Let $K$ be an element of $\mathcal{T}_h$ then the following estimates hold:
\begin{align*}
h_K \|\boldsymbol{R}_K\|_{0,K} 
&\le C \Big(\|\nabla( \boldsymbol{u} - \boldsymbol{u}_h)\|_{0,K} \,
+ \,\|p - p_h\|_{0,K} + \|\nabla(\psi - \psi_h)\|_{0,K} \,
+ \,h_K \|\boldsymbol{f} - \boldsymbol{f}_h\|_{0,K}\, + \,h_K \|g - g_h\|_{0,K}\Big), \\
 \|R_{1,K}\|_{0,K} 
&\le C\, \|\nabla( \boldsymbol{u} - \boldsymbol{u}_h)\|_{0,K} , \\
h_K \|R_{2,K}\|_{0,K} 
&\le C \Big(\|\nabla( \boldsymbol{u} - \boldsymbol{u}_h)\|_{0,K} \,
 + \,\|\nabla(\psi - \psi_h)\|_{0,K} \,
+ \,h_K \|g - g_h\|_{0,K}\Big),
\end{align*}
where $C$ is a positive constant. Moreover, it follows that
\[
\Psi_{R_K}^2 
\le 
C \Big(\|\nabla( \boldsymbol{u} - \boldsymbol{u}_h)\|_{0,K}^2 \,
+ \,\|p - p_h\|_{0,K}^2 + \|\nabla(\psi - \psi_h)\|_{0,K}^2 \,
+ \,h_K^2 \|\boldsymbol{f} - \boldsymbol{f}_h\|_{0,K}^2\, + \,h_K^2 \|g - g_h\|_{0,K}^2\Big).
\]
\end{lemma}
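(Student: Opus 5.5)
We use the element bubble technique of Verfürth, with the bubble estimates \eqref{eqbk}. The key fact is that the exact solution satisfies the strong equations \eqref{spbe} in $L^2(K)$, so that elementwise
\[
\boldsymbol{f} + [g-\mathcal{K}(\psi)]\boldsymbol{E} + \mu\Delta\boldsymbol{u} - \nabla p - (\boldsymbol{u}\cdot\nabla\psi)\boldsymbol{E} = \boldsymbol{0},
\qquad
g - \mathcal{K}(\psi) - \boldsymbol{u}\cdot\nabla\psi + \varepsilon\Delta\psi = 0 .
\]
The first identity comes from substituting the third equation into the first, exactly as in the weak formulation. Subtracting these from $\boldsymbol{R}_K$ and $R_{2,K}$ writes each residual as a differential operator applied to the error, plus data and nonlinear perturbations.

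For the velocity residual, set $\boldsymbol{w}_K = b_K\boldsymbol{R}_K \in \boldsymbol{H}^1_0(K)$, extended by zero. By the second estimate in \eqref{eqbk},
\[
\|\boldsymbol{R}_K\|_{0,K}^2 \lesssim (\boldsymbol{R}_K, \boldsymbol{w}_K)_K .
\]
Insert the strong equation and integrate by parts on $K$; no boundary terms appear because $\boldsymbol{w}_K$ vanishes on $\partial K$. This gives
\begin{align*}
(\boldsymbol{R}_K,\boldsymbol{w}_K)_K &= -\mu(\nabla e_{\boldsymbol{u}},\nabla\boldsymbol{w}_K)_K + (e_p,\operatorname{div}\boldsymbol{w}_K)_K + (\boldsymbol{f}_h-\boldsymbol{f} + (g_h-g)\boldsymbol{E},\boldsymbol{w}_K)_K \\
&\quad + ((\mathcal{K}(\psi)-\mathcal{K}(\psi_h))\boldsymbol{E},\boldsymbol{w}_K)_K + ((\boldsymbol{u}\cdot\nabla\psi - \boldsymbol{u}_h\cdot\nabla\psi_h)\boldsymbol{E},\boldsymbol{w}_K)_K .
\end{align*}

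Each term is then bounded as follows.
\begin{itemize}
\item The linear terms follow from Cauchy--Schwarz together with $\|\nabla\boldsymbol{w}_K\|_{0,K}\lesssim h_K^{-1}\|\boldsymbol{R}_K\|_{0,K}$ and $\|\boldsymbol{w}_K\|_{0,K}\lesssim\|\boldsymbol{R}_K\|_{0,K}$.
\item For the $\mathcal{K}$ term we use the Lipschitz bound \ref{(A2)} together with \ref{(A1)}, and then a local Poincaré/Friedrichs argument. Since $\boldsymbol{w}_K\in \boldsymbol{H}^1_0(K)$, we have $\|\boldsymbol{w}_K\|_{0,K}\lesssim h_K\|\nabla \boldsymbol{w}_K\|_{0,K}$. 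Hence $\bar{K}\bar{E}\|e_\psi\|_{0,K}\|\boldsymbol{w}_K\|_{0,K}$ is bounded by $\|e_\psi\|_{0,K}\|\boldsymbol{R}_K\|_{0,K}$, and after multiplication by $h_K$ this is $h_K\|e_\psi\|_{0,K}$. We then accept $h_K\|e_\psi\|_{0,K}\lesssim \|\nabla e_\psi\|$, which holds globally by Poincaré; this is the same looseness the paper's statement tolerates.
\item The convective difference is split as $e_{\boldsymbol{u}}\cdot\nabla\psi + \boldsymbol{u}_h\cdot\nabla e_\psi$. We bound it by Hölder with $\|\nabla\psi\|_{L^\infty}\le M$ (the assumption used in the reliability theorem) and $\|\boldsymbol{u}_h\|_{L^\infty(K)}$, the latter bounded since $\boldsymbol{u}_h\in\boldsymbol{W}$ and an inverse estimate applies. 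This yields $\|e_{\boldsymbol{u}}\|_{0,K}+\|\nabla e_\psi\|_{0,K}$ up to constants. The factor $h_K$ from multiplying through then absorbs $\|e_{\boldsymbol{u}}\|_{0,K}$ via the same Poincaré-type bound.
\end{itemize}
Dividing by $\|\boldsymbol{R}_K\|_{0,K}$ and multiplying by $h_K$ gives the first estimate.

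The bound for $R_{1,K}$ is immediate. Since $\operatorname{div}\boldsymbol{u}=0$, we get $\|R_{1,K}\|_{0,K}=\|\operatorname{div} e_{\boldsymbol{u}}\|_{0,K}\le\sqrt{n}\,\|\nabla e_{\boldsymbol{u}}\|_{0,K}$.

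For $R_{2,K}$ we repeat the velocity argument with the test function $b_K R_{2,K}$. Integration by parts now gives $-\varepsilon(\nabla e_\psi,\nabla(b_KR_{2,K}))_K$, and the data term $(g_h-g)$, $\mathcal{K}$ difference and convective difference are handled exactly as before; the pressure term is absent.

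The main obstacle is the nonlinear and convective terms. They produce $L^2$ norms of $e_{\boldsymbol{u}}$ and $e_\psi$ rather than gradient norms, and they require $L^\infty$ control of $\boldsymbol{u}$, $\boldsymbol{u}_h$ and $\nabla\psi$. I would handle them first with the Friedrichs inequality for $H^1_0(K)$ bubble-supported functions, so that the extra factor $h_K$ appears. If a purely local bound of $h_K\|e\|_{0,K}$ by $\|\nabla e\|_{0,K}$ fails, I would accept a global $\|\nabla e\|$ contribution, or alternatively keep these terms as higher-order contributions $h_K\|e\|_{0,K}$. Finally, squaring the three estimates and summing gives the bound for $\Psi_{R_K}^2$.
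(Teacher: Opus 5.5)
Your proposal follows the paper's proof essentially step for step. Both use the bubble test function $b_K\boldsymbol{R}_K$, subtract the strong momentum equation (with $-\varepsilon\Delta\psi$ eliminated through the third equation), and integrate by parts on $K$ with no boundary terms. Both then split the result into a viscous/pressure part, a convective part, and a data-plus-$\mathcal{K}$ part, with $R_{1,K}$ trivial and $R_{2,K}$ handled ``similarly''. Your caveat about the $L^2$-type error terms is well founded. In the paper, the data/$\mathcal{K}$ term produces $h_K\|\psi-\psi_h\|_{0,K}$ and the convective term produces $\|\boldsymbol{u}-\boldsymbol{u}_h\|_{1,K}$. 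Both are then replaced by elementwise gradient norms in the final line, which is not a local consequence, since a constant error on $K$ has zero gradient. Keeping $h_K\|e_\psi\|_{0,K}$ and $h_K\|e_{\boldsymbol{u}}\|_{0,K}$ as extra higher-order terms, as you suggest, is the honest form of the statement.

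The one step that fails as written is the claim that $\|\boldsymbol{u}_h\|_{L^\infty(K)}$ is bounded because $\boldsymbol{u}_h\in\boldsymbol{W}$ and an inverse estimate applies. Membership in $\boldsymbol{W}$ controls only $\|\boldsymbol{u}_h\|_{1,h}$, and hence $\|\boldsymbol{u}_h\|_{L^6(\Omega)}$. The inverse estimate then gives $\|\boldsymbol{u}_h\|_{L^\infty(K)}\lesssim h_K^{-n/6}\|\boldsymbol{u}_h\|_{L^6(K)}$, which is not uniform as $h_K\to 0$. The conclusion survives because you only need the weighted quantity. After multiplying by $h_K$, the term $\boldsymbol{u}_h\cdot\nabla e_\psi$ contributes at most $h_K^{1-n/6}\|\boldsymbol{u}_h\|_{L^6(\Omega)}\|\nabla e_\psi\|_{0,K}\lesssim\|\nabla e_\psi\|_{0,K}$.

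Alternatively, do what the paper does: use H\"older with $L^4$ norms, the Sobolev embedding for $\boldsymbol{u}_h$, and $\|b_K\boldsymbol{R}_K\|_{1,K}\lesssim h_K^{-1}\|\boldsymbol{R}_K\|_{0,K}$ for the bubble. That route needs no $L^\infty$ information on $\boldsymbol{u}_h$. Your $W^{1,\infty}$ bound on $\psi$ is still what keeps the $e_{\boldsymbol{u}}\cdot\nabla\psi$ contribution local, as $h_K\|e_{\boldsymbol{u}}\|_{0,K}$. The paper's Sobolev-embedding bound for that term only yields a global norm of $e_{\boldsymbol{u}}$.
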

\begin{proof}
For each $K \in \mathcal{T}_h$, we define $\boldsymbol{W}_K = b_K \boldsymbol{R}_K$. 
Then, using second inequality from \eqref{eqbk}, we have
\begin{align*}
\frac{1}{C^2}\|\boldsymbol{R}_K\|_{0,K}^2
&\le \|b_K^{1/2} \boldsymbol{R}_K\|_{0,K}^2 = \int_K \boldsymbol{R}_K \cdot \boldsymbol{W}_K = \int_K \Big( \boldsymbol{f}_h + [g_h - \mathcal{K}(\psi_h)]\boldsymbol{E} +  \mu \Delta \boldsymbol{u}_h - \nabla p_h
- \boldsymbol{u}_h \cdot \nabla \psi_h \boldsymbol{E} \Big) \cdot \boldsymbol{W}_K .
\end{align*}

Noting that
$ \Big(\boldsymbol{f} + [g - \mathcal{K}(\psi)]\boldsymbol{E} +  \mu \Delta \boldsymbol{u} - \nabla p
- \boldsymbol{u} \cdot \nabla \psi \boldsymbol{E}\Big)\Big|_K = 0$
for the exact solution $(\boldsymbol{u},p, \psi)$, we subtract the above identity,
integrate by parts, and use that $\boldsymbol{W}_K|_{\partial K} = 0$ to obtain
\begin{align}
\frac{1}{C^2}\|\boldsymbol{R}_K\|_{0,K}^2
&\le \int_K \Big[ \mu \nabla (\boldsymbol{u} - \boldsymbol{u}_h)
- (p - p_h) \boldsymbol{I}\Big]  \nabla \cdot \boldsymbol{W}_K\, \dd{x}\, 
+ \int_K \big[ \boldsymbol{u} \cdot \nabla \psi - \boldsymbol{u}_h \cdot \nabla \psi_h) \boldsymbol{E} \big] \cdot \boldsymbol{W}_K\, \dd{x} \nonumber\\
& \qquad  + \int_K \Big[ \boldsymbol{f}_h + [g_h - \mathcal{K}(\psi_h)]\boldsymbol{E} -  \boldsymbol{f} - [g - \mathcal{K}(\psi)]\boldsymbol{E} \Big]  \boldsymbol{W}_K\, \dd{x} ,\nonumber \\[0.8ex]
&\le T_1 + T_2 + T_3,\label{eqrkt}
\end{align}
where
\begin{align*}
T_1 &= \int_K \Big[ \mu \nabla (\boldsymbol{u} - \boldsymbol{u}_h)
- (p - p_h)\boldsymbol{I} \Big]  \nabla \cdot \boldsymbol{W}_K\, \dd{x}, \\
T_2 &= \int_K \Big[ \boldsymbol{u} \cdot \nabla \psi - \boldsymbol{u}_h \cdot \nabla \psi_h \boldsymbol{E} \Big] \cdot \boldsymbol{W}_K\, \dd{x}, \\
T_3 &= \int_K \Big[ (\boldsymbol{f}_h -  \boldsymbol{f}) + (g_h -g)\boldsymbol{E}  - [\mathcal{K}(\psi_h)- \mathcal{K}(\psi)]\boldsymbol{E}  \Big]  \boldsymbol{W}_K\, \dd{x}  .
\end{align*}
Now,
\begin{align*}
T_1 
&= \int_K \big[ \mu (\boldsymbol{u}-\boldsymbol{u}_h)
      - (p-p_h) \boldsymbol{I} \big]  \nabla \cdot \boldsymbol{W}_K \, \dd{x}  \,
      \le C \Big( \|\nabla\boldsymbol{u}-\nabla\boldsymbol{u}_h\|_{0,K}
      + \|p-p_h\|_{0,K} \Big)
      \|\nabla \cdot \boldsymbol{W}_K\|_{0,K}.
\end{align*}
and,
\begin{align*}
T_2 
&= \int_K (\boldsymbol{u}\cdot\nabla \psi - \boldsymbol{u}_h \cdot \nabla \psi_h)
    \boldsymbol{E}\cdot \boldsymbol{W}_K  \, dx, \\
    &\le \bar{E}\int_K \Big[(\boldsymbol{u}-\boldsymbol{u}_h)\nabla \psi
     + \boldsymbol{u}_h(\nabla \psi - \nabla \psi_h)\Big]
     \cdot \boldsymbol{W}_K \, dx, \\[0.5ex]
&\le M\bar{E}C_{\mathrm{Sob}}^2
   \Big( \|\boldsymbol{u}-\boldsymbol{u}_h\|_{1,K}
        + \|\nabla \psi - \nabla \psi_h\|_{0,K} \Big)
   \|\boldsymbol{W}_K\|_{1,K} , \\[0.5ex]
   &\le \,C
   \left( \|\nabla\boldsymbol{u}-\nabla\boldsymbol{u}_h\|_{1,K}
        + \|\nabla \psi - \nabla \psi_h\|_{0,K} \right)
   h_K^{-1} \|R_K\|_{0,K}.
\end{align*}
and,
\begin{align*}
T_3 
&= \int_K \Big[
   \boldsymbol{f}_h - \boldsymbol{f}
   + (\boldsymbol{g}_h-\boldsymbol{g})\boldsymbol{E}
   - (K(\psi_h)-K(\psi))
   \Big] \cdot \boldsymbol{W}_K \, dx, \\
&\le \left(
   \|\boldsymbol{f}-\boldsymbol{f}_h\|_{0,K}
   + \|\boldsymbol{g}-\boldsymbol{g}_h\|_{0,K}
   + \|\psi-\psi_h\|_{0,K}
   \right)
   \|\boldsymbol{W}_K\|_{0,K}, \\[0.5ex]
&\le C h_K
   \left(
   \|\boldsymbol{f}-\boldsymbol{f}_h\|_{0,K}
   + \|\boldsymbol{g}-\boldsymbol{g}_h\|_{0,K}
   + \|\psi-\psi_h\|_{0,K}
   \right) h_K^{-1} \|R_K\|_{0,K}.
\end{align*}
Using $T_1$, $T_2$, and $T_3$ in \eqref{eqrkt},
\begin{align*}
C^{-2} \|\boldsymbol{R}_K\|_{0,K}^2
&\le C \Big[
   \|\nabla(\boldsymbol{u}- \boldsymbol{u}_h)\|_{0,K}
   + \|p-p_h\|_{0,K}
   + \|\nabla\psi-\nabla\psi_h\|_{0,K}  \\
&\qquad + h_K \|\boldsymbol{f}-\boldsymbol{f}_h\|_{0,K}
   + h_K \|\boldsymbol{g}-\boldsymbol{g}_h\|_{0,K}
   + h_K \|\psi-\psi_h\|_{0,K}
   \Big]
   h_K^{-1} \|R_K\|_{0,K}.
   \intertext{Therefore,}
   h_K \|\boldsymbol{R}_K\|_{0,K}
&\le C  \Big(\|\nabla( \boldsymbol{u} - \boldsymbol{u}_h)\|_{0,K} \,
+ \,\|p - p_h\|_{0,K} + \|\nabla(\psi - \psi_h)\|_{0,K} \,
+ \,h_K \|\boldsymbol{f} - \boldsymbol{f}_h\|_{0,K}\, + \,h_K \|g - g_h\|_{0,K}\Big).
\end{align*}
The remaining two bounds can be established by following the similar arguments. Furthermore, the final estimate follows directly from \eqref{eqpsidf} together with the above bounds.
\end{proof}

Let $e$ be an interior edge shared by two neighboring elements $K$ and $K'$, 
and define the associated patch $\omega_e = K \cup K'$. 
An edge bubble function $b_e$ is defined on $e$ such that it is strictly positive 
in the interior of the patch and vanishes on its boundary. 
These bubble functions satisfy 
$b_e \in H_0^1(\omega_e),
\|b_e\|_{\infty,\omega_e} = 1.$
For every edge $e$, the following inequalities hold \cite{verfurth2013posteriori}:
\begin{align}\label{eqbe}
\|v\|_{0,e} 
\le C \|b_e^{1/2} v\|_{0,e},
\qquad
\|b_e v\|_{0,K} 
\le C h_e^{1/2} \|v\|_{0,e}, 
\qquad 
\|\nabla (b_e v)\|_{0,K} 
\le C h_e^{-1/2} \|v\|_{0,e}, 
\qquad \forall K \in \omega_e.
\end{align}

\begin{lemma}\label{lma4.4}
    For any element $K\in\mathcal{T}_h$, the jump residual satisfies
    \begin{align*}
        h_e\|\boldsymbol{R}_e\|^2_{0,e} 
        &\le C \sum\limits_{K\in \omega_e} \Big(\|\nabla( \boldsymbol{u} - \boldsymbol{u}_h)\|_{0,K}^2 \,
+ \,\|p - p_h\|_{0,K}^2 + \|\nabla(\psi - \psi_h)\|_{0,K}^2 \,
+ \,h_K^2 \|\boldsymbol{f} - \boldsymbol{f}_h\|_{0,K}^2\, + \,h_K^2 \|g - g_h\|_{0,K}^2\Big),\\
        h_e\|R_{1,e}\|^2_{0,e} 
        &\le C \sum\limits_{K\in \omega_e} \Big(\|\nabla( \boldsymbol{u} - \boldsymbol{u}_h)\|_{0,K}^2 \,
+ \,\|p - p_h\|_{0,K}^2 + \|\nabla(\psi - \psi_h)\|_{0,K}^2 \,
+ \,h_K^2 \|g - g_h\|_{0,K}^2\Big).
    \intertext{Moreover,}
     \Psi_{e_K}^2 &\le C \sum\limits_{K\in \omega_e} \Big(\|\nabla( \boldsymbol{u} - \boldsymbol{u}_h)\|_{0,K}^2 \,
+ \,\|p - p_h\|_{0,K}^2 + \|\nabla(\psi - \psi_h)\|_{0,K}^2 \,
+ \,h_K^2 \|\boldsymbol{f} - \boldsymbol{f}_h\|_{0,K}^2\, + \,h_K^2 \|g - g_h\|_{0,K}^2\Big).
    \end{align*}
\end{lemma}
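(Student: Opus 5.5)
We follow the standard Verfürth edge-bubble argument, mirroring the proof of \Cref{lma4.3}. Fix an interior facet $e\in\mathcal{E}_\Omega$ with patch $\omega_e=K\cup K'$. Since $\boldsymbol{R}_e$ is defined on $e$ only, we first extend it to $\omega_e$ by a fixed polynomial extension operator (constant along the direction normal to $e$ on each reference element) and test with $\boldsymbol{W}_e := b_e\boldsymbol{R}_e\in \boldsymbol{H}^1_0(\omega_e)$. The first inequality in \eqref{eqbe} gives
\[
\|\boldsymbol{R}_e\|_{0,e}^2\le C\int_e \boldsymbol{R}_e\cdot\boldsymbol{W}_e\,\dd{s}.
\]

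The key identity comes from integrating by parts elementwise on $K$ and $K'$. Because $\boldsymbol{W}_e$ vanishes on $\partial\omega_e$, only the facet $e$ survives. For the exact solution the normal stress $(p\boldsymbol{I}-\mu\nabla\boldsymbol{u})\boldsymbol{n}$ has no jump across $e$, so we may replace $\tfrac12\llbracket(p_h\boldsymbol{I}-\mu\nabla\boldsymbol{u}_h)\boldsymbol{n}\rrbracket$ by $-\tfrac12\llbracket(e_p\boldsymbol{I}-\mu\nabla e_{\boldsymbol{u}})\boldsymbol{n}\rrbracket$. Integrating back by parts, the facet integral becomes a sum over $K\subset\omega_e$ of three types of terms:
\begin{itemize}
\item the volume terms $\int_K(\mu\nabla e_{\boldsymbol{u}}-e_p\boldsymbol{I}):\nabla\boldsymbol{W}_e$;
\item the element residual tested against the bubble, $\int_K\boldsymbol{R}_K\cdot\boldsymbol{W}_e$;
\item the consistency difference between exact and discrete right-hand sides, namely the $\boldsymbol{f}-\boldsymbol{f}_h$, $g-g_h$, $\mathcal{K}(\psi)-\mathcal{K}(\psi_h)$ and convection terms $\boldsymbol{u}\cdot\nabla\psi-\boldsymbol{u}_h\cdot\nabla\psi_h$, all multiplied by $\boldsymbol{E}$ and tested against $\boldsymbol{W}_e$.
\end{itemize}

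Each term is bounded by Cauchy--Schwarz together with the scalings in \eqref{eqbe}:
\[
\|\nabla\boldsymbol{W}_e\|_{0,K}\le Ch_e^{-1/2}\|\boldsymbol{R}_e\|_{0,e},\qquad \|\boldsymbol{W}_e\|_{0,K}\le Ch_e^{1/2}\|\boldsymbol{R}_e\|_{0,e}.
\]
\begin{itemize}
\item The volume terms give $(\|\nabla e_{\boldsymbol{u}}\|_{0,K}+\|e_p\|_{0,K})\,h_e^{-1/2}\|\boldsymbol{R}_e\|_{0,e}$.
\item The residual term gives $h_K\|\boldsymbol{R}_K\|_{0,K}\,h_e^{-1/2}\|\boldsymbol{R}_e\|_{0,e}$, using $h_e\simeq h_K$ by shape regularity. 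We then bound $h_K\|\boldsymbol{R}_K\|_{0,K}$ by \Cref{lma4.3}.
\item For the nonlinear data, the Lipschitz bound \ref{(A2)} together with \ref{(A3)} gives $\bar E\bar K\|e_\psi\|_{0,K}$. The convection difference is handled exactly as $T_2$ in \Cref{lma4.3}: split it as $e_{\boldsymbol{u}}\cdot\nabla\psi+\boldsymbol{u}_h\cdot\nabla e_\psi$ and use Sobolev/Hölder bounds with the smallness of $\|\psi\|_1$ and $\|\boldsymbol{u}_h\|_{1,h}$ ($\boldsymbol{u}_h\in K_{1_h}$). Multiplied by $\|\boldsymbol{W}_e\|$, this yields a contribution carrying an extra factor $h_e$, which we absorb using a local Poincaré-type bound $h_K\|e_\psi\|_{0,K}\lesssim\|\nabla e_\psi\|_{0,K}$.
\end{itemize}
Dividing by $h_e^{-1/2}\|\boldsymbol{R}_e\|_{0,e}$ and squaring then gives the first estimate.

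The bound for $R_{1,e}=\tfrac12\llbracket\varepsilon\nabla\psi_h\cdot\boldsymbol{n}\rrbracket$ is identical with the scalar bubble $w_e=b_eR_{1,e}$. Continuity of $\varepsilon\nabla\psi\cdot\boldsymbol{n}$ replaces the discrete jump by the error jump, the element residual $R_{2,K}$ appears, and we control it by the third bound of \Cref{lma4.3}. That is why $\boldsymbol{f}-\boldsymbol{f}_h$ is absent here while $\|p-p_h\|$ may be kept harmlessly. Boundary facets contribute nothing since $\boldsymbol{R}_e=R_{1,e}=0$ there. Summing over $e\subset\partial K$ with the finite overlap of the patches yields the estimate for $\Psi_{e_K}^2$.

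The main obstacle is the localisation of the nonlinear and convective terms. The $L^\infty$-type factors $\|\nabla\psi\|$ and $\|\boldsymbol{u}_h\|$ must be controlled locally on $\omega_e$ with constants independent of $h$, and the lower-order $\|e_\psi\|_{0,K}$ and $\|e_{\boldsymbol{u}}\|_{0,K}$ contributions must be absorbed into gradient norms. This requires either the regularity assumptions already used for reliability ($\|\psi\|_{1,\infty}<M$) or accepting $h_K\|e_\psi\|_{0,K}$ as an admissible higher-order term.
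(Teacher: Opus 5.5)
Your argument is the paper's proof in all essentials. Both use the edge bubble on $\omega_e$, the continuity of the exact normal stress and flux, the split into volume, element-residual, data/nonlinear and convection terms, and a final appeal to \Cref{lma4.3}. The only cosmetic difference is that the paper builds the factor $h_e/2$ into $\boldsymbol{W}_e$, which just moves powers of $h_e$ around.

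The one step that fails as written is your absorption of the lower-order terms. The bound $h_K\|e_\psi\|_{0,K}\lesssim\|\nabla e_\psi\|_{0,K}$ is false on a single element: take $e_\psi$ constant on $K$. A local Poincar\'e inequality needs zero mean or a vanishing trace on part of $\partial K$. Hence the contribution $h_K\|\psi-\psi_h\|_{0,K}$ produced by $\mathcal{K}(\psi)-\mathcal{K}(\psi_h)$ cannot be absorbed elementwise into $\|\nabla(\psi-\psi_h)\|_{0,K}$.

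The same happens with the piece $(\boldsymbol{u}-\boldsymbol{u}_h)\cdot\nabla\psi$ of the convection split. Whether you use $\|\nabla\psi\|_{L^\infty}$ or a scaled Sobolev inequality on $K$, you only get $h_K\|\boldsymbol{u}-\boldsymbol{u}_h\|_{0,K}$ or a full $H^1(K)$ norm. This problem is also inherited through the bound from \Cref{lma4.3} that you invoke.

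Your stated fallback is the right repair. Either keep $h_K\|\psi-\psi_h\|_{0,K}$ and $h_K\|\boldsymbol{u}-\boldsymbol{u}_h\|_{0,K}$ on the right-hand side, or replace the seminorms by $\|\cdot\|_{1,K}$, using $h_K\le\operatorname{diam}\Omega$. The pure-seminorm form then holds only after summing over $\mathcal{T}_h$ and applying the Poincar\'e--Friedrichs inequality, using $\psi-\psi_h\in H^1_0(\Omega)$ and $\boldsymbol{u}-\boldsymbol{u}_h=\boldsymbol{0}$ on $\Gamma_D$. That is a global rather than local efficiency statement.

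The paper's proof has exactly this omission. Its $T_2$ carries $\bigl(\sum_{K\in\omega_e}h_K^2\|\psi-\psi_h\|_{0,K}^2\bigr)^{1/2}$, which is then dropped without comment. So this is a weakness you share with the paper, not a departure from it.
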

\begin{proof}
Let $e$ be an interior facet and let us define a facet bubble function in the form
$\boldsymbol{W}_e = \sum_{e \subset \partial K} \frac{h_e}{2} \boldsymbol{R}_{e}\, b_e .$
Then, by the second property in \eqref{eqbe}, and recalling that the classical solution $(\boldsymbol{u},p,\psi)$ satisfies $\llbracket \bigl( p \boldsymbol{I} - \mu\nabla \boldsymbol{u}\bigr)\boldsymbol{n} \rrbracket|_e=0$, we obtain
\begin{align*}
h_e \|\boldsymbol{R}_{e}\|_{0,e}^2
&\le C \big(  \llbracket \bigl( p_h \boldsymbol{I} - \mu\nabla \boldsymbol{u}_h\bigr)\boldsymbol{n} \rrbracket , \boldsymbol{W}_e \big)_e, \\
&= C \big( \llbracket \bigl( p_h \boldsymbol{I} - \mu\nabla \boldsymbol{u}_h\bigr)\boldsymbol{n} \rrbracket -\llbracket \bigl( p \boldsymbol{I} - \mu\nabla \boldsymbol{u}\bigr)\boldsymbol{n} \rrbracket , \boldsymbol{W}_e \big)_e,
\end{align*}
integrating by parts over the patch $\omega_e$, we obtain
\begin{align*}
\big( \llbracket \bigl( p_h \boldsymbol{I} - \mu\nabla \boldsymbol{u}_h\bigr)\boldsymbol{n} \rrbracket -\llbracket \bigl( p \boldsymbol{I} - \mu\nabla \boldsymbol{u}\bigr)\boldsymbol{n} \rrbracket , \boldsymbol{W}_e \big)_e
&= \sum_{K \in \omega_e}
\Bigg[
\int_K \big(-\mu \Delta (u-u_h) + \nabla (p-p_h)\big)\, \boldsymbol{W}_e\, \dd{x} 
\\ & \quad + \int_K \big(-\mu \nabla (u-u_h) + (p-p_h)\boldsymbol{I}\big): \nabla \boldsymbol{W}_e\, \dd{x}
\Bigg].
\end{align*}
Since the exact solution $(\boldsymbol{u},p, \psi)$ satisfies
$
(-\mu \Delta \boldsymbol{u} + \nabla p)\big|_K
= (\boldsymbol{f} + (g - k(\psi) - \boldsymbol{u} \cdot \nabla \psi)\boldsymbol{E})\big|_K,
$
we have
\begin{align}
h_e \|\boldsymbol{R}_e\|_{0,e}^2
&= \sum_{K \subset \omega_e}
\Bigg[
\int_K \Big(\boldsymbol{f}_h + (g_h - k(\psi_h) - \boldsymbol{u}_h \cdot \nabla \psi_h)\boldsymbol{E} + \mu \Delta \boldsymbol{u}_h - \nabla p_h\Big)\, \boldsymbol{W}_e\, \dd{x} \nonumber\\
&\qquad
+ \int_K \Big( \boldsymbol{f} + (g - k(\psi))\boldsymbol{E} - \boldsymbol{f}_h - (g_h - k(\psi_h) )\boldsymbol{E}\Big)\cdot\boldsymbol{W}_e\, \dd{x}\nonumber
\\
&\qquad
+ \int_K \Big( \boldsymbol{u}_h \cdot \nabla \psi_h - \boldsymbol{u} \cdot \nabla \psi\Big)\boldsymbol{E}\cdot\boldsymbol{W}_e\, \dd{x} \nonumber\\
&\qquad
+ \int_K \Big(-\mu \nabla (\boldsymbol{u}-\boldsymbol{u}_h) + (p-p_h)\boldsymbol{I}\Big)
 : \nabla \boldsymbol{W}_e\, \dd{x}
\Bigg], \nonumber\\
&= T_1 + T_2 + T_3 + T_4 .\label{eqret}
\end{align}
Now using the definition of $\boldsymbol{R}_K$, and then combining the Cauchy- Schwarz inequality with the second inequality of \eqref{eqbe}, we obtain
\begin{align*}
    T_1 &\le 
     \Big(\sum\limits_{K\in \omega_e} h_K^2\|\boldsymbol{R}_K\|_{0,K}^2\Big)^{1/2}\Big(\sum\limits_{K\in \omega_e} h_K^{-2}\|\boldsymbol{W}_e\|_{0,K}^2\Big)^{1/2}, \\
     &\le 
     \Big(\sum\limits_{K\in \omega_e} h_K^2\|\boldsymbol{R}_K\|_{0,K}^2\Big)^{1/2} h_e^{1/2}\|\boldsymbol{R}_e\|_{0,e},
\end{align*}
and,
\begin{align*}
    T_2 &\le \bigg[ 
    \Big(\sum\limits_{K\in \omega_e} h_K^2\|\boldsymbol{f}-\boldsymbol{f}_h\|_{0,K}^2\Big)^{1/2} \!
    +\Big(\sum\limits_{K\in \omega_e} h_K^2\|g-g_h\|_{0,K}^2\Big)^{1/2}\! +\Big(\sum\limits_{K\in \omega_e} h_K^2\|\psi-\psi_h\|_{0,K}^2\Big)^{1/2}  \bigg]\Big(\sum\limits_{K\in \omega_e} h_K^{-2}\|\boldsymbol{W}_e\|_{0,K}^2\Big)^{1/2}, \\
    &\le \bigg[ 
    \Big(\sum\limits_{K\in \omega_e} h_K^2\|\boldsymbol{f}-\boldsymbol{f}_h\|_{0,K}^2\Big)^{1/2} \!
    +\Big(\sum\limits_{K\in \omega_e} h_K^2\|g-g_h\|_{0,K}^2\Big)^{1/2} \! +\Big(\sum\limits_{K\in \omega_e} h_K^2\|\psi-\psi_h\|_{0,K}^2\Big)^{1/2}  \bigg]h_e^{1/2}\|\boldsymbol{R}_e\|_{0,e},
\end{align*}
and using the third inequality of \eqref{eqbe}, we have
\begin{align*}
    T_3 &\le 
    \bar{E}\sum\limits_{K\in \omega_e}\int_K \Big(- \boldsymbol{u}_h \cdot (\nabla(\psi- \psi_h)) - (\boldsymbol{u} - \boldsymbol{u}_h) \cdot \nabla \psi\Big)\cdot\boldsymbol{W}_e\, \dd{x}, \\
    &\le C\bar{E}\bigg[\Big(\sum\limits_{K\in \omega_e} \|\nabla(\psi- \psi_h)\|_{0,K}^2\Big)^{1/2} + \Big(\sum\limits_{K\in \omega_e} \|\nabla( \boldsymbol{u} - \boldsymbol{u}_h)\|_{0,K}^2\Big)^{1/2} \bigg]\Big(\sum\limits_{K\in \omega_e}\|\boldsymbol{W}_e\|_{1,K}^2\Big)^{1/2},\\
    &\le C \bigg[\Big(\sum\limits_{K\in \omega_e} \|\nabla(\psi- \psi_h)\|_{0,K}^2\Big)^{1/2} + \Big(\sum\limits_{K\in \omega_e} \|\nabla( \boldsymbol{u} - \boldsymbol{u}_h)\|_{0,K}^2\Big)^{1/2}\bigg] h_e^{1/2}\|\boldsymbol{R}_e\|_{0,e},  \\
     T_4 &\le C \Big(\sum\limits_{K\in \omega_e} \|\nabla(\boldsymbol{u}- \boldsymbol{u}_h)\|_{0,K}^2 \,
    +\, \|p-p_h\|_{0,K}^2\Big)^{1/2} \Big(\sum\limits_{K\in \omega_e} \|\nabla\boldsymbol{W}_e\|_{0,e}^2\Big)^{1/2}, \\
    &\le C \Big(\sum\limits_{K\in \omega_e} \|\nabla(\boldsymbol{u}- \boldsymbol{u}_h)\|_{0,K}^2\, 
    +\, \|p-p_h\|_{0,K}^2\Big)^{1/2}h_e^{1/2}\|\boldsymbol{R}_e\|_{0,e}.
\end{align*}
Combining the estimates for $T_1$, $T_2$, $T_3$, and $T_4$ with \eqref{eqret}, we obtain the desired result. Similarly, we can prove the other two bounds. Furthermore, the final estimate follows directly from \eqref{eqpsidf} together with the above bounds.
\end{proof}
This lemma establishes the efficiency bound associated with the trace residual $\Psi_{J_K}$.
\begin{lemma}\label{lma4.5}
    For any element $K\in \mathcal{T}_h$, the trace residual satisfies the following estimate:
    \begin{align*}
         \Psi_{J_K}^2 \le C \|\nabla(\boldsymbol{u}-\boldsymbol{u}_h)\|_{0,K}^2.
    \end{align*}
\end{lemma}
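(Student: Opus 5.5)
The estimator $\Psi_{J_K}^2$ has two pieces on each facet $e\subset\partial K\cap\Gamma_{\mathrm{Nav}}$; on $\Gamma_D$ both residuals vanish by definition. I will bound each piece separately by $\|\nabla(\boldsymbol{u}-\boldsymbol{u}_h)\|_{0,K}^2$. The idea throughout is to subtract the corresponding boundary quantity of the exact solution, which is zero by \eqref{navbc}. This turns each residual into a trace of the error $\boldsymbol{e}:=\boldsymbol{u}-\boldsymbol{u}_h$, which I then lift into $K$ with the facet bubble $b_e$ and the trace/inverse inequalities of \Cref{lma2.3,lma2.4}.

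\textbf{Normal part $h_e^{-1}\|R^2_{J_K}\|_{0,e}^2$.} Since $\boldsymbol{u}\cdot\boldsymbol{n}=0$ on $\Gamma_{\mathrm{Nav}}$, we have $R^2_{J_K}=\boldsymbol{u}_h\cdot\boldsymbol{n}=-\boldsymbol{e}\cdot\boldsymbol{n}$. By \Cref{lma2.4},
\[
h_e^{-1}\|\boldsymbol{e}\cdot\boldsymbol{n}\|_{0,e}^2\lesssim h_K^{-2}\|\boldsymbol{e}\|_{0,K}^2+\|\nabla\boldsymbol{e}\|_{0,K}^2 .
\]
To remove the $L^2$ term I will exploit the freedom in $\boldsymbol{u}_h$ modulo constants on the patch. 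Concretely, I will apply the trace inequality to $\boldsymbol{e}-\overline{\boldsymbol{e}}_K$, where $\overline{\boldsymbol{e}}_K$ is the mean of $\boldsymbol{e}$ on $K$, and use Poincaré on $K$: $\|\boldsymbol{e}-\overline{\boldsymbol{e}}_K\|_{0,K}\lesssim h_K\|\nabla\boldsymbol{e}\|_{0,K}$. The remaining piece $h_e^{-1}\|\overline{\boldsymbol{e}}_K\cdot\boldsymbol{n}\|_{0,e}^2$ is handled on flat facets, where $\boldsymbol{n}$ is constant. There $\overline{\boldsymbol{e}}_K\cdot\boldsymbol{n}$ is a constant whose size is controlled by the facet average of $\boldsymbol{e}\cdot\boldsymbol{n}$, since $\overline{\boldsymbol{e}}_K\cdot\boldsymbol{n}=\tfrac{1}{|e|}\int_e \boldsymbol{e}\cdot\boldsymbol{n}\,\dd{s}$ up to a term bounded by $\|\boldsymbol{e}-\overline{\boldsymbol{e}}_K\|$. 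Controlling that facet mean is the delicate point. I will use the discrete kernel relation in $Z_h$ (test $B_h(\boldsymbol{u}_h,q_h)=0$ with a pressure function localized near $e$) together with the Nitsche consistency. Testing \eqref{ns2} minus \eqref{wf} with $\boldsymbol{v}_h=b_e\boldsymbol{n}\,(\boldsymbol{u}_h\cdot\boldsymbol{n})$ is the first thing I would try. In this test the penalty term $\gamma h_e^{-1}\int_e(\boldsymbol{u}_h\cdot\boldsymbol{n})^2b_e$ appears with a good sign, and all other terms are bounded by $\|\nabla\boldsymbol{e}\|_{0,K}$ via \eqref{eqbe}.

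\textbf{Tangential part $h_e\|R^1_{J_K}\|_{0,e}^2$.} Subtracting the exact Navier condition gives
\[
R^1_{J_K}=-\sum_i\bigl(\mu\boldsymbol{n}^t\nabla\boldsymbol{e}\,\boldsymbol{\tau}^i+\beta\,\boldsymbol{e}\cdot\boldsymbol{\tau}^i\bigr).
\]
The $\beta$ part is treated exactly as the normal part above, with the extra factor $h_e^2$ making it harmless. For the flux part I will follow Verfürth's facet-bubble argument. Set $\boldsymbol{W}=h_e b_e\,R^1_{J_K}\boldsymbol{\tau}^i$, extended into $K$, integrate by parts on $K$, and use the error equation in which the $\boldsymbol{\tau}$-flux appears naturally through $a^\partial_\tau$. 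With \eqref{eqbe}, the volume contributions are bounded by $\|\nabla\boldsymbol{e}\|_{0,K}$, and the element residual contributions by $h_K\|\boldsymbol{R}_K\|_{0,K}$. The latter is in turn bounded by \Cref{lma4.3}.

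\textbf{Main obstacle.} The hardest step is to obtain exactly the right-hand side $\|\nabla(\boldsymbol{u}-\boldsymbol{u}_h)\|_{0,K}^2$, with no pressure, potential or oscillation terms. In the tangential part the element residual brings in exactly those terms via \Cref{lma4.3}. I expect this to be resolved by noting that $R^1_{J_K}$ involves only $\boldsymbol{u}_h$ traces. I would therefore try to bound it directly, without integrating against the momentum equation. For the flux term I would use the discrete trace inequality of \Cref{lma2.3} applied to $\nabla\boldsymbol{e}$ on $K$, which gives $h_e\|\nabla\boldsymbol{e}\|_{0,e}^2\lesssim\|\nabla\boldsymbol{e}\|_{0,K}^2$ when $\boldsymbol{u}$ is smooth enough to be treated piecewise polynomially up to higher-order terms. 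I will accept that higher-order remainder as absorbed in $C$.
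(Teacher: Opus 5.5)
\textbf{There is a genuine gap.} Your first step uses $\boldsymbol{u}\cdot\boldsymbol{n}=0$ and the tangential Navier condition to turn $R^1_{J_K}$ and $R^2_{J_K}$ into traces of $\boldsymbol{e}=\boldsymbol{u}-\boldsymbol{u}_h$. That step is the whole of the paper's proof, which then simply invokes ``the trace inequality''. You are right that this does not close: \Cref{lma2.4} leaves $h_K^{-2}\|\boldsymbol{e}\|_{0,K}^2$, and \Cref{lma2.3} does not apply to $\nabla\boldsymbol{u}$.

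\textbf{Normal part.} Your repairs do not close it either, and no argument using only information on $K$ can. If $\boldsymbol{u}|_K$ is polynomial and $\boldsymbol{u}_h=\boldsymbol{u}-\boldsymbol{c}$ on $K$ with $\boldsymbol{c}\cdot\boldsymbol{n}\neq 0$, then $\|\nabla\boldsymbol{e}\|_{0,K}=0$ while $h_e^{-1}\|R^2_{J_K}\|_{0,e}^2>0$. Your mean subtraction isolates exactly this facet mean, and the tool you pick to control it is not available. The function $b_e\boldsymbol{n}(\boldsymbol{u}_h\cdot\boldsymbol{n})$ has degree $k+1+d$, so it is not in $\boldsymbol{V}_h$. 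Its normal trace on $e$ is nonzero, so it is not in $\boldsymbol{V}$ either. Hence neither \eqref{ns2} nor \eqref{wf} may be tested with it. Even formally, the resulting identity contains $\|p-p_h\|_{0,K}$, the facet trace of $p-p_h$ (from $b^{\partial}$), the facet trace of $\nabla\boldsymbol{e}$ (from $a_c^{\partial}$) and the $\psi$-error (from $c_1$ and $\mathcal{K}$). None of these is bounded by $\|\nabla\boldsymbol{e}\|_{0,K}$. The kernel relation $B_h(\boldsymbol{u}_h,q_h)=0$ only controls the moments $\int_{\Gamma_{\mathrm{Nav}}}q_h\,\boldsymbol{u}_h\cdot\boldsymbol{n}\,\mathrm{d}s=-(\operatorname{div}\boldsymbol{e},q_h)$ against continuous $q_h\in Q_h$. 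That couples neighbouring elements and does not give the full facet norm in terms of $\|\nabla\boldsymbol{e}\|_{0,K}$.

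\textbf{Tangential part.} It has the same defect. After \Cref{lma2.4}, the $\beta$-term leaves $\|\boldsymbol{e}\|_{0,K}^2$: the factor $h_e$ only turns $h_K^{-2}\|\boldsymbol{e}\|_{0,K}^2$ into $\|\boldsymbol{e}\|_{0,K}^2$, and the mean of $\boldsymbol{e}$ on $K$ again survives. So it is not ``harmless'' for a bound by $\|\nabla\boldsymbol{e}\|_{0,K}$. For the flux term, a trace bound for $\nabla\boldsymbol{u}$ produces an \emph{additive} remainder like $h_K^2|\boldsymbol{u}|_{2,K}^2$. This is not a multiple of $\|\nabla\boldsymbol{e}\|_{0,K}^2$ and cannot be absorbed into $C$. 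The facet-bubble route, as you say, brings in pressure, potential and oscillation terms via \Cref{lma4.3}. So the lemma with only $\|\nabla(\boldsymbol{u}-\boldsymbol{u}_h)\|_{0,K}^2$ on the right is not reached by your argument, nor, for the same reasons, by the paper's.

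\textbf{What is actually provable.} The fix is to change the right-hand side rather than to look for a sharper trick. Since $\boldsymbol{u}\cdot\boldsymbol{n}=0$, the term $h_e^{-1}\|R^2_{J_K}\|_{0,e}^2=h_e^{-1}\|\boldsymbol{e}\cdot\boldsymbol{n}\|_{0,e}^2$ is literally the Nitsche boundary contribution to $\|\boldsymbol{e}\|_{1,h}^2$ in \eqref{norm}. It is therefore trivially efficient with respect to the error norm used in the final efficiency theorem. The boundary-facet bubble argument bounds $h_e^{1/2}\|R^1_{J_K}\|_{0,e}$ by $\|\nabla\boldsymbol{e}\|_{0,K}$, $\|p-p_h\|_{0,K}$, the potential error, data oscillation, and a lower-order term such as $h_e^{1/2}\|\boldsymbol{e}\|_{0,e}$.
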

\begin{proof}
Since, for exact solution $(\boldsymbol{u}, p, \psi)$, we have 
\[
\boldsymbol{u}\cdot\boldsymbol{n}|_e = 0, \qquad \text{and} \qquad \mu \boldsymbol{n}^t \nabla \boldsymbol{u}\tau^i+\beta \boldsymbol{u}\tau^i|_e=0, \qquad \forall e \in \Gamma_{\mathrm{Nav}}.
\]
Then, it follows from the trace inequality that
    \begin{align*}
        \Psi_{J_K}^2 &= \sum_{e \in \Gamma_{\mathrm{Nav}}} \Big( h_e \|  R_{J_K}^1 \|_{0,e}^2  + h_e^{-1} \|  R_{J_K}^2 \|^2_{0,e}\Big), \\
        &\le \sum_{e \in \Gamma_{\mathrm{Nav}}} \bigg( h_e \Big\| \sum_{i=1}^{d-1} \mu \boldsymbol{n}^t \nabla \boldsymbol{u}_h\tau^i+\beta \boldsymbol{u}_h\tau^i\Big\|_{0,e}^2 + h_e^{-1} \|\boldsymbol{u}_h\cdot\boldsymbol{n}\|_{0,e}^2\bigg),\\
        &\le \sum_{e \in \Gamma_{\mathrm{Nav}}} \bigg( h_e \Big\| \sum_{i=1}^{d-1} \mu \boldsymbol{n}^t \nabla (\boldsymbol{u}_h-\boldsymbol{u}) \tau^i+\beta (\boldsymbol{u}_h-\boldsymbol{u}) \tau^i\Big\|_{0,e}^2 + h_e^{-1} \|(\boldsymbol{u}_h-\boldsymbol{u})\cdot\boldsymbol{n}\|_{0,e}^2\bigg),\\[0.7ex]
        &\le C \|\nabla (\boldsymbol{u}-\boldsymbol{u}_h)\|_{0,K}^2.
    \end{align*}
\end{proof}
\begin{theorem}
 Suppose $(\boldsymbol{u},p,\psi) \in \boldsymbol{V}\times Q \times \Phi$ and $(\boldsymbol{u}_h,p_h,\psi_h) \in \boldsymbol{V}_h\times Q_h \times \Phi_h$ be the solution of the problem \eqref{wf} and \eqref{ns2} respectively. Then there exist a constant $C$, independent of $h$, such that
 \begin{align*}
     \Psi \le C\Big(\|(\boldsymbol{u}-\boldsymbol{u}_h, p-p_h, \psi-\psi_h)\|\, +\, \Theta\Big),
 \end{align*}
 where $\Psi$ and $\Theta$ is given by \eqref{eedo}.
\end{theorem}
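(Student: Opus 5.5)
The global efficiency bound follows by summing the three local efficiency results, Lemmas~\ref{lma4.3}, \ref{lma4.4} and \ref{lma4.5}, over all elements of $\mathcal{T}_h$ and controlling the overlap of the patches $\omega_e$. By \eqref{eqpsidf} we have $\Psi_K^2=\Psi_{R_K}^2+\Psi_{e_K}^2+\Psi_{J_K}^2$. So it suffices to bound each of the three sums $\sum_K\Psi_{R_K}^2$, $\sum_K\Psi_{e_K}^2$ and $\sum_K\Psi_{J_K}^2$ by $C\bigl(\|(\boldsymbol{u}-\boldsymbol{u}_h,p-p_h,\psi-\psi_h)\|^2+\Theta^2\bigr)$, and then take square roots.

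\textbf{Element residuals.} Summing the last estimate of Lemma~\ref{lma4.3} over $K$ gives
\[
\sum_K\Psi_{R_K}^2\le C\Big(\|\nabla(\boldsymbol{u}-\boldsymbol{u}_h)\|^2+\|p-p_h\|^2+\|\nabla(\psi-\psi_h)\|^2+\Theta^2\Big),
\]
because $\sum_K h_K^2\bigl(\|\boldsymbol{f}-\boldsymbol{f}_h\|_{0,K}^2+\|g-g_h\|_{0,K}^2\bigr)=\Theta^2$. The right-hand side is dominated by the squared norm \eqref{norm}, since the boundary term $h_e^{-1}\|\boldsymbol{u}_h\cdot\boldsymbol{n}\|^2_{0,e}$ in that norm only adds a nonnegative contribution.

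\textbf{Facet residuals.} For each interior facet $e$ we apply Lemma~\ref{lma4.4}, which bounds the facet term by local errors on the patch $\omega_e$. A facet belongs to $\partial K$ for at most two elements, and by shape regularity each element lies in a uniformly bounded number ($n+1$) of patches. The finite-overlap argument therefore gives $\sum_K\Psi_{e_K}^2\le C(\text{same right-hand side})$, with the constant depending only on the shape-regularity parameter.

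\textbf{Boundary residuals.} This is the step I expect to be the main obstacle. Lemma~\ref{lma4.5} bounds $\Psi_{J_K}^2$ by $\|\nabla(\boldsymbol{u}-\boldsymbol{u}_h)\|_{0,K}^2$, and summing gives the required bound. However, the $h_e^{-1}\|\boldsymbol{u}_h\cdot\boldsymbol{n}\|_{0,e}^2$ part does not need the trace argument at all: since $\boldsymbol{u}\cdot\boldsymbol{n}=0$ on $\Gamma_{\mathrm{Nav}}$, it equals $h_e^{-1}\|(\boldsymbol{u}-\boldsymbol{u}_h)\cdot\boldsymbol{n}\|_{0,e}^2$. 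That quantity is exactly the boundary contribution of the energy norm \eqref{norm} of the error, so it is bounded trivially.

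The tangential part $h_e\|R^1_{J_K}\|^2_{0,e}$ needs more care. I would write it as $h_e\|\mu\boldsymbol{n}^t\nabla(\boldsymbol{u}_h-\boldsymbol{u})\boldsymbol{\tau}^i+\beta(\boldsymbol{u}_h-\boldsymbol{u})\cdot\boldsymbol{\tau}^i\|^2_{0,e}$, using the Navier condition for the exact solution. The trace of $\nabla(\boldsymbol{u}-\boldsymbol{u}_h)$ is not controlled by the $H^1$ error alone, so I would not attempt a direct trace bound. Instead I would use the standard boundary-facet bubble argument: test with $b_e R^1_{J_K}\boldsymbol{\tau}^i$ extended into the adjacent element $K$, and integrate by parts on $K$. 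This produces a bound by $h_K\|\boldsymbol{R}_K\|_{0,K}$, which was already handled in Lemma~\ref{lma4.3}, plus $\|\nabla(\boldsymbol{u}-\boldsymbol{u}_h)\|_{0,K}+\|p-p_h\|_{0,K}$. The $\beta$-term is of lower order: by Lemma~\ref{lma2.4} and Poincaré it contributes $h_e\beta^2\|\boldsymbol{u}-\boldsymbol{u}_h\|_{0,e}^2\lesssim\|\boldsymbol{u}-\boldsymbol{u}_h\|^2_{1,K}$. With this, the boundary contribution is bounded by the local error plus the oscillation on $K$.

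Adding the three sums and taking square roots yields $\Psi\le C\bigl(\|(\boldsymbol{u}-\boldsymbol{u}_h,p-p_h,\psi-\psi_h)\|+\Theta\bigr)$, with $C$ independent of $h$.
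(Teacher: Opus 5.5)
Your proposal is correct. For the element and interior-facet residuals it is the paper's argument: the paper's proof is the single line ``combine Lemmas~\ref{lma4.3}, \ref{lma4.4} and \ref{lma4.5}'', and you only make the summation and the finite-overlap count explicit. That count is purely combinatorial, since each element has $n+1$ facets, so shape regularity is not needed there.

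The genuine difference is the boundary residual. The paper relies on Lemma~\ref{lma4.5}, whose proof claims, citing only the trace inequality, that both $h_e^{-1}\|(\boldsymbol{u}-\boldsymbol{u}_h)\cdot\boldsymbol{n}\|_{0,e}^2$ and $h_e\|\mu\boldsymbol{n}^t\nabla(\boldsymbol{u}-\boldsymbol{u}_h)\boldsymbol{\tau}^i+\beta(\boldsymbol{u}-\boldsymbol{u}_h)\cdot\boldsymbol{\tau}^i\|_{0,e}^2$ are bounded by $\|\nabla(\boldsymbol{u}-\boldsymbol{u}_h)\|_{0,K}^2$. Because $\boldsymbol{u}-\boldsymbol{u}_h$ is not a polynomial, Lemma~\ref{lma2.4} leaves $h_K^{-2}\|\boldsymbol{u}-\boldsymbol{u}_h\|_{0,K}^2$ and $h_K^{2}|\boldsymbol{u}-\boldsymbol{u}_h|_{2,K}^2$ uncontrolled. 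So that justification does not go through, exactly as you suspected.

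Your replacement is the sound one.
\begin{itemize}
\item The normal part is literally the boundary contribution of the error norm \eqref{norm}, because $\boldsymbol{u}\cdot\boldsymbol{n}=0$ on $\Gamma_{\mathrm{Nav}}$.
\item The tangential part follows from the boundary-facet bubble $\boldsymbol{w}=b_e r\,\boldsymbol{\tau}^i$. This is an admissible test function in \eqref{wf}: $\boldsymbol{n}$ is constant on the flat facet $e$, so $\boldsymbol{w}\cdot\boldsymbol{n}=0$ there, and $\boldsymbol{w}$ vanishes on $\partial K\setminus e$.
\end{itemize}

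Compared with the claimed statement of Lemma~\ref{lma4.5}, your route costs locality and brevity. The bound for $\Psi_{J_K}$ now also involves $\|p-p_h\|_{0,K}$, $h_K\|\boldsymbol{R}_K\|_{0,K}$, the data oscillation, and the volume terms $(\mathcal{K}(\psi)-\mathcal{K}(\psi_h))\boldsymbol{E}$ and $(\boldsymbol{u}\cdot\nabla\psi-\boldsymbol{u}_h\cdot\nabla\psi_h)\boldsymbol{E}$. You should list those volume terms explicitly; they are handled exactly like $T_2$ and $T_3$ in the proof of Lemma~\ref{lma4.3}.

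In addition, the $\beta$-term produces $\|\boldsymbol{u}-\boldsymbol{u}_h\|_{0,K}$. This is controlled only after summation, through a global Friedrichs inequality for $\boldsymbol{u}-\boldsymbol{u}_h$, which vanishes on $\Gamma_D$. Since the theorem is a global statement, this suffices, and your version is the one that actually proves it.
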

\begin{proof}
    Combine the \Cref{lma4.3,lma4.4,lma4.5} implies the required results.
\end{proof}

\section{Numerical experiments}\label{sec:ne}
In this section, we present numerical experiments to verify the theoretical convergence rates and to demonstrate the performance of the proposed methods. All computations are carried out using the open-source finite element library FEniCS \cite{alnaes2015fenics}, together with the MUMPS distributed direct solver \cite{amestoy2000mumps} and a Newton solver. In some test cases, uniform meshes are employed, whereas in others, adaptive mesh refinement is utilized. In particular, starting from an initial mesh $\mathcal{T}_{0,h}$, we perform the standard adaptive refinement cycle
\[
\text{Solve} \to \text{Estimate} \to \text{Mark} \to \text{Refine}
\]
to generate a sequence of nested, shape-regular meshes \{ $\mathcal{T}_l$\} with corresponding mesh sizes $h_l$. At each iteration, local error indicators $\Psi_K$ are computed for all elements $K$ in the current mesh $\mathcal{T}_h$, and those elements $K \in \mathcal{T}_h$ satisfying
\[
\Psi_K \geq \Tilde{\theta} \max \{\Psi_K \colon K \in \mathcal{T}_h\}
\]
are selected for refinement, where $\Tilde{\theta} \in (0,1)$ is a user-defined parameter. We assess the quality of the a posteriori error estimator through the so-called effectivity
index, which is required to remain bounded as $h$ approaches zero and is defined by 
\begin{align*}
\text{Effectivity}
=
\frac{\Psi}
{\|(\boldsymbol{u}-\boldsymbol{u}_h,\, p-p_h,\, \psi-\psi_h)\|}.
\end{align*}

\begin{example}[\textbf{Convergence test}]\label{ex1}
    {\normalfont We consider the domain $\Omega = (0,1)^2$ and two viscosity values 
$\mu \in \{0.01, 1\}$. The exact solution is given by
\begin{align*}
\boldsymbol{u}(x,y) = \boldsymbol{curl}\!\left( x_1^{2}(1-x_1)^{2}\, x_2^{2} \right),
\qquad
p(x,y) = \sin(\pi x)\sin(\pi y), \qquad
\psi(x,y) = x(1-x)\,y(1-y).
\end{align*}
The source terms $\boldsymbol{f}$ and $g$ are computed using the above exact solution.
We impose the Navier--slip boundary condition on the boundaries $x=1$ and $y=1$, 
while essential boundary conditions are enforced on the remaining parts of $\Gamma$. The numerical results are computed using the 
$\mathbb{P}_2^2$--$\mathbb{P}_1$--$\mathbb{P}_2$ finite element pair. 
The parameters are chosen as 
$\boldsymbol{E} = (1,-1)^{t}$, $k_0 = 1$, $k_1 = 1$, 
$\gamma = 10$, $\varepsilon = 1.0$, and $\beta = 1$.

\Cref{tab1} presents the \textit{a priori} errors 
$\|\boldsymbol{u}-\boldsymbol{u}_h\|_{1,h}$, 
$\|p-p_h\|$, 
$\|\psi-\psi_h\|_{1}$, 
$\|(\boldsymbol{u}-\boldsymbol{u}_h,\, p-p_h,\, \psi-\psi_h)\|$, 
and $\Psi$. 
The corresponding convergence rates are in good agreement with the theoretical results.

The numerical results demonstrate that the proposed method maintains accuracy even for small viscosity values. The last column in \Cref{tab1} shows that the effectivity index remains bounded as $h \to 0$ for different values of $\mu.$
   } \end{example}

\begin{table*}[!t]
\caption{Convergence results for velocity, pressure, potential, total error and estimator for $\beta =1$.}
\label{tab1}
\centering
\footnotesize
\setlength{\tabcolsep}{4pt}
\renewcommand{\arraystretch}{1.2}
\begin{tabular}{cccccccccccccc}
\toprule
\multirow{2}{*}{$h$} &
\multicolumn{2}{c}{ $\|\boldsymbol{u}-\boldsymbol{u}_h\|_{1,h}$} &
\multicolumn{2}{c}{$\|p-p_h\|$} &
\multicolumn{2}{c}{$\|\psi-\psi_h\|_1$} &
\multicolumn{2}{c}{Total Error} &
\multicolumn{2}{c}{Estimator $\Psi$} &
\multirow{2}{*}{Effectivity} \\
\cmidrule(lr){2-3}\cmidrule(lr){4-5}\cmidrule(lr){6-7}
\cmidrule(lr){8-9}\cmidrule(lr){10-11}
& Error & OC
& Error & OC
& Error & OC
& Error & OC
& Error & OC \\
\midrule
$\mu$ = 0.01 & & & & & & & & & & & \\
\midrule
0.3536
& 1.3858E+00 & --
& 2.7786E-02 & --
& 8.2991E-03 & --
& 1.3944E+00 & --
& 1.3569E+00 & --
& 0.9731 \\

0.1768
& 2.0355E-01 & 2.7673
& 6.5996E-03 & 2.0739
& 2.1109E-03 & 1.9751
& 2.0570E-01 & 2.7611
& 2.0493E-01 & 2.7271
& 0.9963 \\

0.0884
& 3.0291E-02 & 2.7484
& 1.6183E-03 & 2.0279
& 5.3057E-04 & 1.9923
& 3.0826E-02 & 2.7383
& 3.2300E-02 & 2.6656
& 1.0478 \\

0.0442
& 4.8430E-03 & 2.6449
& 4.0237E-04 & 2.0079
& 1.3283E-04 & 1.9980
& 4.9763E-03 & 2.6310
& 5.7974E-03 & 2.4780
& 1.1650 \\

0.0221
& 8.2329E-04 & 2.5564
& 1.0045E-04 & 2.0021
& 3.3219E-05 & 1.9995
& 8.5657E-04 & 2.5384
& 1.1804E-03 & 2.2962
& 1.3780 \\

0.0110
& 1.4651E-04 & 2.4904
& 2.5103E-05 & 2.0005
& 8.3056E-06 & 1.9999
& 1.5483E-04 & 2.4679
& 2.6279E-04 & 2.1673
& 1.6973 \\

0.0055
& 2.7217E-05 & 2.4284
& 6.2752E-06 & 2.0001
& 2.0765E-06 & 1.9999
& 2.9296E-05 & 2.4019
& 6.1747E-05 & 2.0895
& 2.1077 \\
\midrule
$\mu$ = 1
&  & 
&  & 
&  & 
&  & 
&  & 
&  \\
\midrule
0.3536
& 6.0187E-02 & --
& 2.8208E-02 & --
& 8.2771E-03 & --
& 6.8722E-02 & --
& 3.8372E-01 & --
& 5.5836 \\

0.1768
& 1.4636E-02 & 2.0399
& 6.6411E-03 & 2.0866
& 2.1109E-03 & 1.9713
& 1.6779E-02 & 2.0341
& 9.6020E-02 & 1.9986
& 5.7226 \\

0.0884
& 3.5352E-03 & 2.0497
& 1.6213E-03 & 2.0343
& 5.3057E-04 & 1.9922
& 4.0698E-03 & 2.0437
& 2.3385E-02 & 2.0377
& 5.7460 \\

0.0442
& 8.6322E-04 & 2.0340
& 4.0256E-04 & 2.0098
& 1.3283E-04 & 1.9980
& 9.9654E-04 & 2.0299
& 5.7226E-03 & 2.0308
& 5.7425 \\

0.0221
& 2.1288E-04 & 2.0197
& 1.0046E-04 & 2.0026
& 3.3219E-05 & 1.9995
& 2.4616E-04 & 2.0173
& 1.4121E-03 & 2.0189
& 5.7364 \\

0.0110
& 5.2827E-05 & 2.0107
& 2.5104E-05 & 2.0007
& 8.3056E-06 & 1.9999
& 6.1141E-05 & 2.0094
& 3.5048E-04 & 2.0104
& 5.7324 \\

0.0055
& 1.3155E-05 & 2.0057
& 6.2752E-06 & 2.0002
& 2.0765E-06 & 1.9999
& 1.5234E-05 & 2.0048
& 8.7291E-05 & 2.0054
& 5.7299 \\
\bottomrule
\end{tabular}
\end{table*}
\medskip
\begin{example}[\textbf{Non-convex domains}]\label{ex2}
    {\normalfont 
   We consider the non-convex C-shaped, L-shaped, and T-shaped domains 
$\Omega_C = ((-1,1)\times(-1,1))
\setminus
((-0.2,1)\times(-0.5,0.5)),\,
\Omega_L = (-1,1)^2 \setminus (0,1)^2,$ 
and 
$\Omega_T = \bigl((-1.5,1.5)\times(0,1)\bigr) \cup \bigl((-0.5,0.5)\times(-2,0)\bigr),$ 
respectively. 
We take the source functions $\boldsymbol{f} = (1,1)^T$ and $g = 1$. 
The parameters are chosen as $\boldsymbol{E} = (0, -1)^{t}, k_0 = 1, k_1 = 1, \beta = 1, \gamma = 25, \mu = 1,$ and $\varepsilon = 1$. 

Since the exact solution is unknown for these problems, we focus on the behavior of the numerical solution. The presence of re-entrant corners induces singularities in the solution. As a result, uniform mesh refinement yields suboptimal convergence rates and does not adequately capture the local behavior near the corners. In contrast, adaptive mesh refinement performs significantly better.
As observed in \Cref{fig1}, the adaptive method selectively refines the mesh near the re-entrant corners. Such localized refinement effectively captures the singular behavior and improves the accuracy of the numerical solution.

Furthermore, \Cref{fig3,fig4,fig5} present detailed plots of the numerical solutions. 
These figures clearly show that adaptive refinement produces a finer mesh near the re-entrant corners, leading to improved accuracy and a better representation of the solution.

\begin{figure}[!htbp] 
    \centering
    \begin{subfigure}[b]{0.3\textwidth}
        \centering
        \includegraphics[width=\textwidth]{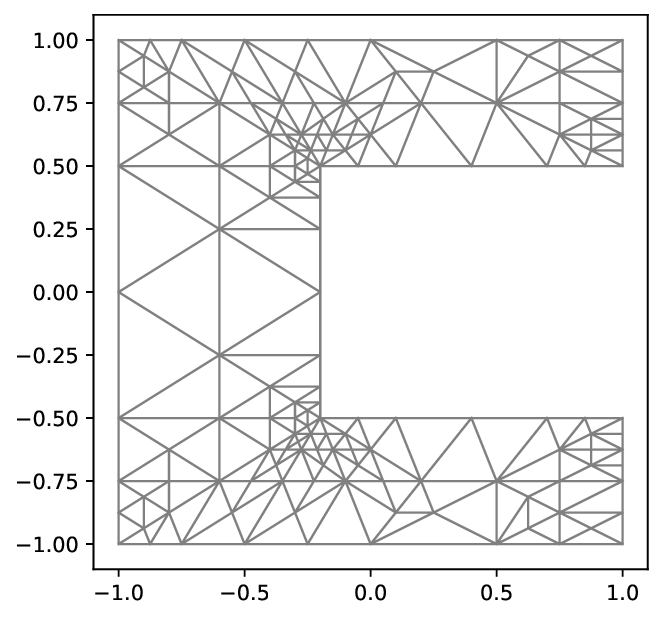}
         \caption{Dofs - $1205$}
    \end{subfigure}
    \hfill
    \begin{subfigure}[b]{0.3\textwidth}
        \centering
        \includegraphics[width=\textwidth]{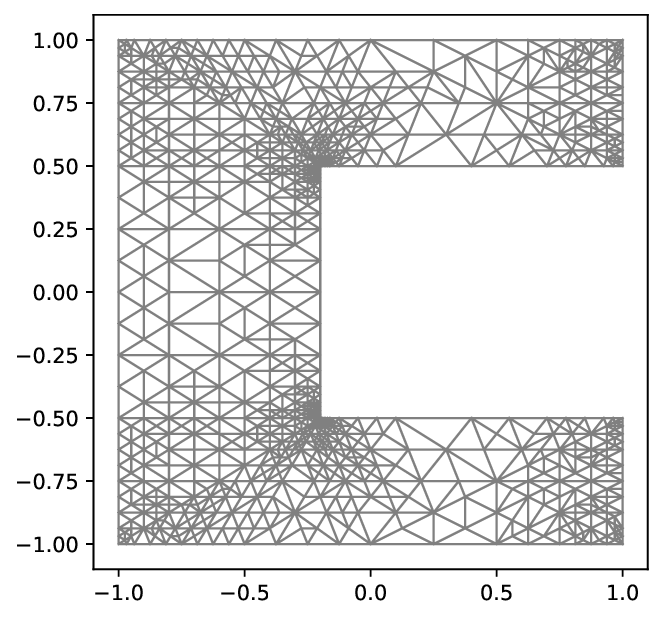}
        \caption{Dofs - $9766$}
    \end{subfigure}
    \hfill
    \begin{subfigure}[b]{0.3\textwidth}
        \centering
        \includegraphics[width=\textwidth]{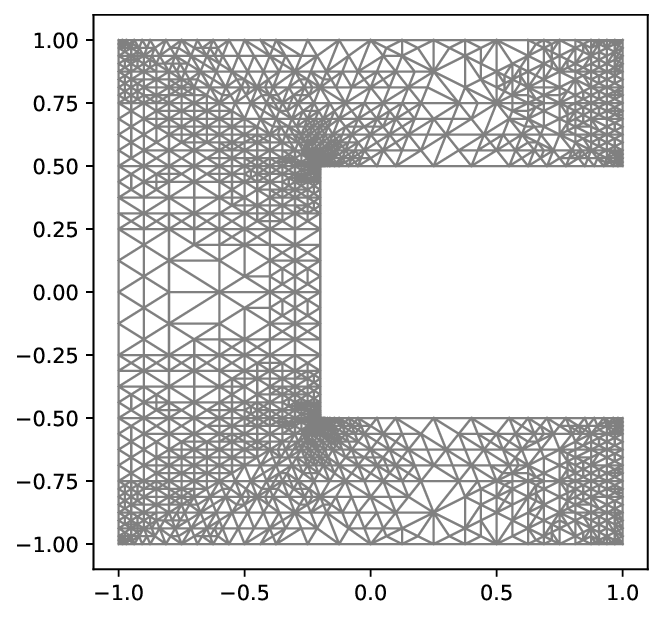}
        \caption{Dofs - $19297$}
    \end{subfigure}
     \hfill
    \begin{subfigure}[b]{0.3\textwidth}
        \centering
        \includegraphics[width=\textwidth]{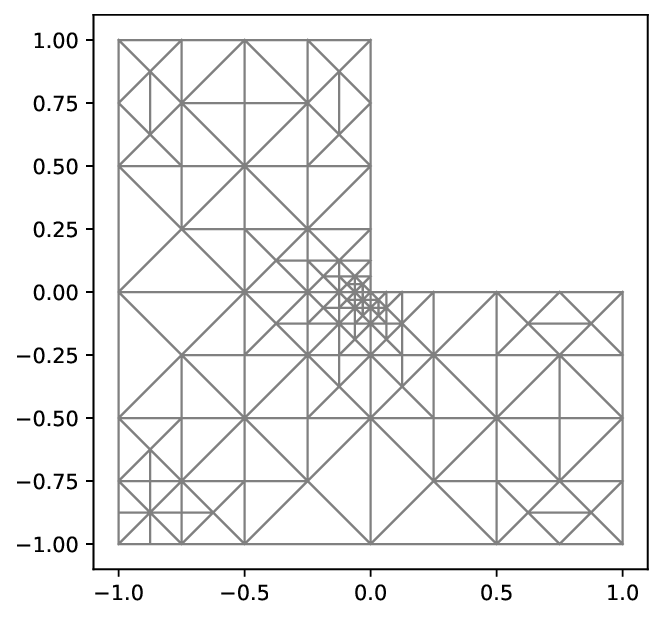}
         \caption{Dofs - $1137$}
    \end{subfigure}
    \hfill
    \begin{subfigure}[b]{0.3\textwidth}
        \centering
        \includegraphics[width=\textwidth]{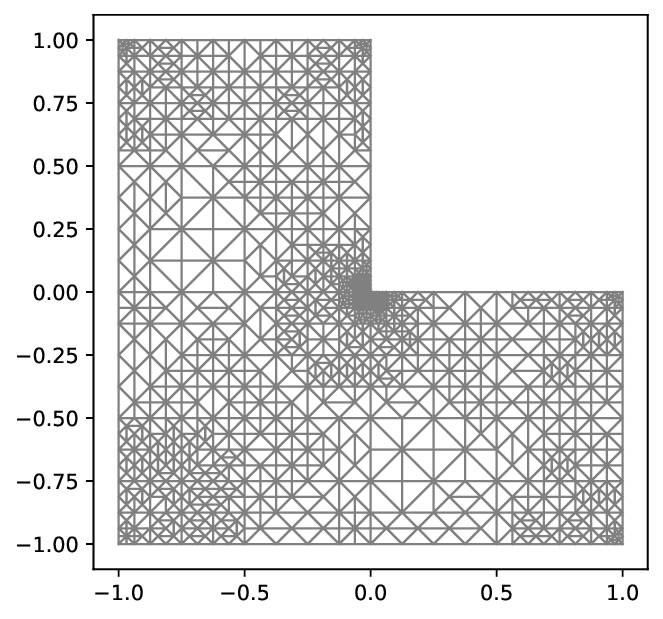}
        \caption{Dofs - $11834$}
    \end{subfigure}
    \hfill
    \begin{subfigure}[b]{0.3\textwidth}
        \centering
        \includegraphics[width=\textwidth]{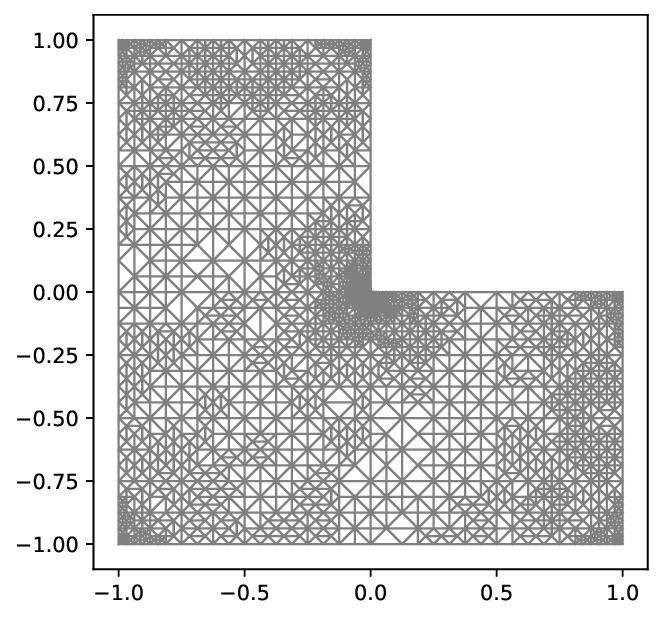}
        \caption{Dofs - $20788$}
    \end{subfigure}
         \begin{subfigure}[b]{0.3\textwidth}
        \centering
        \includegraphics[width=\textwidth]{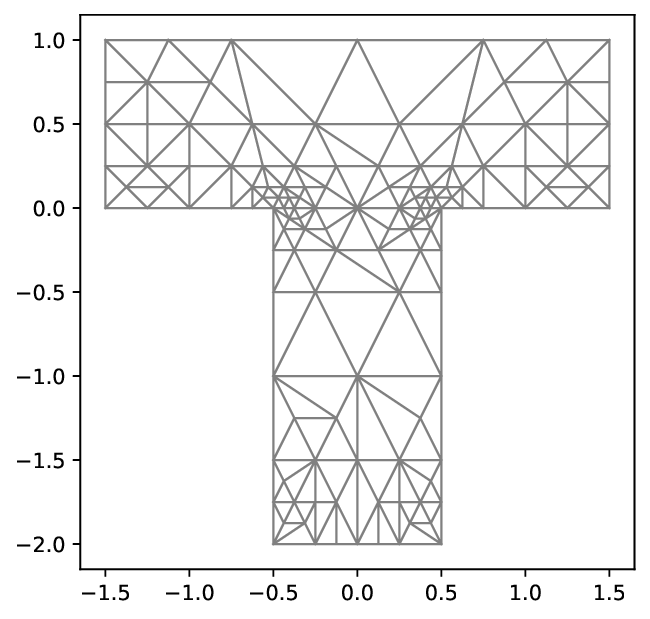}
         \caption{Dofs - $1014$}
    \end{subfigure}
    \hfill
    \begin{subfigure}[b]{0.3\textwidth}
        \centering
        \includegraphics[width=\textwidth]{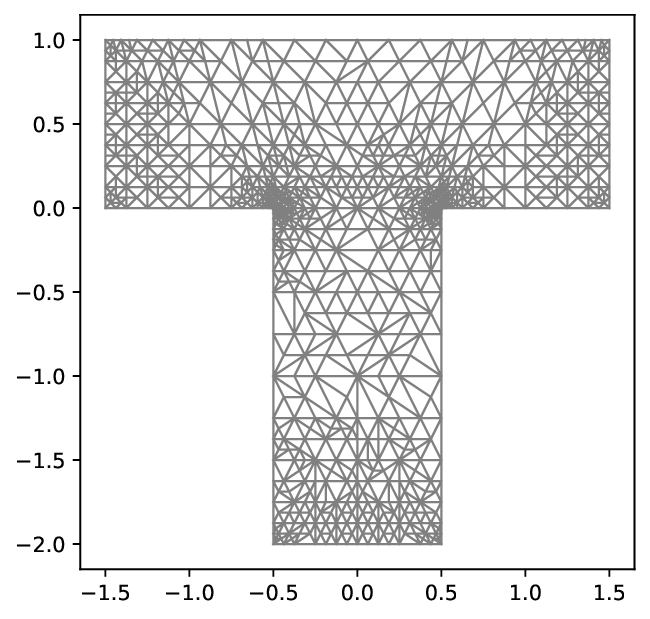}
        \caption{Dofs - $10313$}
    \end{subfigure}
    \hfill
    \begin{subfigure}[b]{0.3\textwidth}
        \centering
        \includegraphics[width=\textwidth]{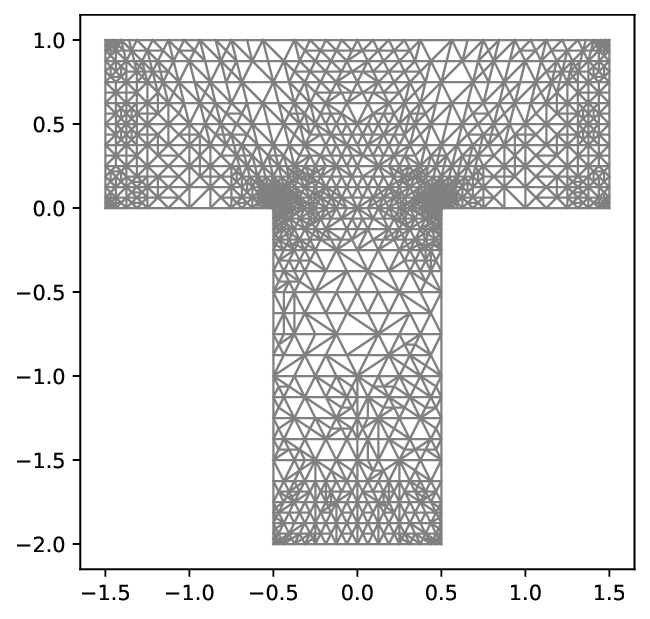}
        \caption{Dofs - $21219$}
    \end{subfigure}
    \caption{\Cref{ex2}: The refined meshes obtained by using the adaptive strategy for C, L and T shape.}\label{fig1}
 \end{figure}

\begin{figure}[!htbp]
    \begin{subfigure}[b]{0.3\textwidth}
        \centering
        \includegraphics[width=\textwidth]{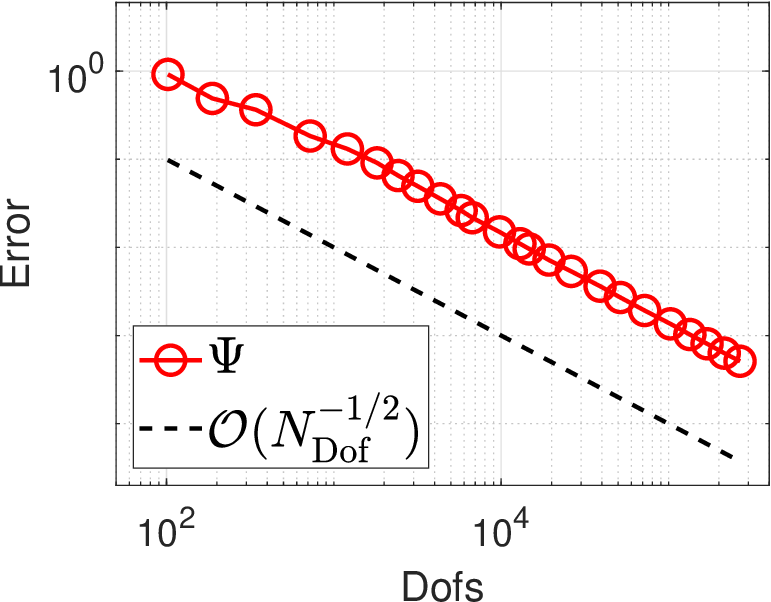}
         \caption{C shape}
    \end{subfigure}
    \hfill
    \begin{subfigure}[b]{0.3\textwidth}
        \centering
        \includegraphics[width=\textwidth]{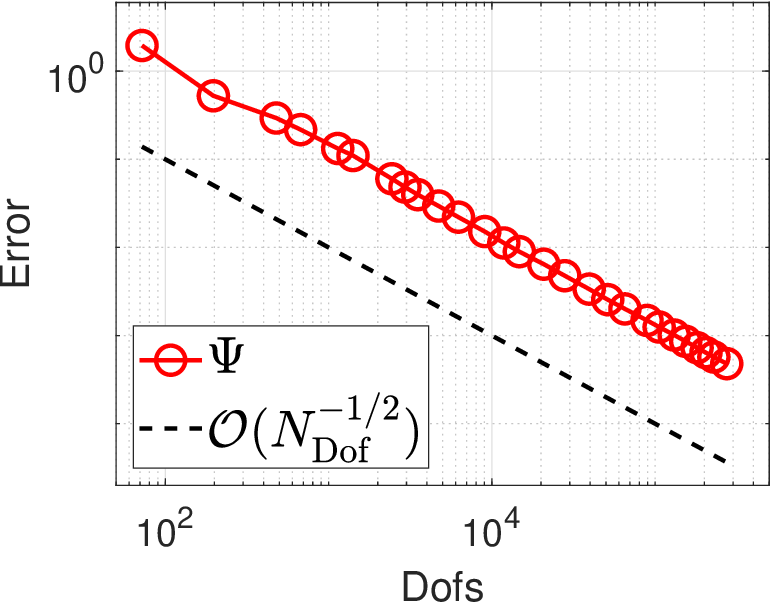}
        \caption{L shape}
    \end{subfigure}
    \hfill
    \begin{subfigure}[b]{0.3\textwidth}
        \centering
        \includegraphics[width=\textwidth]{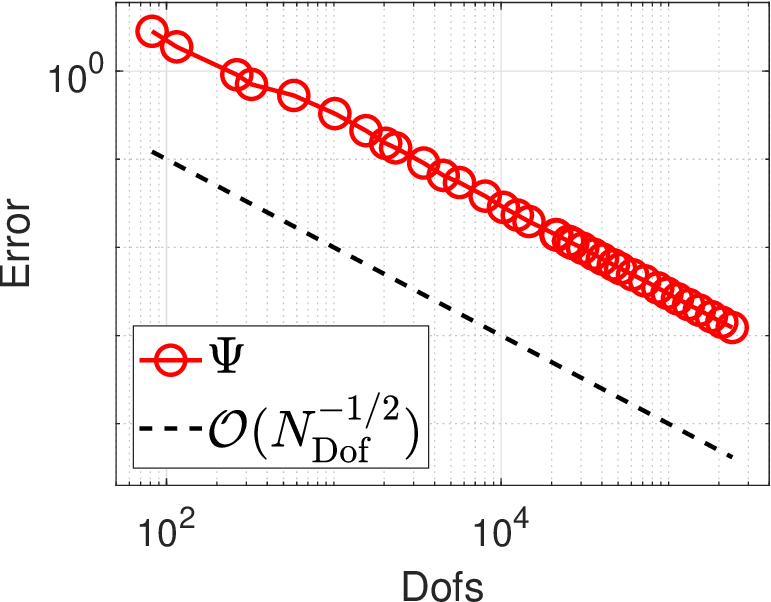}
        \caption{T shape}
    \end{subfigure}
    \caption{\Cref{ex2}: Convergence plots for C, L and T shapes.}\label{fig2}
\end{figure}

\begin{figure}[!htbp]
    \centering
    \begin{subfigure}[b]{0.3\textwidth}
        \centering
        \includegraphics[width=\textwidth]{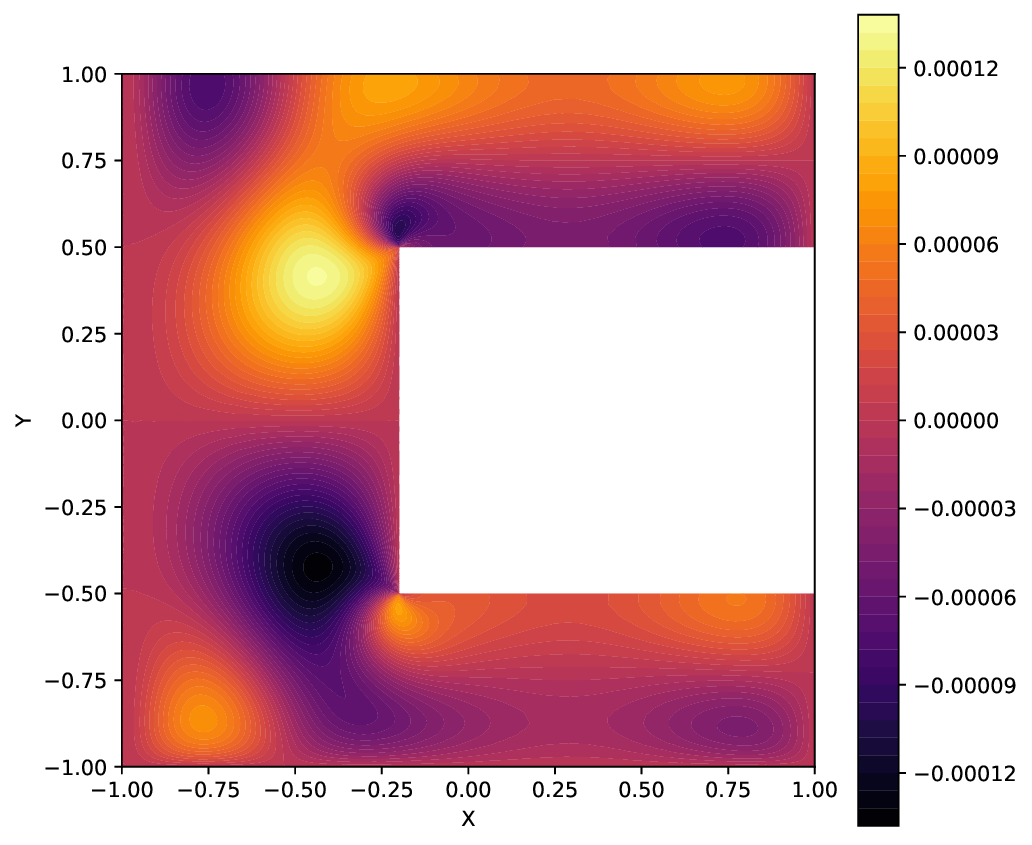}
        \caption{$u_{1_h}$}
    \end{subfigure}
    \begin{subfigure}[b]{0.3\textwidth}
        \centering
        \includegraphics[width=\textwidth]{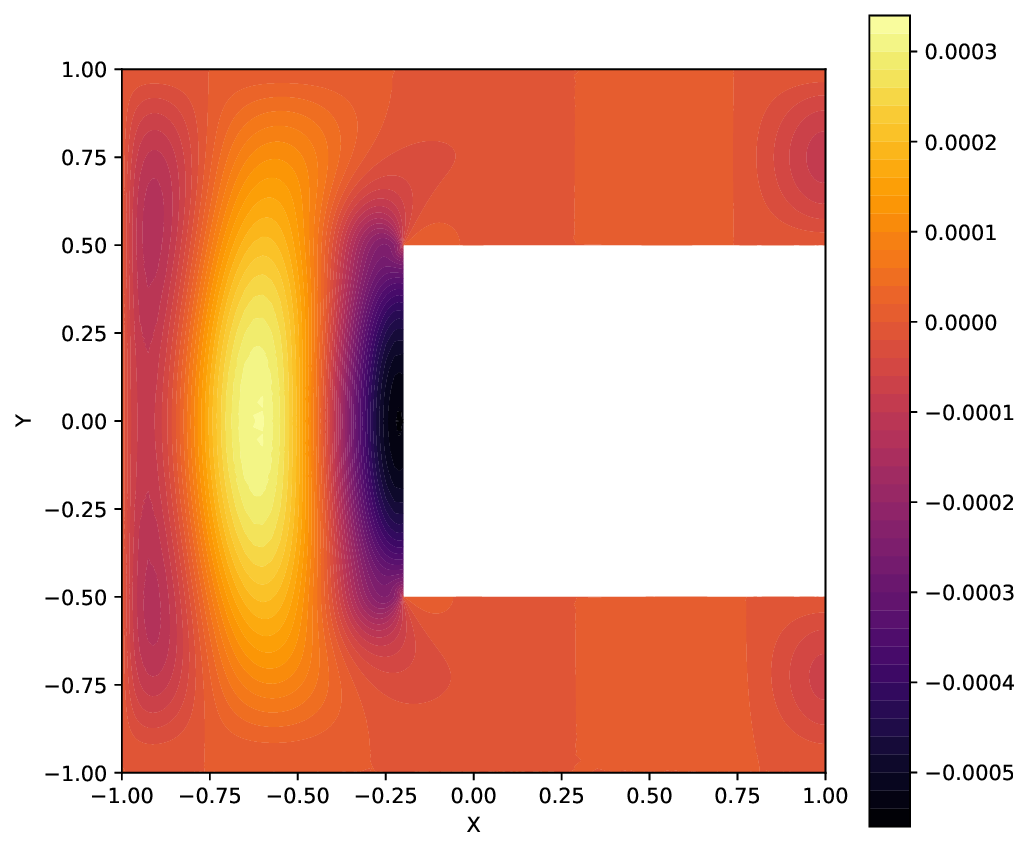}
        \caption{$u_{2_h}$}
    \end{subfigure}
    \begin{subfigure}[b]{0.3\textwidth}
        \centering
        \includegraphics[width=\textwidth]{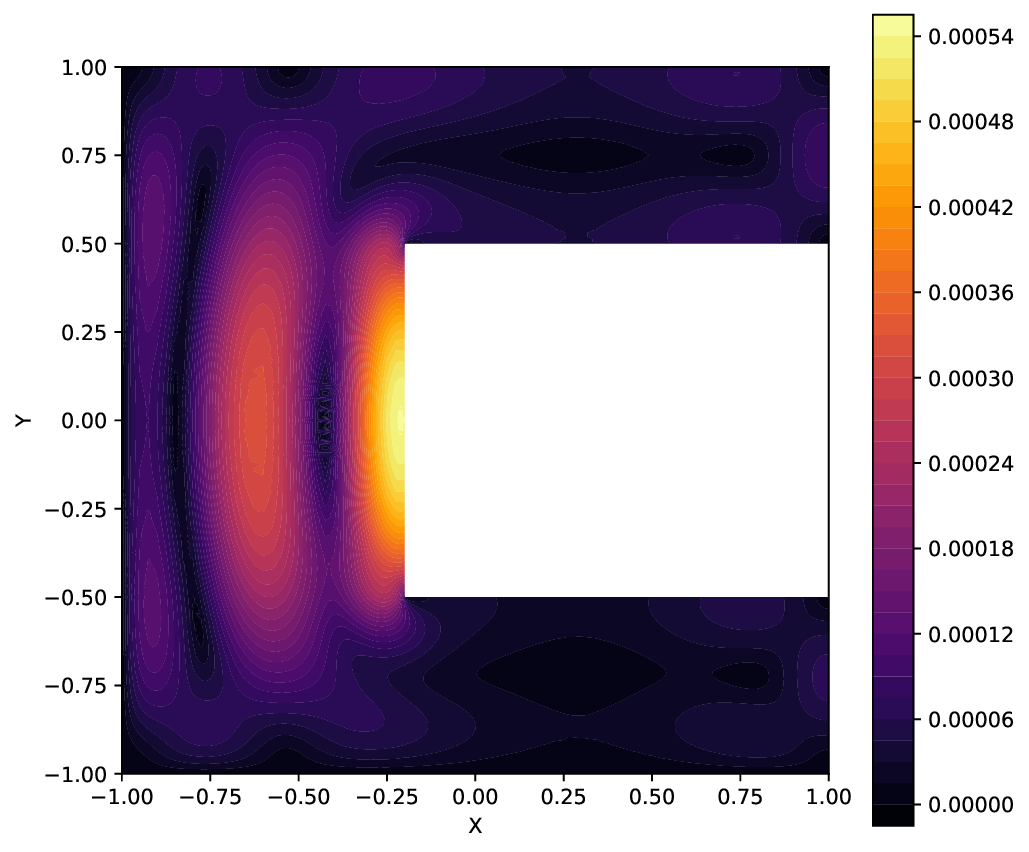}
        \caption{$|u_{h}|$}
    \end{subfigure}
    \centering
    \begin{subfigure}[b]{0.3\textwidth}
        \centering
        \includegraphics[width=\textwidth]{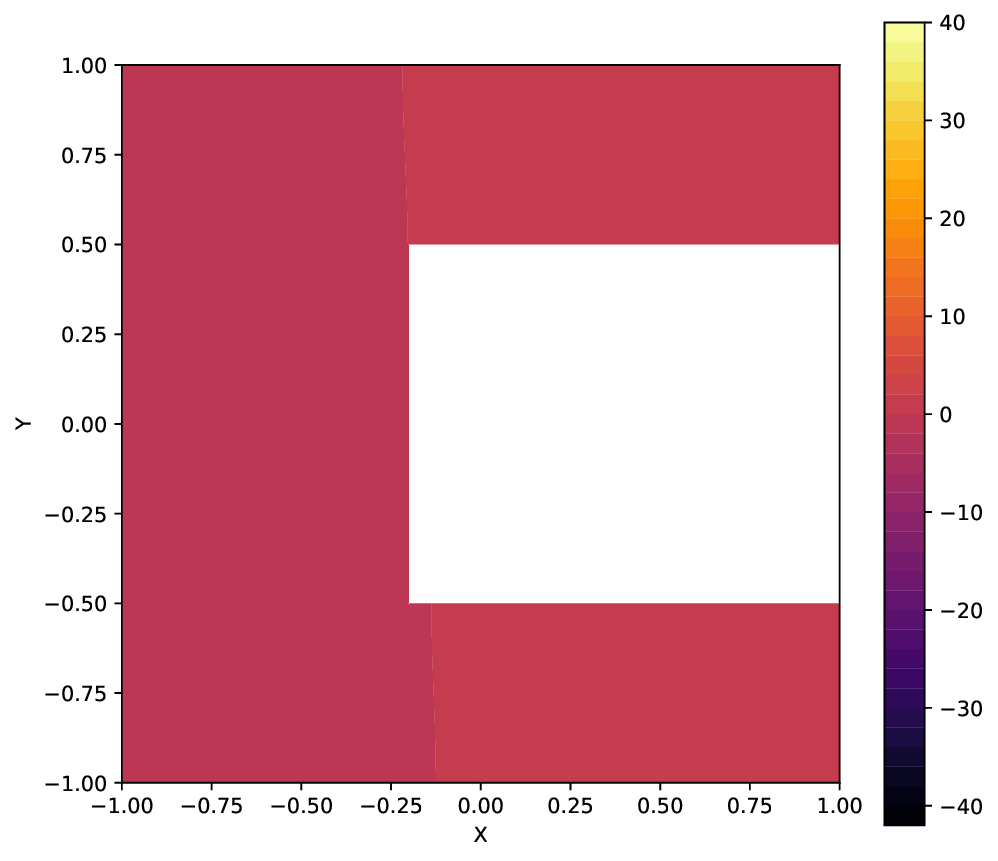}
        \caption{$p_h$}
    \end{subfigure}
    \begin{subfigure}[b]{0.3\textwidth}
        \centering
        \includegraphics[width=\textwidth]{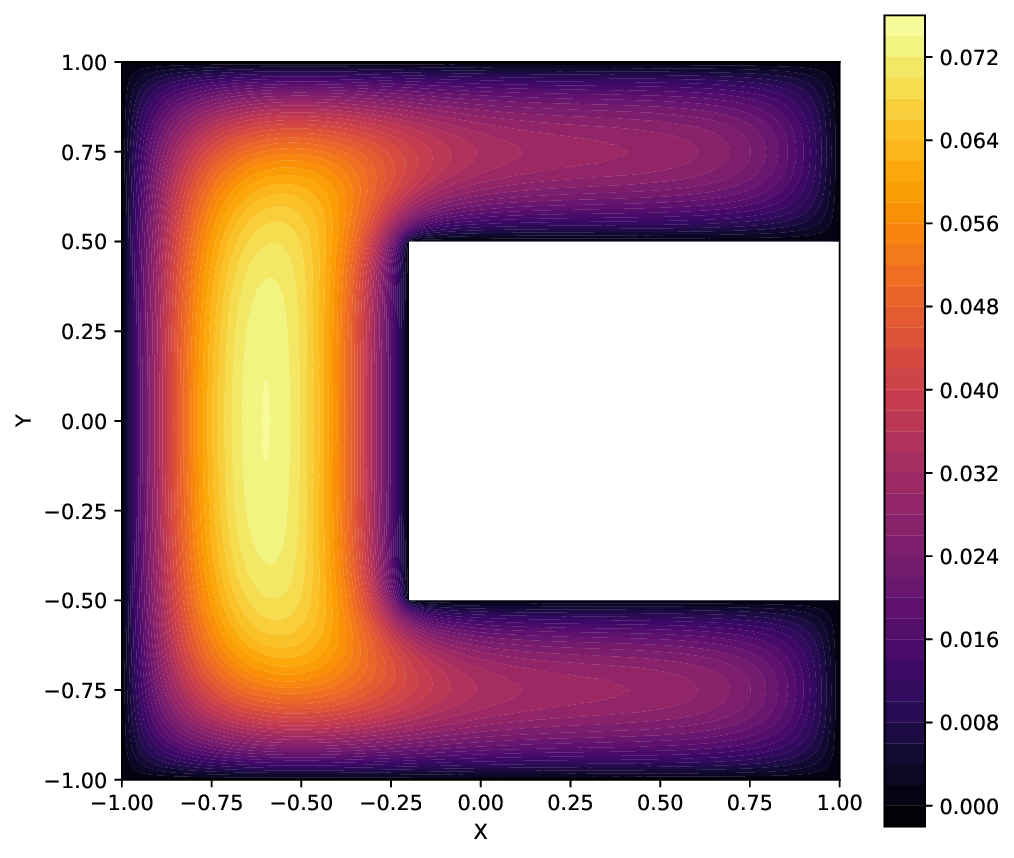}
        \caption{$\psi_h$}
    \end{subfigure}
    \caption{\Cref{ex2}: Plots of numerical solutions of the velocity $\boldsymbol{u}_h\! =\!(u_{1_h}, u_{2_h})$, pressure $p_h$ and potential $\psi_h$ for C-shape domain.}\label{fig3}
\end{figure}

\begin{figure}[!htbp]
    \centering
    \begin{subfigure}[b]{0.3\textwidth}
        \centering
        \includegraphics[width=\textwidth]{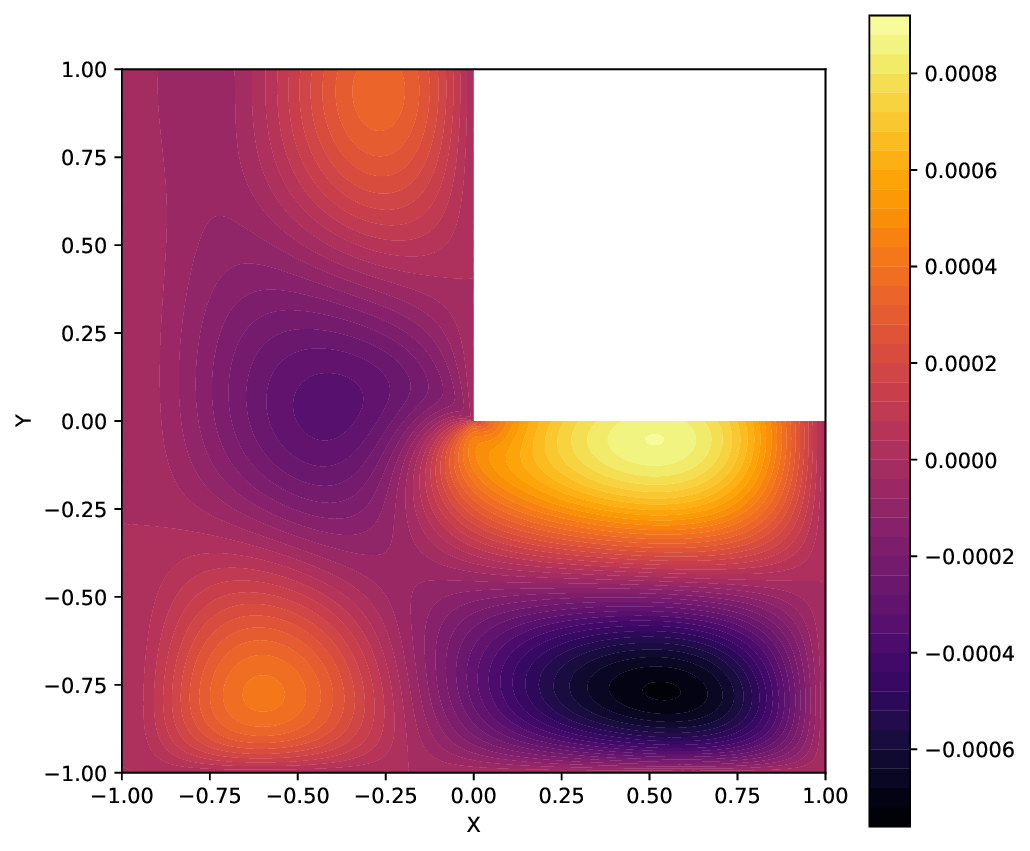}
        \caption{$u_{1_h}$}
    \end{subfigure}
    \begin{subfigure}[b]{0.3\textwidth}
        \centering
        \includegraphics[width=\textwidth]{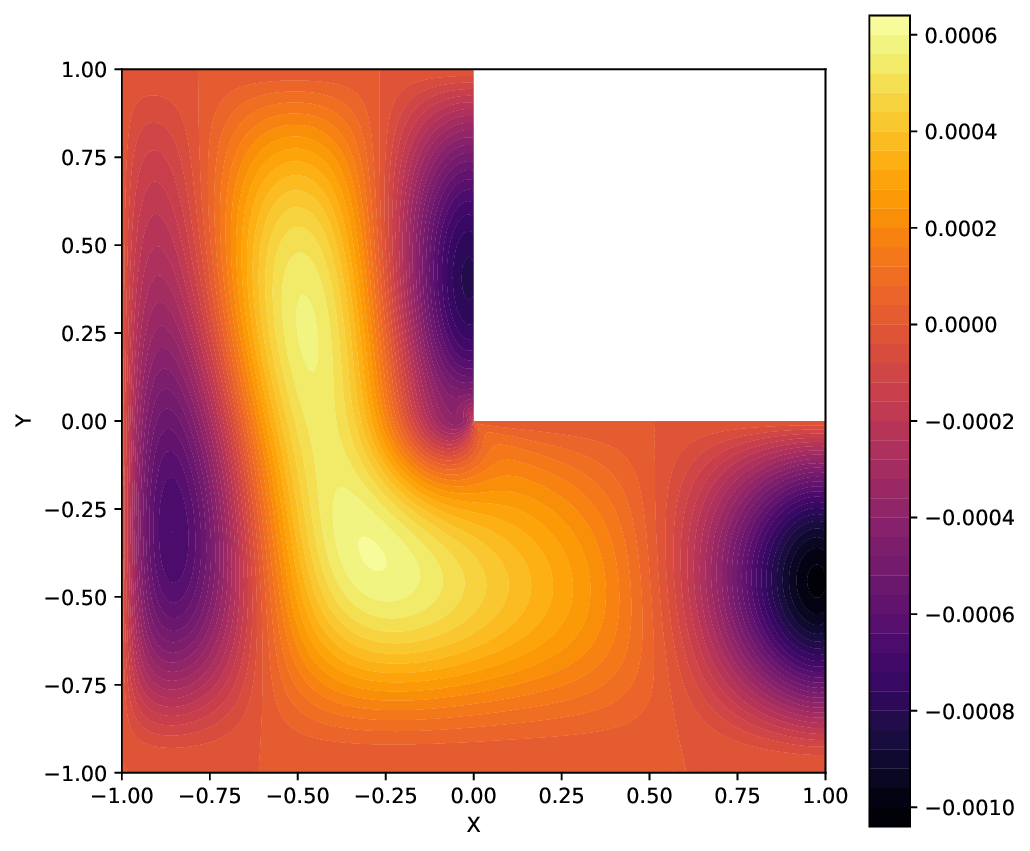}
        \caption{$u_{2_h}$}
    \end{subfigure}
    \begin{subfigure}[b]{0.3\textwidth}
        \centering
        \includegraphics[width=\textwidth]{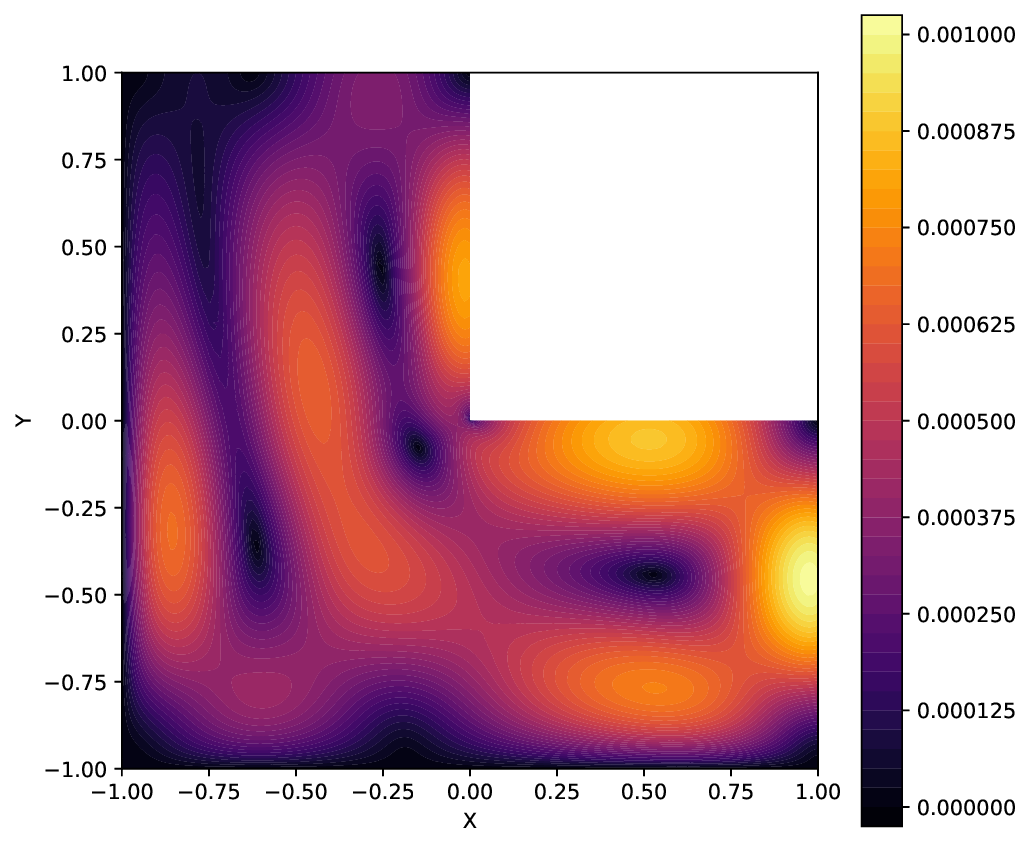}
        \caption{$|u_{h}|$}
    \end{subfigure}
    \centering
    \begin{subfigure}[b]{0.3\textwidth}
        \centering
        \includegraphics[width=\textwidth]{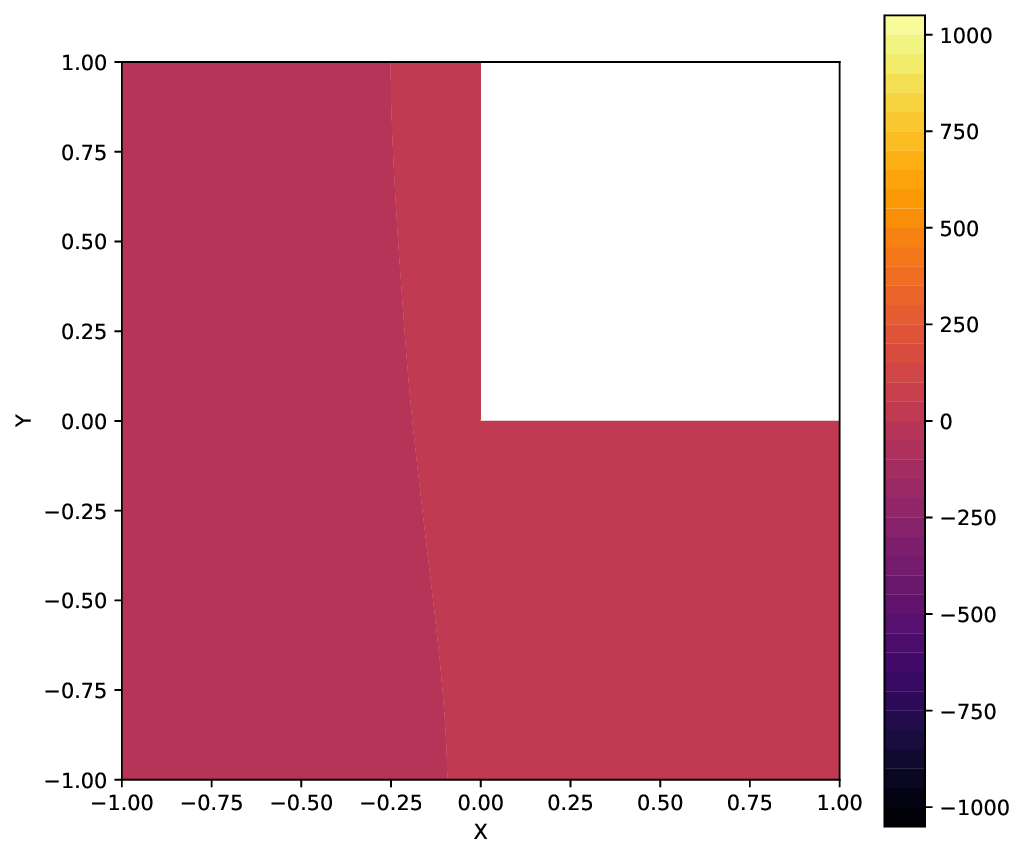}
        \caption{$p_h$}
    \end{subfigure}
    \begin{subfigure}[b]{0.3\textwidth}
        \centering
        \includegraphics[width=\textwidth]{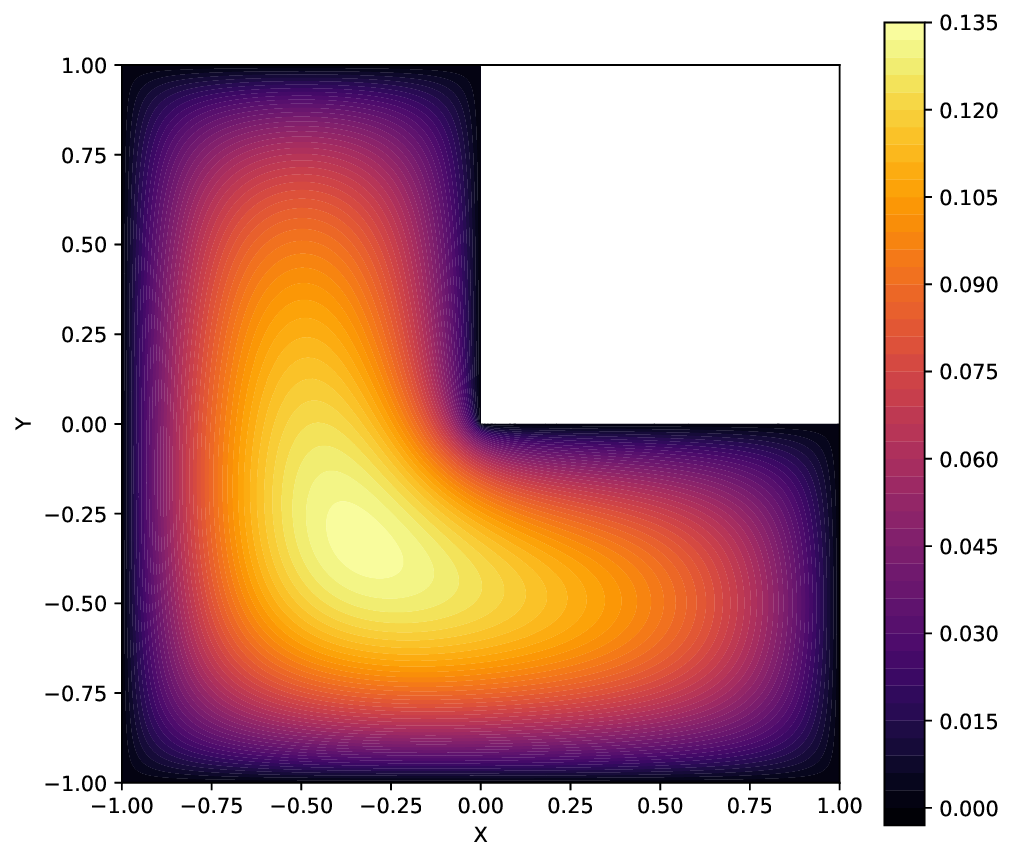}
        \caption{$\psi_h$}
    \end{subfigure}
    \caption{\Cref{ex2}: Plots of numerical solutions of the velocity $\boldsymbol{u}_h\! =\!(u_{1_h}, u_{2_h})$, pressure $p_h$ and potential $\psi_h$ for L-shape domain.}\label{fig4}
\end{figure}

\begin{figure}[!htbp]
    \centering
    \begin{subfigure}[b]{0.3\textwidth}
        \centering
        \includegraphics[width=\textwidth]{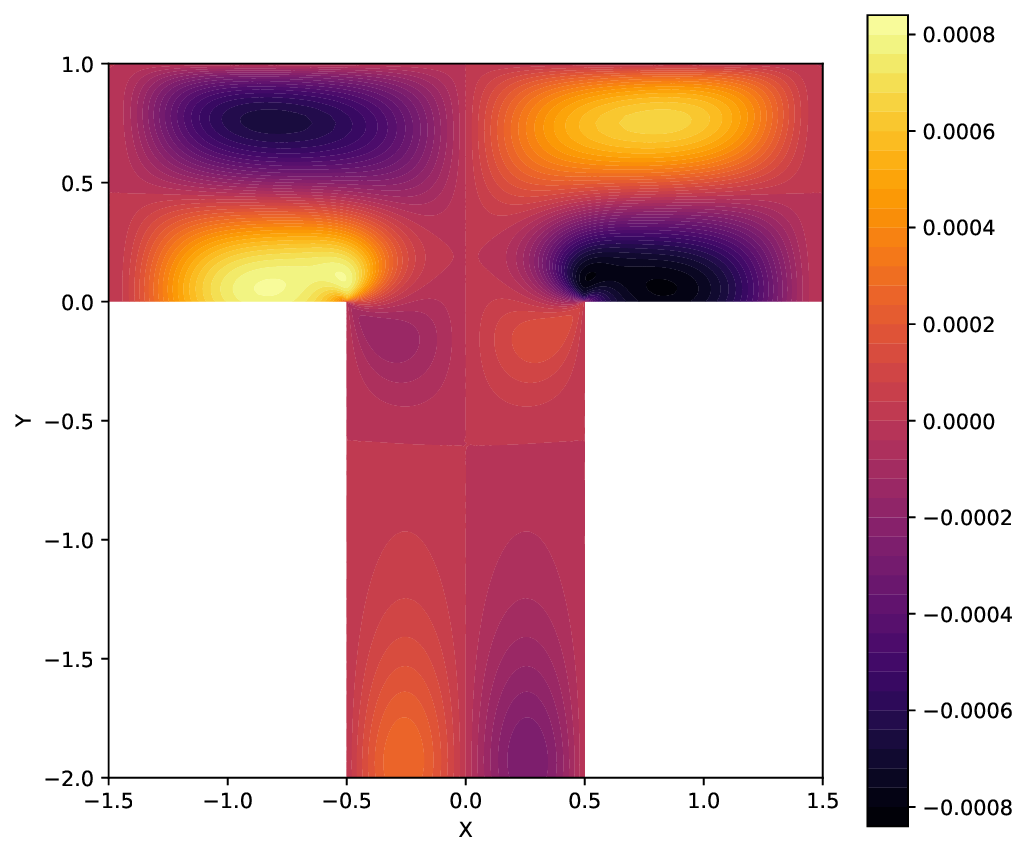}
        \caption{$u_{1_h}$}
    \end{subfigure}
    \begin{subfigure}[b]{0.3\textwidth}
        \centering
        \includegraphics[width=\textwidth]{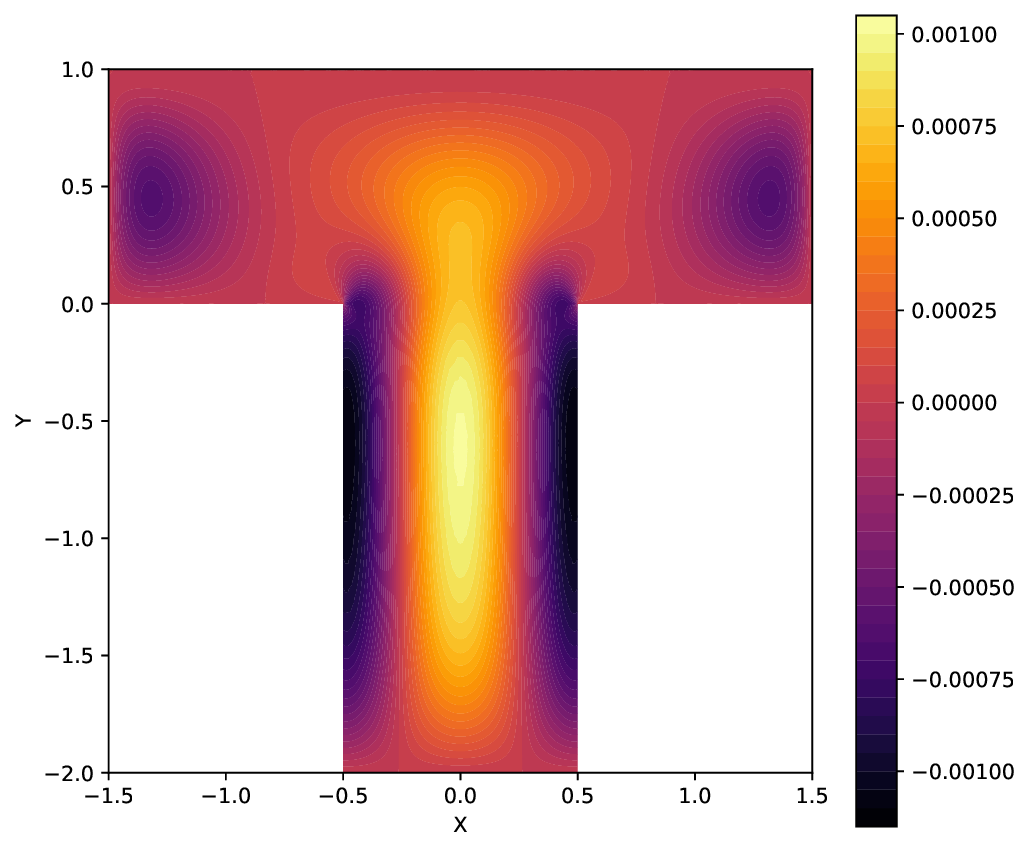}
        \caption{$u_{2_h}$}
    \end{subfigure}
    \begin{subfigure}[b]{0.3\textwidth}
        \centering
        \includegraphics[width=\textwidth]{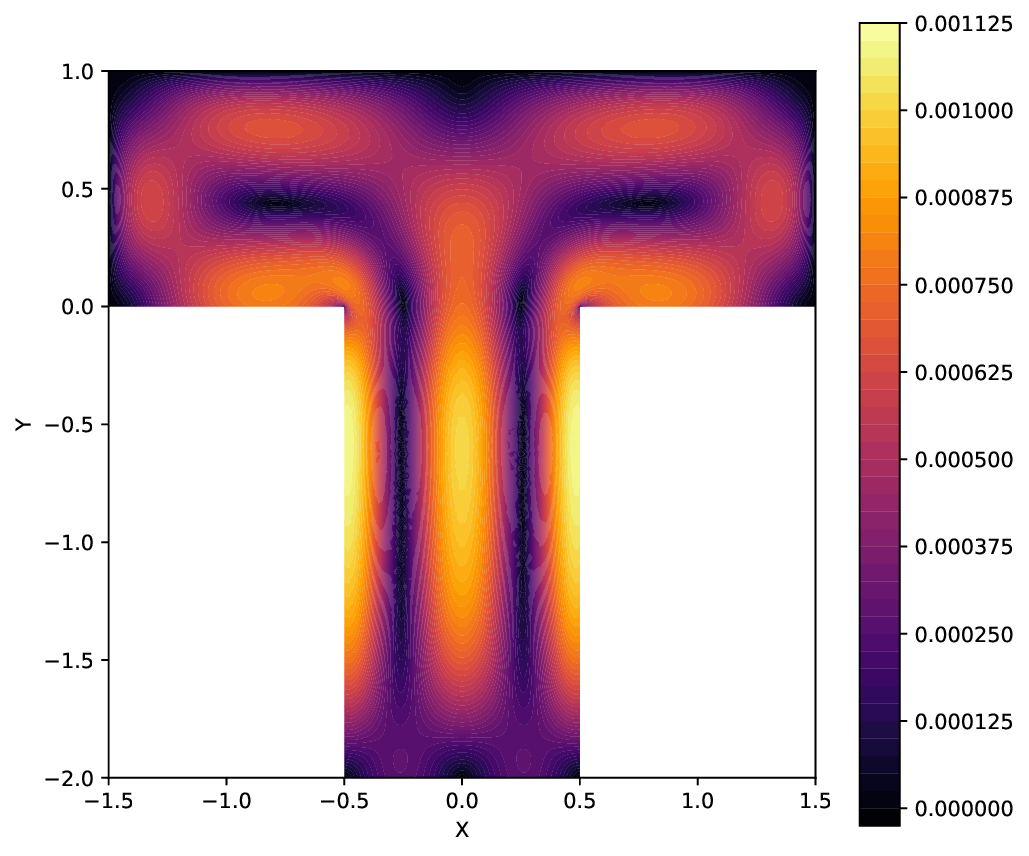}
        \caption{$|u_{h}|$}
    \end{subfigure}
    \centering
    \begin{subfigure}[b]{0.3\textwidth}
        \centering
        \includegraphics[width=\textwidth]{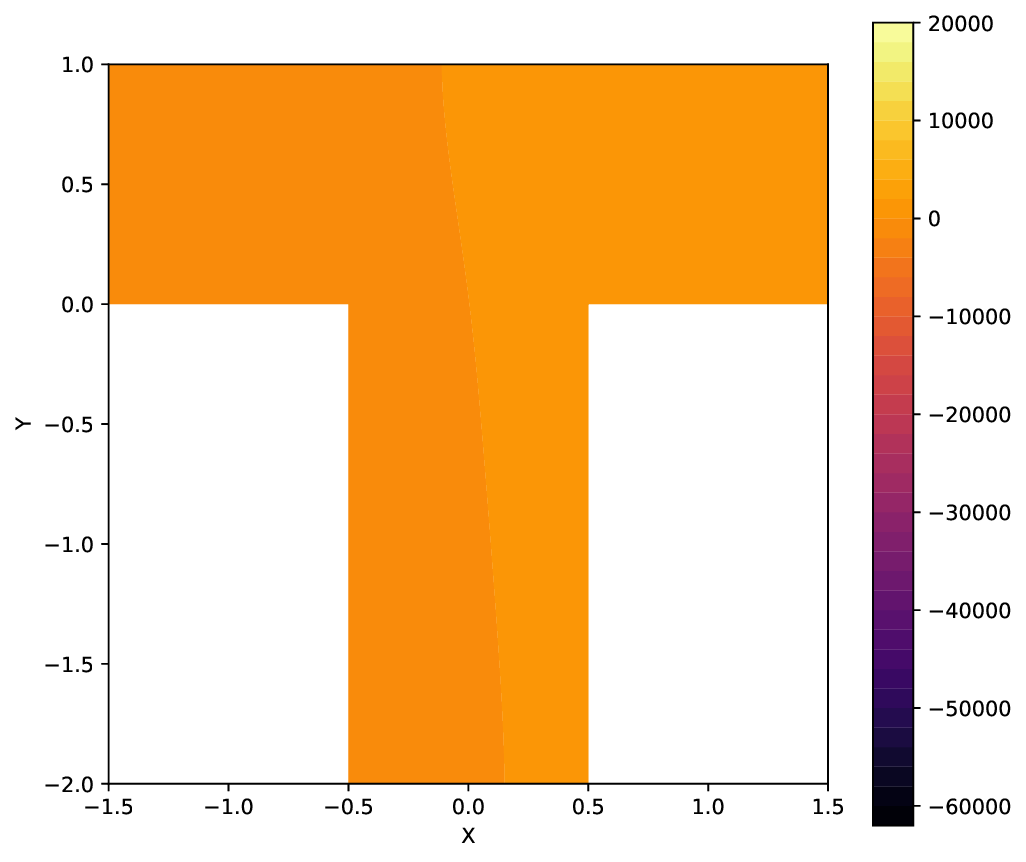}
        \caption{$p_h$}
    \end{subfigure}
    \begin{subfigure}[b]{0.3\textwidth}
        \centering
        \includegraphics[width=\textwidth]{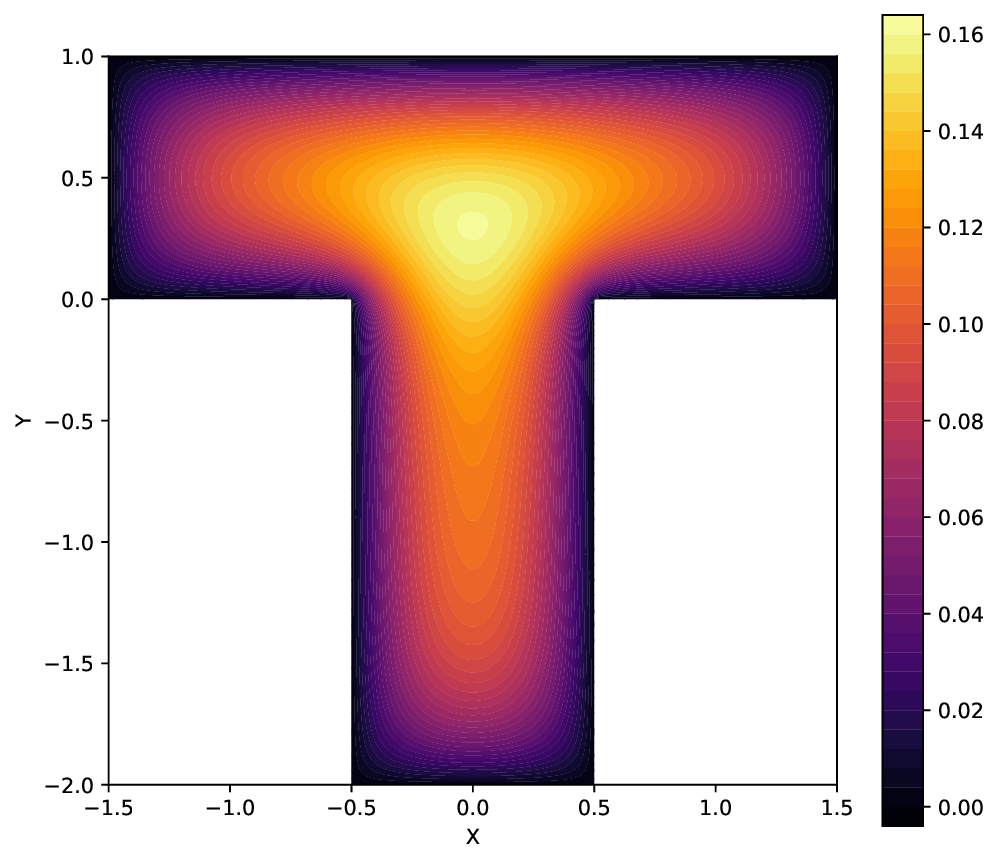}
        \caption{$\psi_h$}
    \end{subfigure}
    \caption{\Cref{ex2}: Plots of numerical solutions of the velocity $\boldsymbol{u}_h\! =\!(u_{1_h}, u_{2_h})$, pressure $p_h$ and potential $\psi_h$ for T-shape domain.}\label{fig5}
\end{figure}
}\end{example}
\medskip
\begin{example}[\textbf{Adaptive mesh refinement for a boundary layers problem}]\label{ex3}
{\normalfont
In this example, we consider the triangular domain 
$\Omega = \{(x,y)\,\colon\, x>0,\, y>0,\, x+y<1\}$ 
with $\boldsymbol{E} = (-10,0)^{t}$, $\varepsilon = 10$, $\mu = 1$, $k_0 = 1$, $k_1 = 10$, $\beta = 10$, and $\gamma = 50$. 
The source functions $\boldsymbol{f}$ and $g$ are chosen such that the manufactured solutions are
\begin{align*}
\boldsymbol{u}(x,y) = \boldsymbol{curl}\Big(x\, y^{2} (1-x-y)^{2}
\Big(
1 - x - \frac{e^{-50x} - e^{-50}}{1 - e^{-50}}
\Big)\Big),
\qquad
p(x,y) = \frac{\cos(2 \pi y)}{1024},
\qquad
\psi(x,y) = \frac{e^x + e^y}{1024}.
\end{align*}

Since boundary layers are present in this problem, uniform mesh refinement may lead to slow convergence. 
To address this issue, we apply an adaptive mesh refinement strategy that focuses on regions where boundary layers occur. 
The adaptively refined meshes shown in \Cref{fig6} illustrate how the refinement is concentrated in these critical areas.

As the mesh is refined adaptively, we observe that the convergence rates improve significantly. 
Optimal convergence behavior appears once the mesh becomes sufficiently fine, as shown in \Cref{fig7}(a) and \Cref{fig7}(b). 
The effectivity index is also shown in \Cref{fig7}(c).
The improved convergence rates indicate that the adaptive method successfully detects and resolves regions with large errors. 
Finally, \Cref{fig8} presents detailed plots of the numerical solution.

}\end{example}

\begin{figure}[!htbp]
    \begin{subfigure}[b]{0.3\textwidth}
        \centering
        \includegraphics[width=\textwidth]{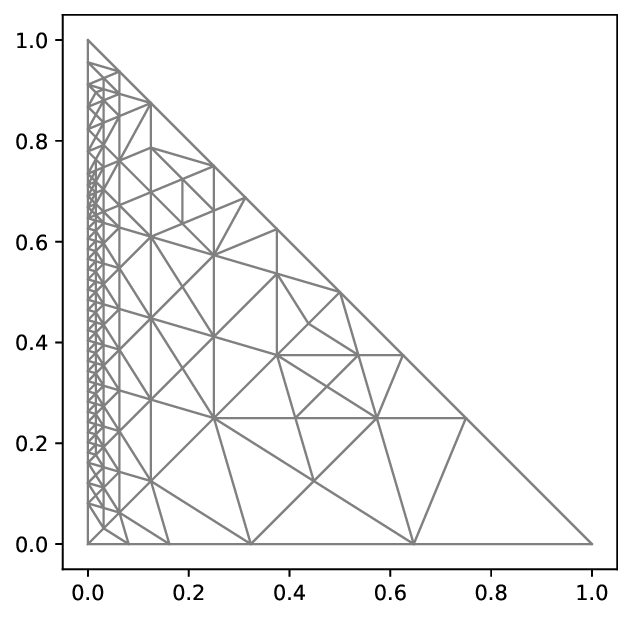}
         \caption{Dofs - $1312$}
    \end{subfigure}
    \hfill
    \begin{subfigure}[b]{0.3\textwidth}
        \centering
        \includegraphics[width=\textwidth]{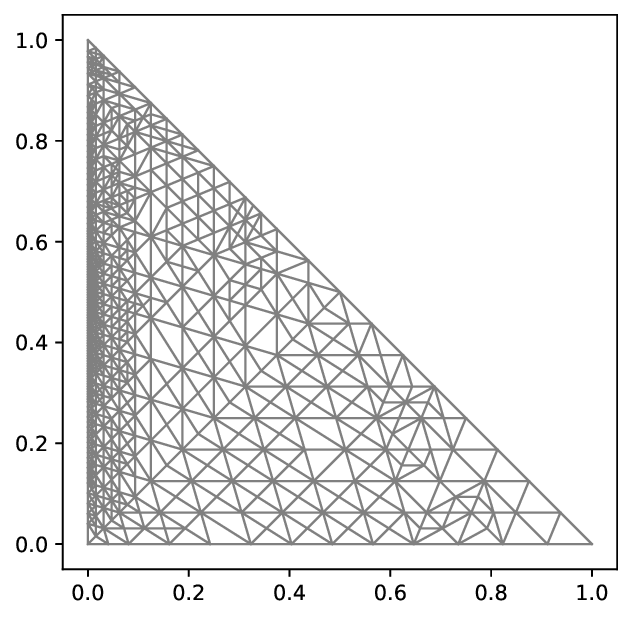}
        \caption{Dofs - $6409$}
    \end{subfigure}
    \hfill
    \begin{subfigure}[b]{0.3\textwidth}
        \centering
        \includegraphics[width=\textwidth]{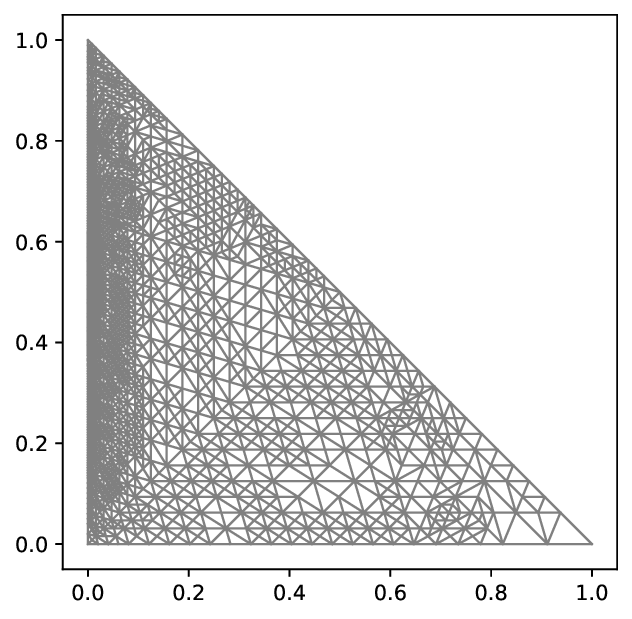}
        \caption{Dofs - $21671$}
    \end{subfigure}
    \caption{\Cref{ex3}: The refined mesh obtained by using the adaptive strategy.}\label{fig6}
\end{figure}

\begin{figure}[!htbp]
    \centering
    \begin{subfigure}[b]{0.3\textwidth}
        \centering
        \includegraphics[width=\textwidth]{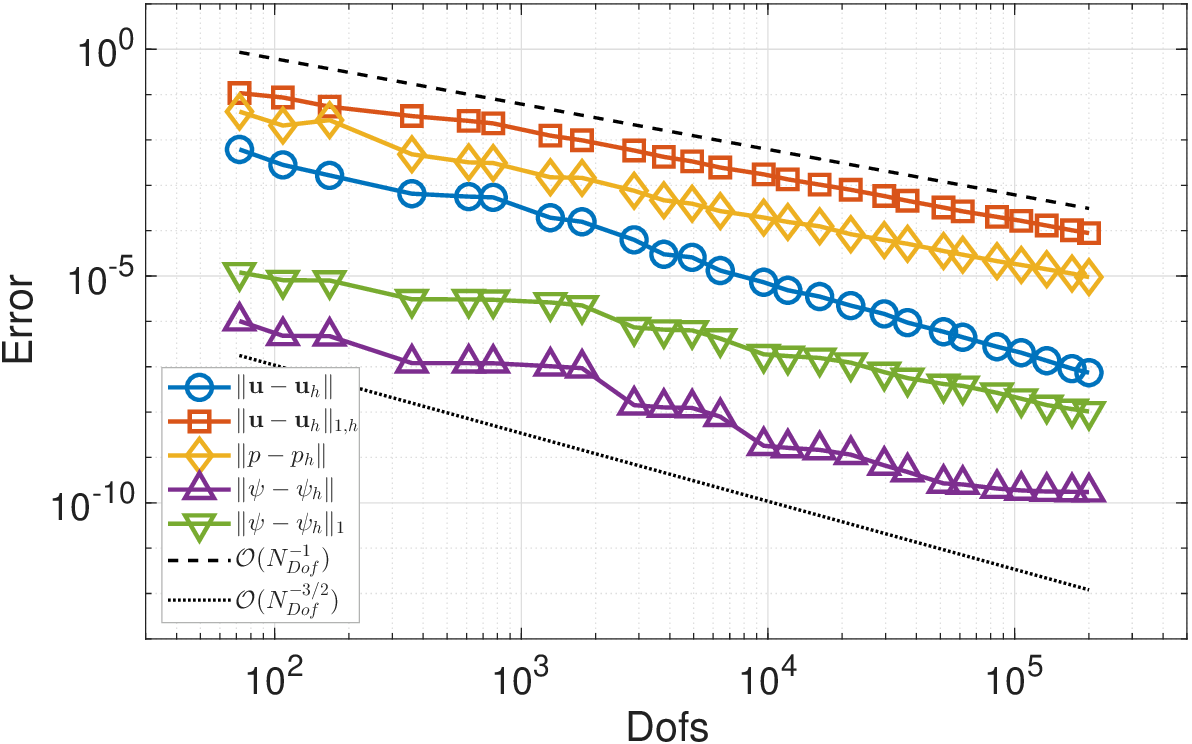}
        \caption{}
    \end{subfigure}
    \begin{subfigure}[b]{0.3\textwidth}
        \centering
        \includegraphics[width=\textwidth]{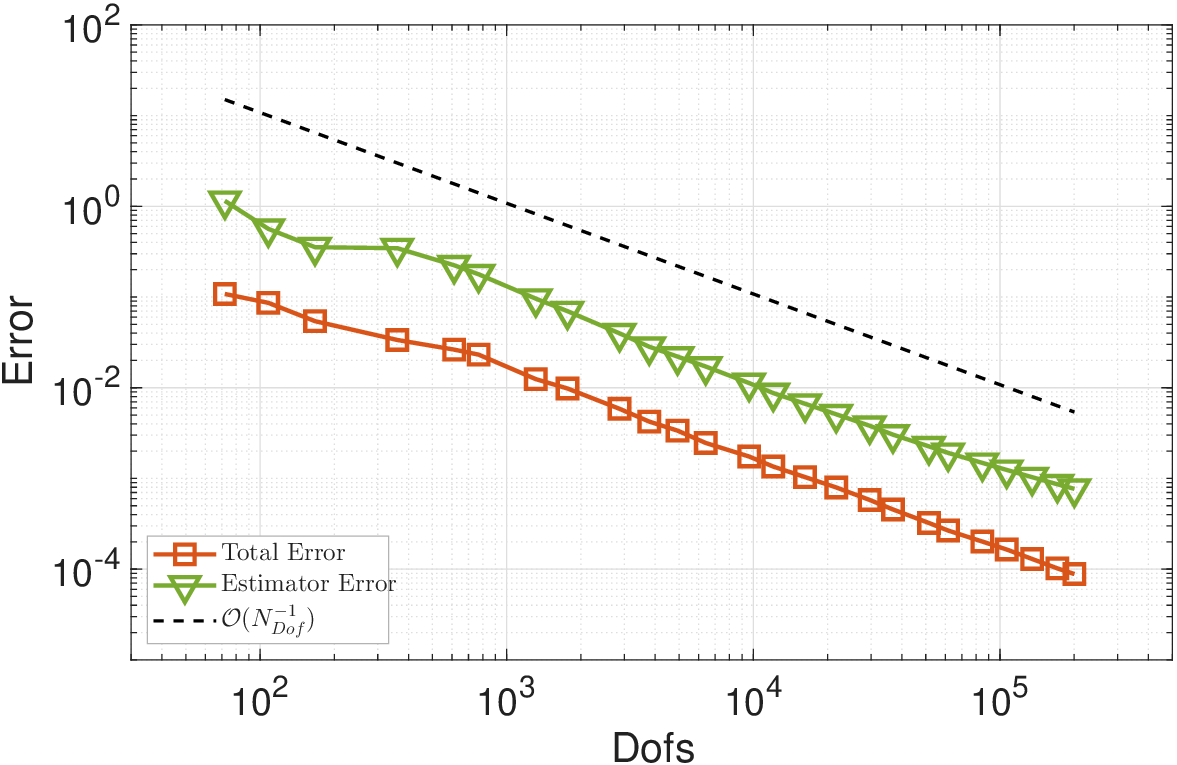}
        \caption{}
    \end{subfigure}
    \begin{subfigure}[b]{0.3\textwidth}
        \centering
        \includegraphics[width=\textwidth]{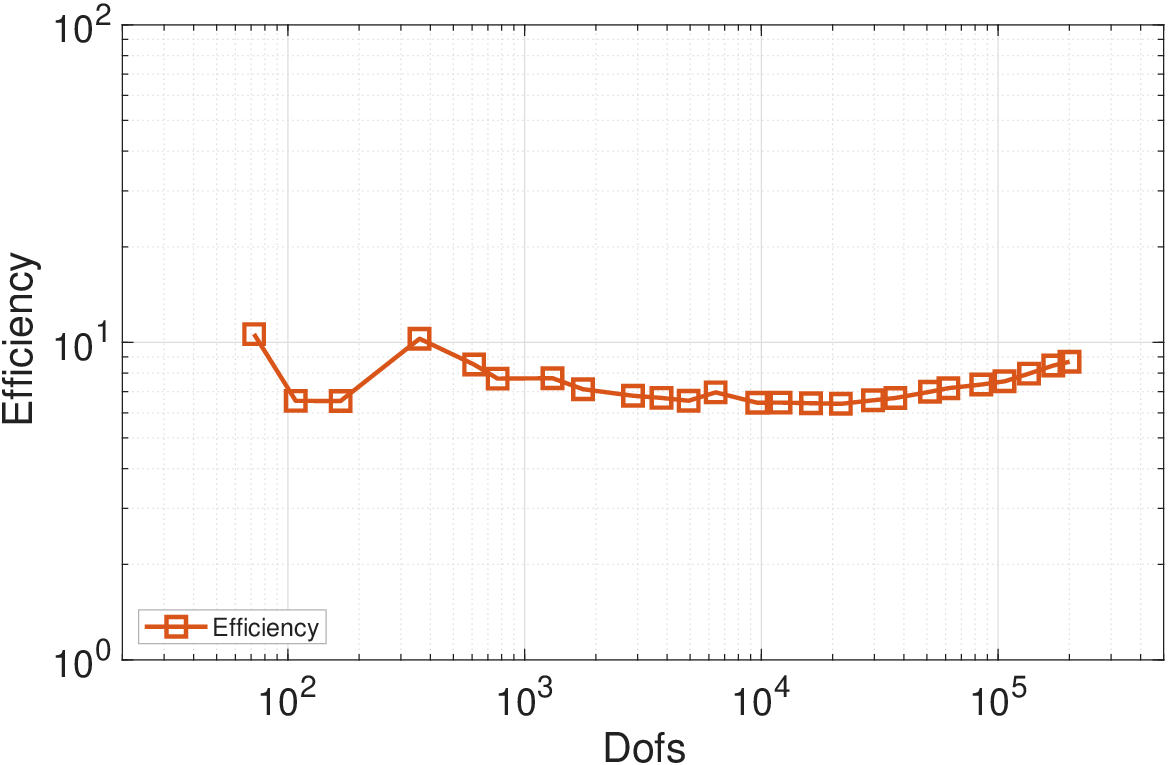}
        \caption{}
    \end{subfigure}
    \caption{\Cref{ex3}: Convergence plot in \ref{fig7}(a) and \ref{fig7}(b) and Efficiency plot in \ref{fig7}(c).}\label{fig7}
\end{figure}

\begin{figure}[!htbp]
    \centering
    \begin{subfigure}[b]{0.3\textwidth}
        \centering
        \includegraphics[width=\textwidth]{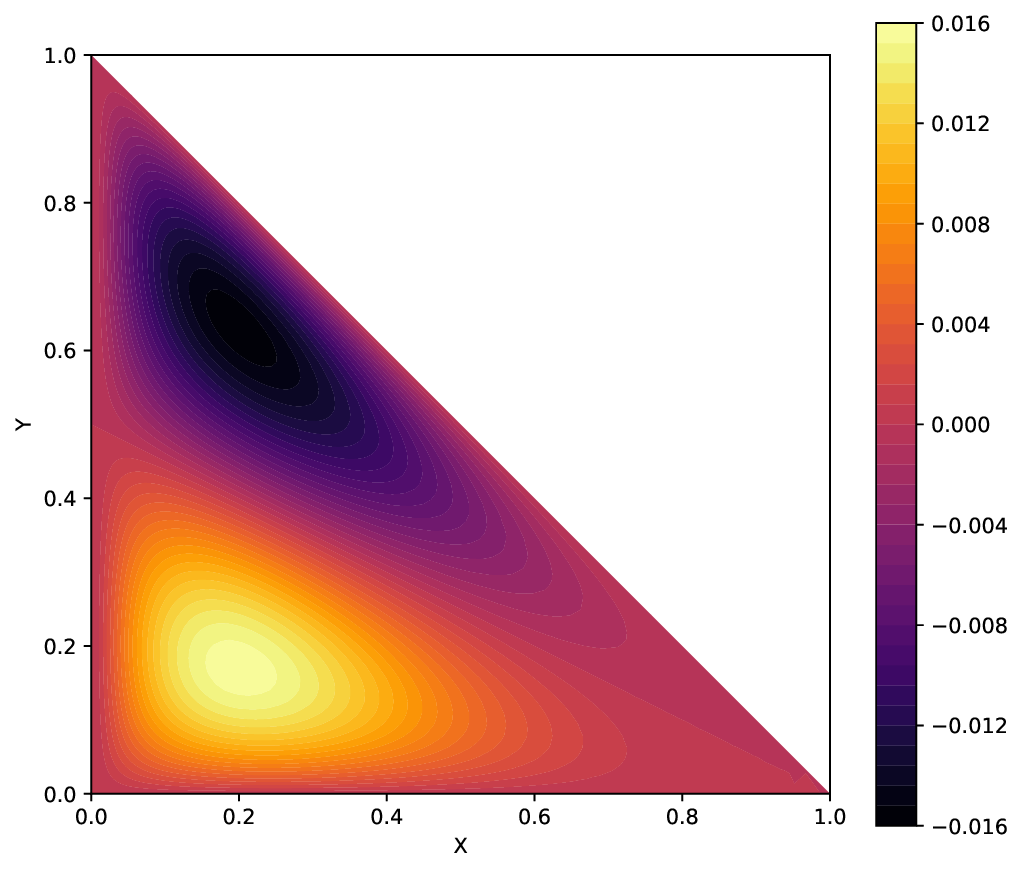}
        \caption{$u_{1_h}$}
    \end{subfigure}
    \begin{subfigure}[b]{0.3\textwidth}
        \centering
        \includegraphics[width=\textwidth]{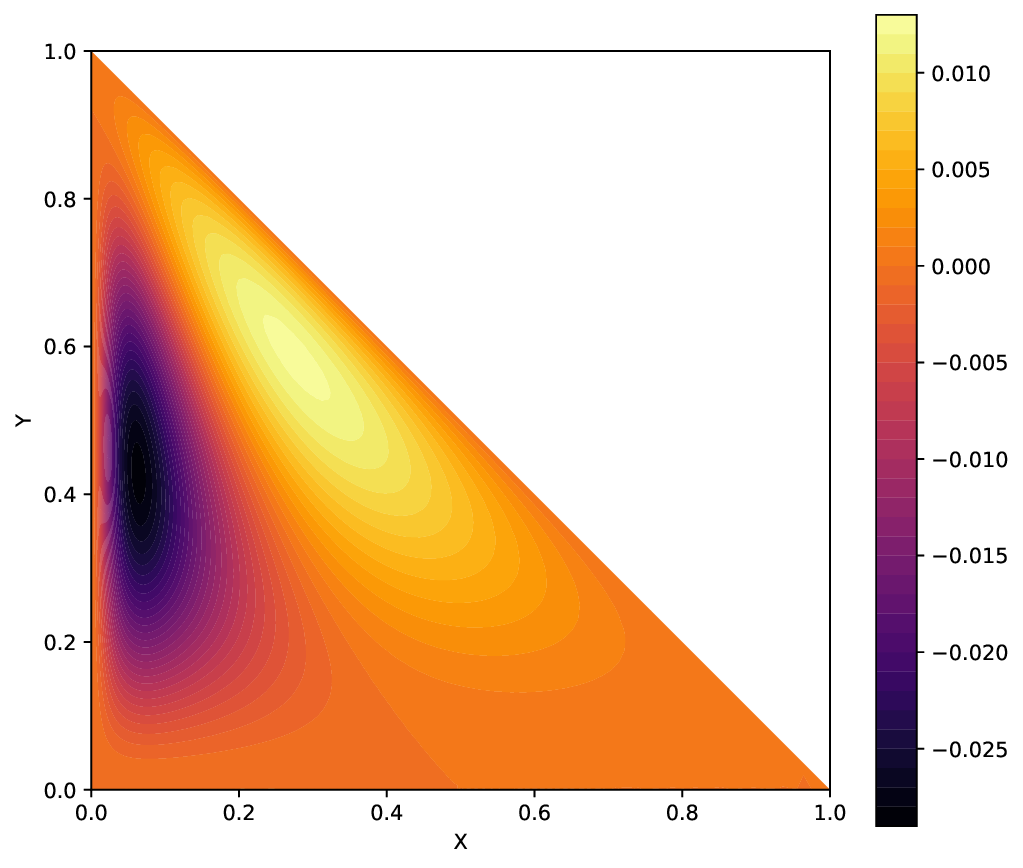}
        \caption{$u_{2_h}$}
    \end{subfigure}
    \begin{subfigure}[b]{0.3\textwidth}
        \centering
        \includegraphics[width=\textwidth]{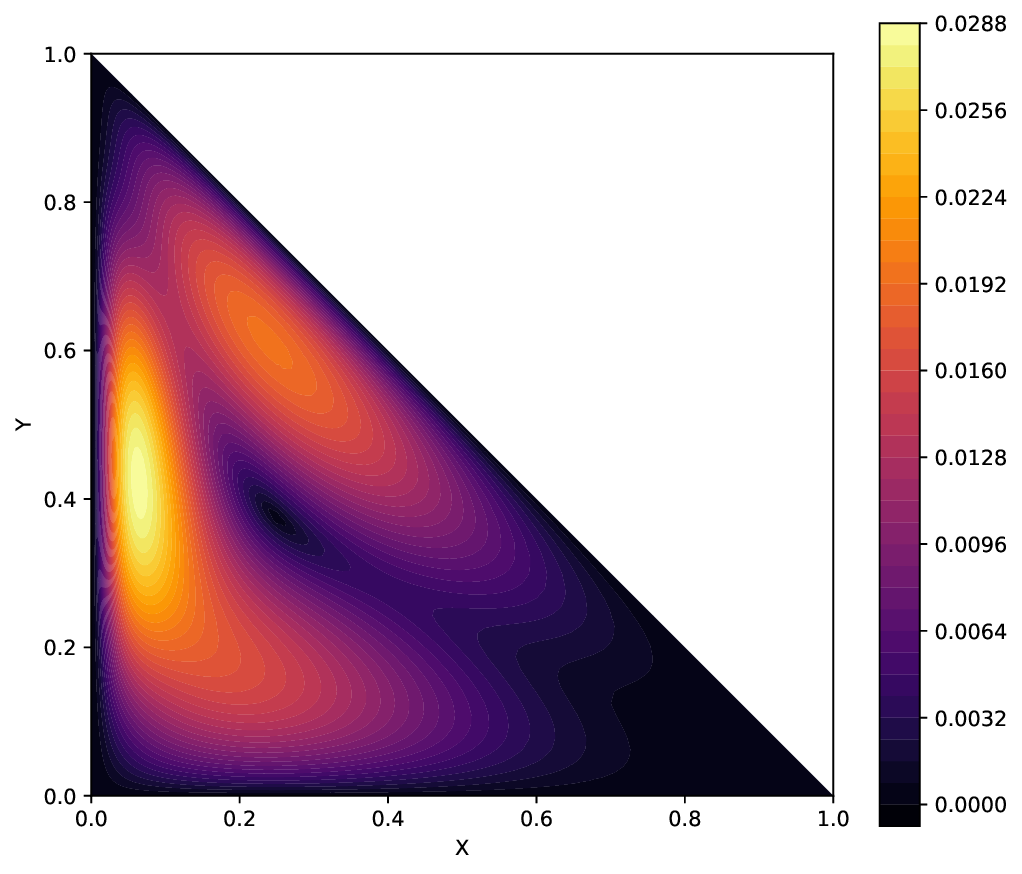}
        \caption{$|u_{h}|$}
    \end{subfigure}
    \centering
    \begin{subfigure}[b]{0.3\textwidth}
        \centering
        \includegraphics[width=\textwidth]{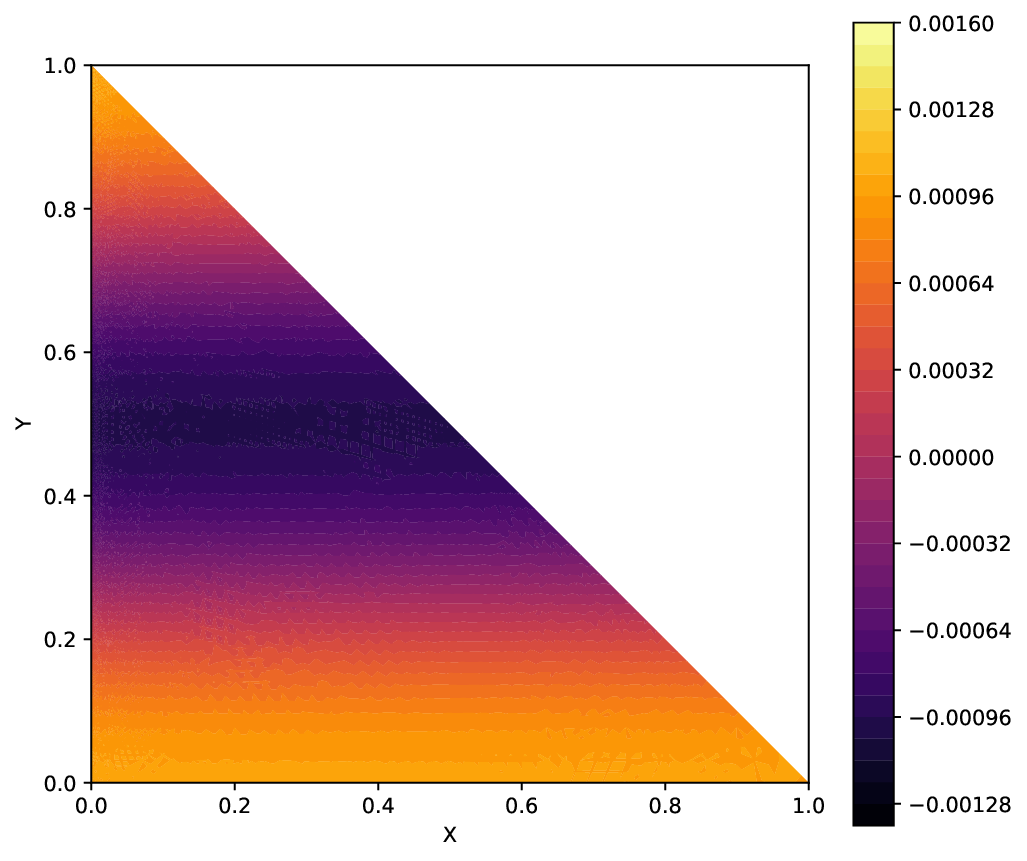}
        \caption{$p_h$}
    \end{subfigure}
    \begin{subfigure}[b]{0.3\textwidth}
        \centering
        \includegraphics[width=\textwidth]{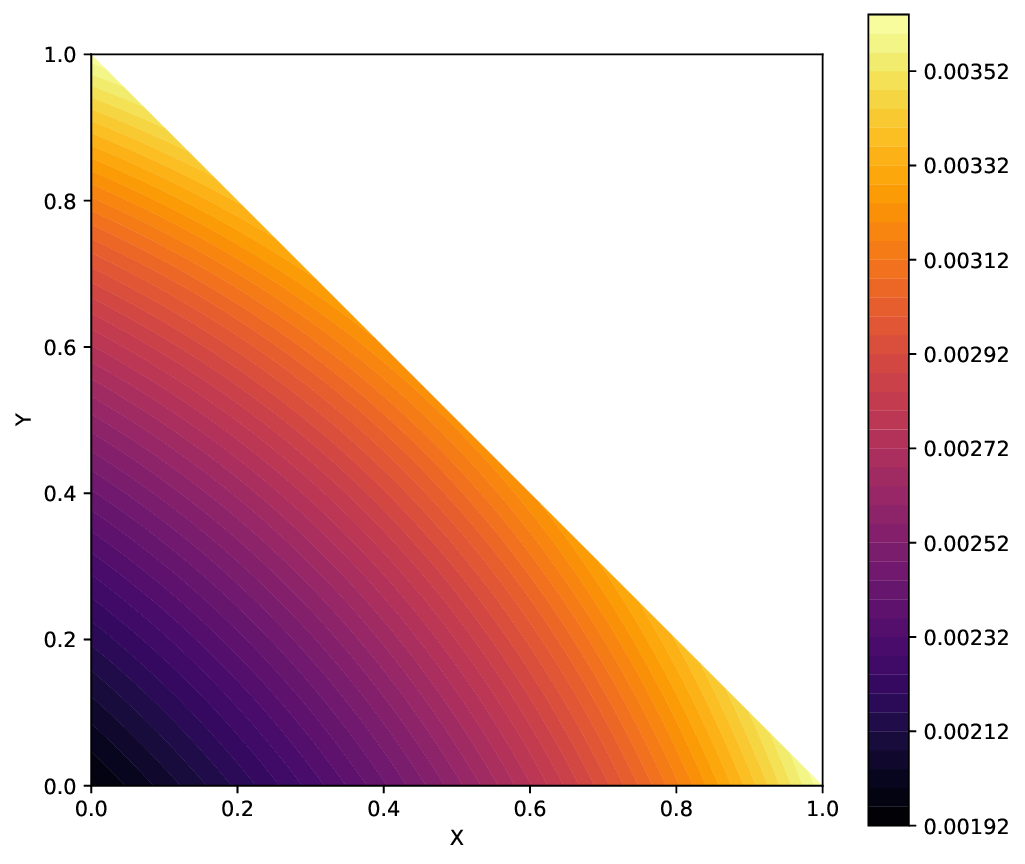}
        \caption{$\psi_h$}
    \end{subfigure}
    \caption{\Cref{ex3}: Plots of numerical solutions of the velocity $\boldsymbol{u}_h\! =\!(u_{1_h}, u_{2_h})$, pressure $p_h$ and potential $\psi_h$.}\label{fig8}
\end{figure}
\medskip
\begin{example}[\textbf{Flow past through a rectangular pipe with a circular hole}]\label{ex4}
    {\normalfont Consider a more realistic problem in which we study the flow inside a rectangular pipe $(0, 2.2) \times (0, 0.41)$ containing an obstruction (a circular hole) centered at $(0.2, 0.2)$ with radius $0.1$. The coefficients are chosen as $\mu = 1$, $\varepsilon = 1.0$, $\boldsymbol{E} = (-1, 0.0)^{t}$, $k_0 = 1$, $k_1 = 1$, $\beta = 1$, and $\gamma = 50$. We prescribe the inflow condition
\begin{align*}
\boldsymbol{u}_D = \Big(\frac{4y(0.41-y)}{0.41^2}, 0\Big), \qquad \psi_D = \cos(\pi x + \pi y).
\end{align*}
On the walls, we impose a zero boundary condition for $\boldsymbol{u}$, while a Navier boundary condition is applied on the circular boundary. on the outlet, we impose do nothing boundary condition.

The function $\psi$ is taken to be zero on all other boundaries. Since the exact solution for this problem is not known, we focus on the behavior of the numerical solution.

We begin with an initial mesh consisting of $1114$ elements and apply an adaptive refinement strategy. This adaptive refinement efficiently improves the quality of the numerical solution by concentrating the mesh in regions where higher resolution is needed. The numerical solutions obtained using this approach are shown in \Cref{fig9}, which illustrate the improved accuracy and resolution achieved through adaptive refinement.
    }
\end{example}
\begin{figure}
\centering
    \begin{subfigure}[b]{\textwidth}
        \centering
        \includegraphics[width=\textwidth]{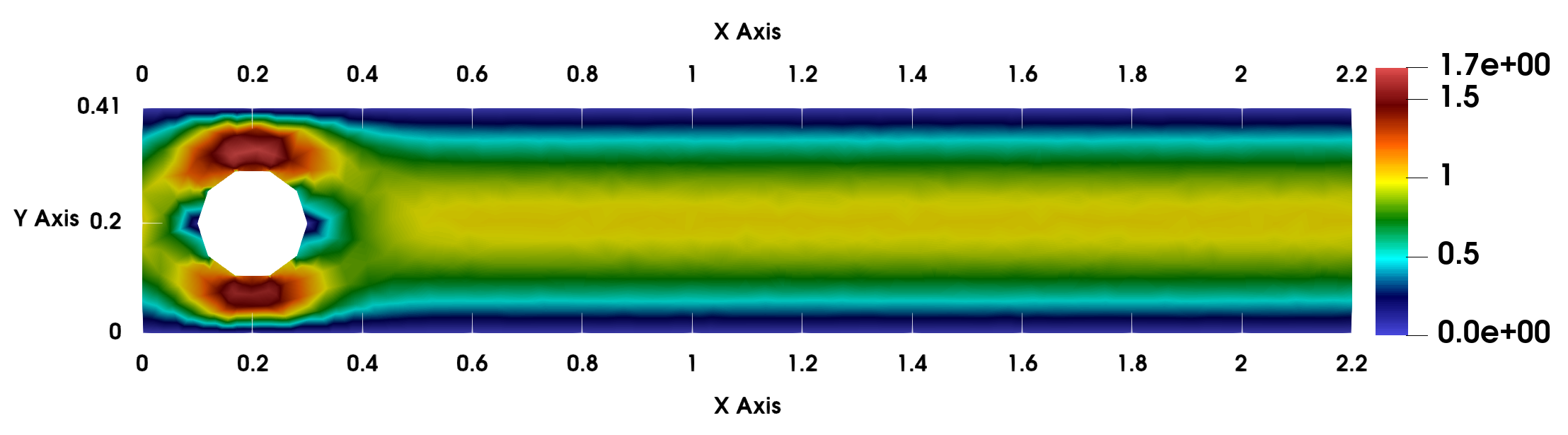}
        \caption{Velocity}
    \end{subfigure}
    \begin{subfigure}[b]{\textwidth}
        \centering
        \includegraphics[width=\textwidth]{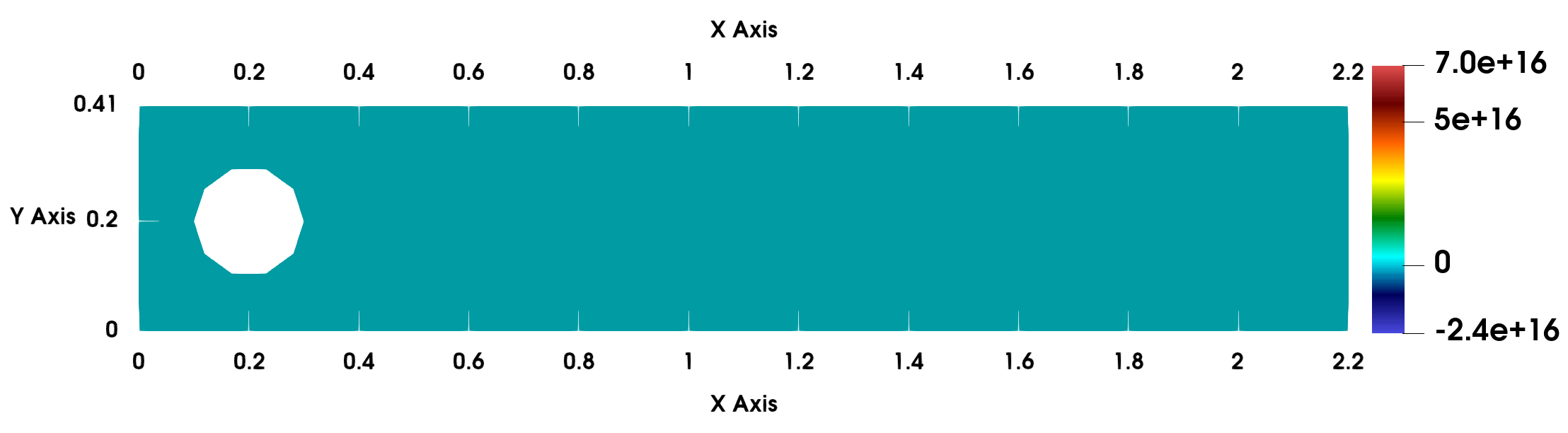}
        \caption{Pressure}
    \end{subfigure}
    \begin{subfigure}[b]{\textwidth}
        \centering
        \includegraphics[width=\textwidth]{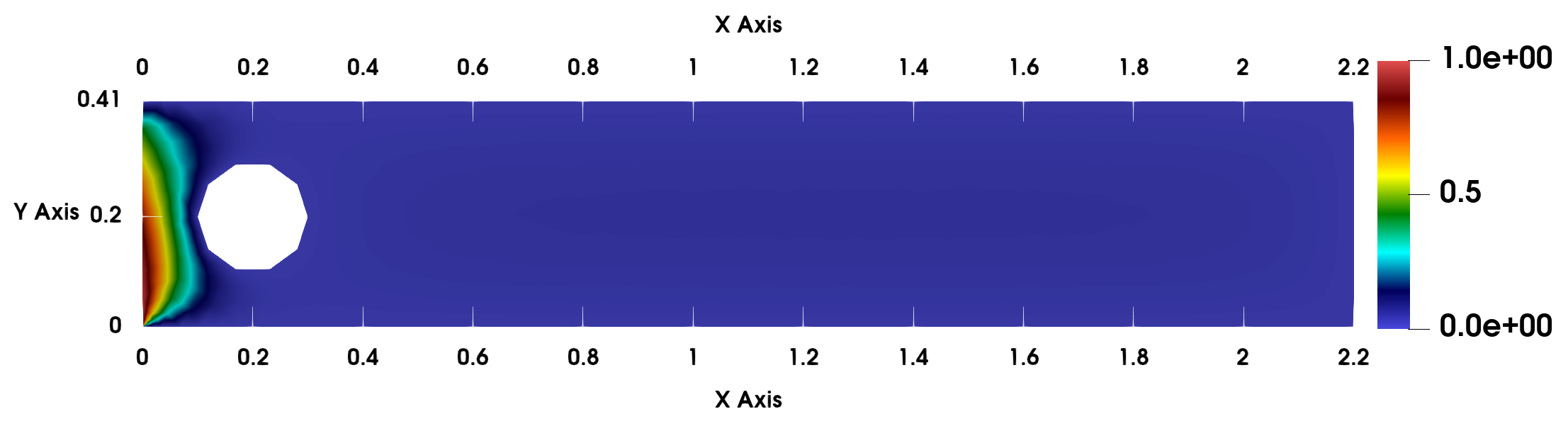}
        \caption{Potential}
    \end{subfigure}
    \caption{\Cref{ex4}: Plots of numerical solutions of the velocity $\boldsymbol{u}_h$, pressure $p_h$ and potential $\psi_h$.}\label{fig9}
\end{figure}

\section{Conclusion}\label{sec:conc}

In this work, we studied the Stokes-Poisson-Boltzmann (SPB) equations with slip boundary conditions on a Lipschitz domain under minimal regularity assumptions. 
A symmetric variant of Nitsche's method was employed to weakly impose the slip boundary conditions.
Under suitable small data assumptions, we established the existence and uniqueness of the discrete solution using Banach's fixed-point theorem combined with the Babu\v{s}ka-Brezzi theory and the Minty-Browder theorem.
In addition, we derived optimal \emph{a priori} error estimates for the proposed finite element scheme and proved the optimal convergence of the method in the energy norm. 
Furthermore, a residual-based \emph{a posteriori} error estimator is developed and analyzed, with its reliability and efficiency rigorously established.
Finally, several numerical experiments were presented to confirm the
theoretical convergence rates and to demonstrate the robustness and
accuracy of the proposed approach.
\vspace{0.5cm}
\newline 
\noindent
\section*{Declarations}
\begin{itemize}
\item \textbf{Ethical Approval:} Not applicable.
\item \textbf{Availability of supporting data:} Data will be made available on request.
\item \textbf{Competing interests:} The authors declare no potential conflict of interest.
\item \textbf{Funding:} The first author is grateful for the financial support to conduct his research, provided by ``The Ministry of Education, Government of India (Prime Minister Research Fellowship, PMRF ID: 2802469)''.
\item \textbf{Acknowledgments:} We acknowledge the funding agency.
\end{itemize}

\bibliography{references}  
\bibliographystyle{unsrt}
\end{document}